\newtheorem{theorem}{Theorem}
\newtheorem{lemma}{Lemma}
\newtheorem{proposition}{Proposition}
\newtheorem{corollary}{Corollary}
\newtheorem{definition}{Definition}
\newtheorem{proof}{Proof}
\newcommand*{\cd}{(\cdot)}
\newcommand*{\wx}{\widehat x}
\newcommand*{\la}{\langle}
\newcommand*{\ra}{\rangle}
\newcommand*{\wu}{\widehat u}
\newcommand*{\wou}{\widehat{\overline u}}
\newcommand*{\woa}{\widehat{\overline \alpha}}
\newcommand*{\ov}{\overline}
\newcommand*{\ws}{\widehat \sigma}
\newcommand*{\wy}{\widehat y}
\newcommand*{\wo}{\mathcal O}
\newcommand*{\ww}{\widehat w}
\newcommand*{\wF}{\widehat F}
\let\le=\leqslant
\let\ge=\geqslant
\def\bad{\spaceskip=0.33emplus0.6emminus0.15em\immediate\write5{\string\bad}}
\title{Local infimum in optimal control}
\author{E.~R.~Avakov, G.~G.~Magaril-Il'yaev}
\date{}
\begin{document}

\maketitle

\begin{abstract}
The concept of a local infimum for an optimal control problem is introduced. This definition
extends that of an optimal process. For a~local infimum we prove an existence theorem
and derive necessary conditions that resemble some family of ``maximum principles''.
Examples are given to demostrate the meaningfulness of the
necessary conditions obtained in the present paper, which extend and strengthen the classical results in this field.
\end{abstract}

\section*{Introduction}

By the Pontryagin maximum principle for an optimal control problem, one means, as is well known
(see \cite{P}), necessary conditions for optimality of a~process\,---\,this being a~pair
consisting of an optimal control and the corresponding optimal (phase)
trajectory. In the present paper, we introduce the concept of a~local infimum, which extends
that of an optimal trajectory. For a~local infimum, necessary conditions
which resemble some family of ``maximum principles'' are proved. If a~local infimum is, in particular,
an optimal trajectory, then this family contains the classical Pontryagin maximum principle,
as well as some other relations, which in general are capable of providing additional information
and thereby, as is shown by examples, strengthen the Pontryagin maximum principle.

If a local infimum is not an optimal trajectory, then these necessary conditions provide a~tool for
finding trajectories  ``suspicious'' for a~local
infimum. The use of this machinery is to a large extent the same as that of the Pontryagin maximum principle for
finding processes which are suspicious for optimality.

In the present paper, we employ the idea of ``convexification'' of the original control problem. This
idea can be found in the book of Gamkrelidze~\cite{Gam}. We also use this idea, but in
a~less general setting, which, however, is quite sufficient for our purposes.

We also put forward an existence theorem for a local infimum in an
optimal control problem. This result does not involve any convexity-type assumptions.
So, the class of problems in which there exists a~local infimum turns  out to be considerably wider than the
class of problems in which one can guarantee the existence of an optimal process, because the existence
of the latter depends on fairly inconvenient assumptions on the convexity of some family of
sets associated with the problem.

The paper consists of three sections. In the first section we formulate and prove the main
results. In the second section we give examples illustrating the results obtained.
In the third section (the Appendix) we prove a~generalized
implicit function theorem and derive four lemmas: the inverse function lemma, the lemma on equation in variations,
and two approximation lemmas.
All these results, which in our opinion are of independent interest, are chief ingredients in the
proofs of the main results of the paper.

\smallskip

The authors are deeply grateful to Revaz Valer'yanovich Gamkrelidze for
his kind attention and useful discussions.

\section*{Main results and proofs}

Let $U$ be a nonempty subset of $\mathbb R^r$. Assume that we are given
a~mapping  $\varphi\colon
\mathbb R\times\mathbb R^n\times\mathbb R^r\to\mathbb R^n$ of variables
$t$, $x$ and~$u$, a~function $f_0\colon\mathbb R^n\times\mathbb R^n\to \mathbb R$, and mappings
$f\colon\mathbb R^n\times\mathbb R^n\to \mathbb R^{m_1}$ and $g\colon\mathbb
R^n\times\mathbb R^n\to \mathbb R^{m_2}$ of variables $\zeta_i\in\mathbb R^n$, $i=1,2$.

Consider the following optimal control problem
\begin{align}
& f_0(x(t_0),x(t_1))\to\inf, \label{1}\\[3pt]
&\dot x=\varphi(t,x,u(t)),\quad u(t)\in U\,\,\text{for almost all}\,\,\, t\in[t_0,t_1],\label{2}\\[3pt]
& f(x(t_0),x(t_1))\le0,\quad g(x(t_0),x(t_1))=0\label{3}.
\end{align}

It what follows it will be assumed that the {\it mapping $\varphi$ is continuous together with its
derivative with respect to~$x$ on $\mathbb R\times\mathbb R^n\times \mathbb R^r$ and the mappings
$f_0$, $f$ and $g$ are continuously differentiable on $\mathbb R^n\times\mathbb R^n$}.

In connection with the definition of a local infimum,
it will be convenient to rephrase the standard definitions of an admissible process and
an optimal processes with emphasis on the concept of a~trajectory
and when the control plays a~secondary role.

\begin{definition}\label{d1}\rm
A trajectory $x\cd\in AC([t_0,t_1],\mathbb R^n)$ is called \textit{admissible} for the control system
\eqref{2},\,\eqref{3} if there exists a~control $u\cd\in
L_\infty([t_0,t_1],\mathbb R^r)$ satisfying conditions \eqref{2},\,\eqref{3}.
\end{definition}

The set of admissible trajectories for the control system  \eqref{2},\,\eqref{3} (in what follows,
for brevity, we shall drop the word ``control'') will be always considered as
a~subset of $C([t_0,t_1],\mathbb R^n)$.

\begin{definition}\label{d2}\rm
An admissible trajectory $\wx\cd$ is called an optimal trajectory for problem
\eqref{1}--\eqref{3} if it delivers a~local minimum to the functional $f_0$ on
the set of admissible trajectories.
\end{definition}

The following definition is the main definition  in the present paper.

\begin{definition}\label{d3}\rm
A function $\wx\cd\in C([t_0,t_1],\mathbb R^n)$ is called a~\textit{local infimum} for problem
\eqref{1}--\eqref{3} if it delivers a~local minimum for the functional $f_0$  on the closure of the set
of admissible trajectories.

If a minimum is global, then we speak about the global infimum.
\end{definition}

It is clear that the value of $f_0$ on a global infimum coincides with the infimum of $f_0$ over
all admissible trajectories.

It is easily seen that if $\wx\cd$ is an optimal trajectory for problem \eqref{1}--\eqref{3},
then $\wx\cd$ is a~local infimum for this problem. On the other hand, if a~function $\wx\cd$
is a~local infimum for problem \eqref{1}--\eqref{3} and is admissible, then $\wx\cd$ is an
optimal trajectory for this problem.

The above definition of local infimum is not only formally more general than that
of an optimal trajectory, but it also has an advantage over the definition of an optimal trajectory
in that the class of optimal control problems in which the existence of a~local infimum can be guaranteed
is considerably larger than the class of problems in which one secures the existence of an optimal
trajectory, because in the former case it is not required to satisfy the cumbersome condition that the
set
\begin{equation}\label{4}
\varphi(t,x,U)=\{\,\varphi(t,x,u)\in\mathbb R^n : u\in U\,\}
\end{equation}
be convex for all $t\in[t_0,t_1]$ and $x\in\mathbb R^n$.

The existence theorem will be proved for the following particular case of problem
\eqref{1}--\eqref{3}:
\begin{align}
& f_0(x(t_1))\to\inf,\label{5} \\[3pt]
&\dot x=\varphi(t,x,u(t)),\quad u(t)\in U\,\,\text{for almost all}\,\,\, t\in[t_0,t_1],\label{6}\\[3pt]
& x(t_0)=x_0,\quad g(x(t_1))=0;\label{7}
\end{align}
here $x_0\in\mathbb R^n$, $f_0\colon\mathbb R^n\to\mathbb R$ and $g\colon \mathbb R^n\to\mathbb R^m$.

\begin{theorem}\label{T1}
Assume that in problem \eqref{5}--\eqref{7} the set~$U$ is compact,
the set of admissible trajectories is nonempty, and there exists a~constant $K>0$ such that
\begin{equation}\label{8}
|(x,\varphi(t,x,u))|\le K(|x|^2+1)
\end{equation}
for all $t\in[t_0,t_1]$, $x\in\mathbb R^n$ and $u\in U$. Then problem \eqref{5}--\eqref{7}
has a~global infimum.
\end{theorem}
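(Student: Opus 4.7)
The plan is to show that the set of admissible trajectories is relatively compact in $C([t_0,t_1],\mathbb R^n)$, so that its closure is a nonempty compact set on which the continuous functional $x\cd\mapsto f_0(x(t_1))$ attains its minimum.

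The first step is a uniform a priori bound. For any admissible trajectory $x\cd$ with control $u\cd$, the growth condition \eqref{8} gives
\begin{equation*}
\frac{d}{dt}|x(t)|^2 = 2(x(t),\varphi(t,x(t),u(t))) \le 2K(|x(t)|^2+1)
\end{equation*}
almost everywhere. With $x(t_0)=x_0$ fixed, Gronwall's inequality (applied to $y(t)=|x(t)|^2+1$) yields $|x(t)|^2+1\le(|x_0|^2+1)e^{2K(t_1-t_0)}$, so there exists $R>0$ with $|x(t)|\le R$ for every admissible trajectory and every $t\in[t_0,t_1]$.

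The second step exploits compactness of $U$. Since $\varphi$ is continuous on the compact set $[t_0,t_1]\times\overline B_R\times U$, there exists $M>0$ such that $|\varphi(t,x,u)|\le M$ there. Hence every admissible trajectory is $M$-Lipschitz, so the family of admissible trajectories is equicontinuous and uniformly bounded. By the Arzel\`a--Ascoli theorem, this family is relatively compact in $C([t_0,t_1],\mathbb R^n)$, and so its closure $\overline{\mathcal A}$ is a nonempty compact subset of $C([t_0,t_1],\mathbb R^n)$.

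The third step is the conclusion. The functional $x\cd\mapsto f_0(x(t_1))$ is continuous on $C([t_0,t_1],\mathbb R^n)$ because $f_0$ is continuously differentiable (hence continuous) on $\mathbb R^n$ and point evaluation at $t_1$ is continuous in the sup norm. By the Weierstrass theorem, this functional attains its minimum on the compact set $\overline{\mathcal A}$ at some $\wx\cd\in\overline{\mathcal A}$, which is by definition a global infimum for problem \eqref{5}--\eqref{7}.

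No step is really a serious obstacle, since all the ingredients are classical; the only place where one must be a bit careful is recognising that the sought object lives in the closure of the set of admissible trajectories rather than inside it, so no convexity of $\varphi(t,x,U)$ or closedness property of the attainable set is needed\,---\,this is precisely what makes Definition~\ref{d3} easier to realise than Definition~\ref{d2}.
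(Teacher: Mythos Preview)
Your proof is correct and is in fact more direct than the paper's. The paper first passes to the convexified system
\[
\dot x=\sum_{i=1}^{n+1}\alpha_i\varphi(t,x,u_i),\qquad (u_1,\ldots,u_{n+1},\alpha_1,\ldots,\alpha_{n+1})\in U^{n+1}\times\Sigma^{n+1},
\]
observes that for this system the velocity set is convex (it is the convex hull of $\varphi(t,x,U)$, by Carath\'eodory), and then invokes Filippov's theorem to conclude that the admissible set of the convexified system is compact; a minimizing sequence for the original problem, being admissible for the convexified one, then has a convergent subsequence whose limit lies in the closure of the original admissible set. You bypass Filippov and the convexification entirely: the a~priori bound from~\eqref{8} together with compactness of $U$ gives a uniform Lipschitz constant for all admissible trajectories of the \emph{original} system, and Arzel\`a--Ascoli yields relative compactness directly. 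For the statement of Theorem~\ref{T1} alone your argument is shorter and uses only classical tools; the paper's detour, on the other hand, already sets up the convexified system and the appeal to Filippov that are reused verbatim in the last assertion of Theorem~\ref{T2} (that a local infimum is admissible for the convex system with $k=n+1$), so its extra machinery is not wasted in the larger context.
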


Note that if the conditions of this theorem are augmented with the additional condition
that the set
\eqref{4} be convex for all $t\in[t_0,t_1]$ and $x\in\mathbb R^n$, then we get by Filippov's theorem~\cite{F}
the conditions for the existence of an optimal trajectory for problem \eqref{5}--\eqref{7}.

\begin{proof} \rm
We set $\Sigma^{n+1}=\{\,\ov\alpha=(\alpha_1,\ldots,\alpha_{n+1})\in\mathbb R^{n+1}_+
: \sum_{i=1}^{n+1}\alpha_i=1\,\}$ and define $f(t,x,\ov
u)=\!\sum_{i=1}^{n+1}\alpha_i\varphi(t,x,u_i)$, where $\ov u=\!(u_1,\ldots,
u_{n+1},\alpha_1,\ldots,\alpha_{n+1})\in Q=U^{n+1}\times\Sigma^{n+1}$. Consider the  problem
\begin{align}
& f_0(x(t_1))\to\inf, \label{9}\\[3pt]
&\dot x=f(t,x,\ov u(t)),\quad \ov u(t)\in Q\,\,\,\text{for almost all}\,\,\, t\in[t_0,t_1],\label{10}\\[3pt]
& x(t_0)=x_0,\quad g(x(t_1))=0\label{11}.
\end{align}

It is clear that the set $Q$ is compact and that any admissible trajectory for system
\eqref{6},\,\eqref{7} is admissible also for system  \eqref{10},\,\eqref{11}.
From \eqref{8} and from the form of the function~$f$ one easily finds that
\begin{equation*}
|(x, f(t,x,\ov u))|\le K(|x|^2+1)
\end{equation*}
for all $t\in[t_0,t_1]$, $x\in\mathbb R^n$ and $\ov u\in Q$.

We next note that for each $t\in[t_0,t_1]$ and $x\in\mathbb R^n$ the set
\begin{equation*}
R(t,x)=\{\, f(t,x,\ov u)=\sum_{i=1}^{n+1}\alpha_i\varphi(t,x,u_i)\in\mathbb R^n : \ov u\in Q\,\}
\end{equation*}
is convex, because, on the one hand, $R(t,x)$ is clearly contained in the convex hull of $\varphi(t,x, U)$,
and on the other hand, by Carath\'eodory's theorem, any element from this convex hull
can be represented as a~convex combination of at most $n+1$ elements from $\varphi(t,x, U)$, and
hence, $R(t,x)$ coincides with the convex hull of $\varphi(t,x, U)$.

So, all the hypotheses of A.\,F.~Filippov's theorem are satisfied. This result states in essence that
the set of admissible trajectories for system \eqref{10},\,\eqref{11} is compact. We let~$\gamma$
denote the infimum of the numbers $f_0(x(t_1))$ over all
admissible trajectories~$x\cd$ for system \eqref{6},\,\eqref{7}. These trajectories
are clearly also admissible for system \eqref{10},\,\eqref{11}, and hence $\gamma$~is finite.
There exists a~sequence $x_N\cd$ of these trajectories such that the sequence
$f_0(x_N(t_1))$ converges to~$\gamma$. Hence by compactness it can be assumed that this sequence
converges to some function $\wx\cd\in C([t_0,t_1],\mathbb R^n)$, and therefore,  $\wx\cd$
lies in the closure of the set of all admissible trajectories for system
\eqref{6},\,\eqref{7}. Next, since $f_0$ is continuous, we have
$f_0(\wx(t_1))=\lim_{N\to\infty}f_0(x_N(t_1))=\gamma\le f_0(x(t_1))$ for any admissible
trajectory $x\cd$ for system \eqref{6},\,\eqref{7}. Therefore, $\wx\cd$~is a~global
infimum for problem \eqref{5}--\eqref{7}.
\end{proof}

For any $k\in\mathbb N$, we set
\begin{equation*}
\Sigma^k=\{\,\ov\alpha=(\alpha_1,\ldots,\alpha_k)\in \mathbb R_+^k : \,\,
\sum_{i=1}^k\alpha_i=1\,\}
\end{equation*}
and associate with system \eqref{2},\,\eqref{3} the control system
\begin{align}
&\dot x\! =\!\!\sum_{i=1}^k\alpha_i(t)\varphi(t,x,u_i(t)), \,\,\, \ov u(t)\in  U^k, \,\,
\,\ov\alpha(t)\in \Sigma^k \, \,
\text{a.e. on}
\,\,[t_0,t_1], \label{12} \\[3pt]
&f(x(t_0),x(t_1))\le0,\quad g(x(t_0),x(t_1))=0, \label{13}
\end{align}
where $\ov u\cd=(u_1\cd,\ldots,u_k\cd)$ and $\ov\alpha\cd=(\alpha_1\cd,\ldots,\alpha_k\cd)$.
This system will be called a~{\it convex extension} ({\it relaxation}) of system
\eqref{2},\,\eqref{3} (or simply a~{\it convex system}).

As before, a trajectory $x\cd\in AC([t_0,t_1],\mathbb R^n)$ is called {\it
admissible} for a convex system \eqref{12},\,\eqref{13} if there exist $\ov
u\cd\in (L_\infty([t_0,t_1],\mathbb R^r))^k$ and $\ov\alpha\cd\in (L_\infty([t_0,t_1]))^k$
satisfying conditions \eqref{12},~\eqref{13}.

A triple $(x\cd,\ov u\cd,\ov\alpha\cd)$ will also be called {\it admissible} for
the convex system \eqref{12} \eqref{13}.

We need some more notation.
We let $\la \lambda,x\ra=\sum_{i=i}^n\lambda_ix_i$
denote a~linear functional
$\lambda=(\lambda_1,\ldots,\lambda_n)\in(\mathbb R^n)^*$ evaluated at a~point
$x=(x_1,\ldots,x_n)^T\in\mathbb R^n$ ($T$~is the transpose).
The Euclidean norm of an element $x\in\mathbb R^n$ is denoted by~$|x|$.
By $(\mathbb R^n)^*_+$ we denote the set of functionals on~$\mathbb
R^n$ which assume nonnegative values on nonnegative  vectors.
The adjoint  operator to a~linear operator
$\Lambda\colon \mathbb R^n\to\mathbb R^m$ will be denoted by~$\Lambda^*$.

If $\wx\cd$ is a fixed function, then, to alleviate the notation, the partial derivatives
of the mappings $f_0$, $f$ and~$g$ with respect to $\zeta_1$ and $\zeta_2$ at a~point $(\wx(t_0),\wx(t_1))$
will be written as $\widehat f_{0\zeta_i}$, $\widehat f_{\zeta_i}$ and $\widehat g_{\zeta_i}$,
$i=1,2$.

\begin{theorem}\label{T2}
If a function $\wx\cd\in AC([t_0,t_1],\mathbb R^n)$ is a local infimum for problem
\eqref{1}--\eqref{3}, then, for any $k\in\mathbb N$, $\wou\cd=(\wu_1\cd,\ldots,\wu_k\cd)$
and $\woa\cd=(\widehat\alpha_1\cd,\ldots,\widehat\alpha_k\cd)$, for which triple
$(\wx\cd,\wou\cd,\woa\cd)$ is admissible for the convex system \eqref{12},\,\eqref{13},
there exist a~nonzero tuple $(\lambda_0,\lambda_f,\lambda_g)\in \mathbb R_+\times(\mathbb
R^{m_1})^*_+\times(\mathbb R^{m_2})^*$ and a~vector function $p\cd\in AC([t_0,t_1],(\mathbb
R^n)^*)$ such that the following conditions hold:
\begin{itemize}
\item[$1)$] the stationarity condition  with respect to $x\cd$
$$
\dot p(t) =-p(t)\sum_{i=1}^k\widehat\alpha_i(t)\varphi_x(t,\wx(t),\wu_i(t)),
$$
\item[2)] the transversality condition
$$
p(t_0)=\lambda_0{\widehat {f}}_{0\zeta_1}+{\widehat {f}_{\zeta_1}}^*\lambda_f+{\widehat
{g}_{\zeta_1}}^*\lambda_g,\quad p(t_1)=-\lambda_0{\widehat {f}}_{0\zeta_2}-{\widehat
{f}_{\zeta_2}}^*\lambda_f-{\widehat {g}_{\zeta_2}}^*\lambda_g,
$$
\item[3)] the complementary slackness condition
$$
\la \lambda_f, f(\wx(t_0),\wx(t_1))\ra=0,
$$
\item[4)] the maximum condition for almost all $t\in[t_0,t_1]$
\begin{equation*}
\max_{u\in U}\la p(t),\varphi(t,\wx(t),u)\ra=\la p(t),\dot
{\wx}(t)\ra.
\end{equation*}
\end{itemize}

If \  $U$ is a compact set, then a local infimum for problem \eqref{1}--\eqref{3} is an
admissible trajectory for the convex system \eqref{12},\,\eqref{13} with $k=n+1$.
\end{theorem}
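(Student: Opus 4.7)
The plan is to reduce the statement to a Pontryagin-type maximum principle applied to the convex (relaxed) system \eqref{12},\,\eqref{13} at the reference triple $(\wx\cd,\wou\cd,\woa\cd)$. The essential link between the two systems is one of the approximation lemmas promised in the Appendix: every admissible trajectory of the convex system lies in the closure of the set of admissible trajectories of the original system \eqref{2},\,\eqref{3}. Combined with the definition of local infimum, this means that $\wx\cd$ automatically delivers a local minimum of $f_0$ along admissible trajectories of the convex system in a $C([t_0,t_1],\mathbb R^n)$-neighborhood of itself.

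With this reduction in place, the first step is to apply a Pontryagin-type maximum principle to the convex control system, whose right-hand side $\sum_{i=1}^k\alpha_i\varphi(t,x,u_i)$ is $C^1$ in $x$ and whose controls $(\ov u,\ov\alpha)$ take values in the compact set $Q=U^k\times\Sigma^k$. The standard needle-variation construction, carried out with the help of the generalized implicit function theorem and the equation-in-variations lemma from the Appendix, produces a nontrivial tuple $(\lambda_0,\lambda_f,\lambda_g)\in\mathbb R_+\times(\mathbb R^{m_1})^*_+\times(\mathbb R^{m_2})^*$ and an adjoint function $p\cd$ satisfying conditions 1), 2) and 3) verbatim, together with the maximum condition
\begin{equation*}
\max_{\ov w\in Q}\la p(t),\,\sum_{i=1}^k\alpha_i\varphi(t,\wx(t),u_i)\ra=\la p(t),\dot{\wx}(t)\ra
\end{equation*}
for almost every $t$. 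Since this expression is linear in $\ov\alpha\in\Sigma^k$, its maximum over $Q$ is attained by concentrating all weight on a single maximizer of $u\mapsto\la p(t),\varphi(t,\wx(t),u)\ra$ over $U$; hence it equals $\max_{u\in U}\la p(t),\varphi(t,\wx(t),u)\ra$, which is precisely condition 4).

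For the final assertion, assume $U$ is compact and take $k=n+1$. Carath\'eodory's theorem, used exactly as in the proof of Theorem~\ref{T1}, shows that the set $R(t,x)=\{\sum_{i=1}^{n+1}\alpha_i\varphi(t,x,u_i):u_i\in U,\ \ov\alpha\in\Sigma^{n+1}\}$ coincides with the convex hull of $\varphi(t,x,U)$, and is therefore convex and compact. A compactness argument in the spirit of Filippov's theorem then yields that the set of admissible trajectories for the convex system with $k=n+1$ is closed in $C([t_0,t_1],\mathbb R^n)$. Since every trajectory admissible for the original system is trivially admissible for this convex system (take $\alpha_1\equiv1$), the closure of the set of admissible trajectories of the original system is contained in the set of admissible trajectories of the convex system with $k=n+1$; in particular the local infimum $\wx\cd$, lying in the former closure by definition, is admissible for the latter.

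The principal obstacle is the rigorous derivation of the maximum principle on the convex system: $\wx\cd$ is optimal only among $x$-trajectories and not among triples $(x,\ov u,\ov\alpha)$, so variations of the control alone do not suffice, and one must coordinate simultaneous perturbations of the state and the relaxed control via the generalized implicit function theorem from the Appendix. Once this machinery is in hand, matching the resulting maximum condition to the stated form via the linearity argument above is a routine verification.
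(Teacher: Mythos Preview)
Your reduction step contains a genuine gap. You assert that ``every admissible trajectory of the convex system lies in the closure of the set of admissible trajectories of the original system \eqref{2},\,\eqref{3}'', and then use this to conclude that the local infimum $\wx\cd$ is automatically a local minimizer for the convex problem \eqref{1},\,\eqref{12},\,\eqref{13}. But that density statement is \emph{not} what the approximation lemmas give, and in fact it is false in general. The approximation lemmas (Lemmas~\ref{L3} and~\ref{L4}) show only that solutions of the relaxed differential equation can be uniformly approximated by solutions of the original differential equation; they say nothing about the endpoint constraints $f\le 0$, $g=0$. Example~4 in the paper exhibits an admissible trajectory of the convex system (the zero trajectory) that does \emph{not} lie in the closure of the admissible trajectories of the original system, precisely because the endpoint constraints cannot be met along any approximating sequence. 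Theorem~\ref{T3} recovers the density statement only under an additional regularity hypothesis, which is absent from Theorem~\ref{T2}.

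The paper avoids this obstacle by not decoupling the approximation from the constraint correction. Instead, for each $N>k$ and each augmenting control tuple $\ov v$, it builds a mapping $\widehat\Phi$ encoding the objective and the endpoint constraints, and argues by contradiction: if the linearization of $\widehat\Phi$ were surjective onto a neighborhood of~$0$ along the relevant cone (inclusion~\eqref{s4}), then the inverse function lemma (Lemma~\ref{L1}) is applied not to $\widehat\Phi$ itself but to the nearby maps $\Phi_s$ built from the approximating original-system trajectories $x_s$ of Lemma~\ref{L4}. This simultaneously forces the endpoint constraints to hold \emph{and} keeps the resulting trajectory admissible for the original system \eqref{2},\,\eqref{3}, yielding the contradiction with the local-infimum property. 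Separation then produces multipliers depending on $\ov v$, and a finite-intersection-property argument over all $N$ and $\ov v$ gives a single multiplier tuple. Your plan would recover the paper's argument only after you explain how to enforce the endpoint constraints along the approximating sequence; as written, the reduction to a maximum principle for the convex problem is unjustified. (A minor additional issue: you invoke compactness of $Q=U^k\times\Sigma^k$ in the main argument, but Theorem~\ref{T2} does not assume $U$ compact except in its final assertion.)
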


It can be seen that necessary optimality conditions form a~family of relations which is parameterized by triples
$(\wx\cd,\wou\cd,\woa\cd)$,  each relation having the form of a~maximum principle. Moreover,
if $\wx\cd$ is an optimal trajectory for problem
\eqref{1}--\eqref{3}, then these relations contain the classical Pontryagin maximum principle
($k=1$, $u_1\cd=\wu\cd$, $\ov\alpha_1\cd=1$), as well as some other relations, which, in general,
can provide an additional information about the optimal trajectory (see Example~$2$ in the section ``Examples'').
So, the above theorem strengthens  the Pontryagin maximum principle.

If a local infimum is not an admissible trajectory for system
\eqref{2},\,\eqref{3}, then this theorem provides a~tool (similar to a large extent to that based
on the Pontryagin maximum principle for finding the trajectories ``suspicious'' for a~local infimum.
Moreover, if such a~trajectory is found from conditions 1)--4), which are satisfied only for
$\lambda_0\ne0$, then by Theorem~\ref{T3} (to be proved below), this trajectory lies in the closure  of the set
of admissible trajectories for system \eqref{2},\,\eqref{3}.
All this will be illustrated in Example~$1$ in the section ``Examples''.

Example $3$ in the section ``Examples'' shows that the assumption in the last assertion of the theorem
that the set~$U$ be compact is essential.

\smallskip

\noindent \textbf{Proof of Theorem $\ref{T2}$.} \rm
Below, to alleviate the notation, we frequently write $x$, $u$, $\ov u$, $\alpha$,
$\ov\alpha$, etc., in place of $x\cd$, $u\cd$, $\ov u\cd$, $\alpha\cd$, $\ov\alpha\cd$, etc.

A neighborhood of a point $x$ from a normed space will be denoted by~$\wo(x)$.

We introduce the following notation for controls in the convex system \eqref{9},\,\eqref{10}.
We set
\begin{equation*}
\mathcal U=\{\,u\in L_\infty([t_0,t_1],\mathbb R^r) : u(t)\in
U\,\,\,\text{a.~e.\ on}\,\,\,[t_0,t_1]\,\}
\end{equation*}
and define
\begin{equation*}
\mathcal A_k=\{\,\ov\alpha=(\alpha_1,\ldots,\alpha_k)\in (L_\infty([t_0,t_1]))^k :
\ov\alpha(t)\in\Sigma^k \,\,\,\text{a.~e.\ on}\,\,\, [t_0,t_1]\,\},
\end{equation*}
where, we recall, $\Sigma^k=\{\,\ov\alpha=(\alpha_1,\ldots,\alpha_k)\in \mathbb R_+^k : \,\,
\sum_{i=1}^k\alpha_i=1\,\}$.

Let $(\wx,\wou,\woa)$ be a triple from the statement of the theorem
which is admissible  for the convex system \eqref{9},\,\eqref{10}, $N>k$, $\ov\alpha'=(\woa,0,\ldots,0)\in\mathcal A_N$,
$\ov v=(v_1,\ldots,v_{N-k})\in \mathcal U^{N-k}$ and $\ov
u'=(\wu_1,\ldots,\wu_k,v_1,\ldots,v_{N-k})$.

By the condition, $\wx$~is a solution to the differential equation
\begin{equation}\label{gor}
\dot x=\sum_{i=1}^k\widehat\alpha_i(t)\varphi(t,x,\wu_i(t))
\end{equation}
on $[t_0,t_1]$. By Lemma~\ref{L2} there exist neighborhoods $\wo(\wx(t_0))$
and $\wo(\ov\alpha')$ such that, for all $\xi\in \wo(\wx(t_0))$ and
$\ov\alpha=(\alpha_1,\ldots,\alpha_N)\in\wo(\ov\alpha')$, there exists a~unique solution
$x(\cdot,\xi,\ov\alpha;\ov u')$ to the equation
\begin{equation}\label{s1}
\dot x=\sum_{i=1}^N\alpha_i(t)\varphi(t,x,u_i(t)),\quad x(t_0)=\xi,
\end{equation}
on $[t_0,t_1]$, where $u_i=\wu_i$, $i=1,\ldots,k$, $u_{k+i}=v_i$, $i=1,\ldots,
N-k$. Moreover, the mapping $(\xi,\ov\alpha)\mapsto x(\cdot,\xi,\ov\alpha;\ov u')$,
\textit{qua} a~mapping into $C([t_0,t_1],\mathbb R^n)$, is continuously  differentiable.

Let us define the mapping $\widehat\Phi$, which associates with a~quadruple $(\xi,\ov\alpha,\nu_0,\nu)$
from $\wo(\wx(t_0))\times\wo(\ov\alpha')\times\mathbb R\times\mathbb R^{m_1}$  a~vector from
$\mathbb R^{1+m_1+m_2}$ by the rule
\begin{multline}\label{s3}
\widehat\Phi(\xi,\ov\alpha,\nu_0, \nu)=(f_0(\xi,x(t_1,\xi,\ov\alpha;\ov u'))-
f_0(\wx(t_0),\wx(t_1))+\nu_0,\\
f(\xi,x(t_1,\xi,\ov\alpha;\ov u'))+\nu, \ g(\xi,x(t_1,\xi,\ov\alpha;\ov u')))^T.
\end{multline}
The dependence of this mapping on a fixed tuple $\ov u'$ will not be indicated.

The mapping $\widehat\Phi$ is clearly well defined and is continuously differentiable
with respect to $(\xi,\ov\alpha,\nu_0,\nu)$ by the properties of the mapping $(\xi,\ov\alpha)\mapsto
x(\cdot,\xi,\ov\alpha;\ov u')$ and the mappings $f_0$, $f$ and~$g$.

The scheme of the proof of the necessary conditions in Theorem~\ref{T2} is as follows.  We show that the
inclusion
\begin{equation}\label{s4}
0\in{\rm int}\,\widehat\Phi'(\ww) (\mathbb R^n\times (\mathcal
A_N-\ov\alpha')\times\mathbb R_+\times(\mathbb R^{m_1}_++f(\wx(t_0),\wx(t_1)))),
\end{equation}
where $\ww=(\wx(t_0),\ov\alpha',0,-f(\wx(t_0),\wx(t_1)))$, contradicts the fact that $\wx$~is a~local infimum
for problem \eqref{1}--\eqref{3}. Then, separating from zero the convex set on the right of~\eqref{s4}
for each $N>k$ and
$\ov v=(v_1,\ldots,v_{N-k})\in \mathcal U^{N-k}$, we get  all the necessary conditions formulated in the theorem.

Let us use Lemma \ref{L1}  in the case $X=\mathbb
R^n\times(L_\infty([t_0,t_1]))^N\times\mathbb R\times \mathbb R^{m_1}$, \ $K=\mathbb
R^n\times \mathcal A_N\times\mathbb R_+\times\mathbb R^{m_1}_+$, and
$\ww=(\wx(t_0),\ov\alpha',0,-f(\wx(t_0),\wx(t_1)))$.

Let $\wo_0(\wx(t_0))$ and $\wo_0(\ov\alpha')$ be neighborhoods from Lemma \ref{L4}. Reducing these neighborhoods
and considering bounded neighborhoods $\wo_0(0)$ (of the origin in~$\mathbb R$) and
$\wo_0(-f(\wx(t_0),\wx(t_1))$, one can assume that the mapping $\widehat \Phi$ is bounded on
$V=\wo_0(\wx(t_0))\times\wo_0(\ov\alpha')\times\wo_0(0)\times\wo_0(-f(\wx(t_0),\wx(t_1))$.

The inclusion  \eqref{s4} is equivalent to the inclusion $0\in{\rm int}\,\widehat\Phi'(\ww)(K-\ww)$. So,
all the hypotheses of Lemma~\ref{L1} are satisfied.

By Lemma~\ref{4}, for sufficiently large $s\in\mathbb N$, the continuous
mappings $(\xi,\ov\alpha)\mapsto x_s(\cdot,\xi,\ov\alpha;\ov u')$ from $\mathcal
M=\wo_0(\wx(t_0))\times(\wo_0(\ov\alpha')\cap \mathcal A_N)$ into $C([t_0,t_1],\mathbb
R^n)$ are defined. Hence, for such~$s$, the
continuous mappings
\begin{multline*}
\Phi_s(\xi,\ov\alpha,\nu_0,\nu)=(f_0(\xi, x_s(t_1,\xi,\ov\alpha;\ov
u'))-f_0(\wx(t_0),\wx(t_1))
+\nu_0,\\
f(\xi, x_s(t_1,\xi,\ov\alpha;\ov u'))+\nu, \ g(\xi, x_s(t_1,\xi,\ov\alpha;\ov u')))^T
\end{multline*}
are defined on $\mathcal M\times\mathbb R\times\mathbb R^{m_1}$.
Since the mappings $(\xi,\ov\alpha)\mapsto x_s(\cdot,\xi,\ov\alpha;\ov u')$
lie in the space $C(\mathcal M,\, C([t_0,t_1],\,\mathbb R^n))$ and converge in this space
to the mapping $(\xi,\ov\alpha)\mapsto x(\cdot,\xi,\ov\alpha;\ov u')$ as
$s\to\infty$, and since the mappings
$f_0$, $f$ and~$g$ are continuously differentiable, it easily follows that the mappings $(\xi,\ov\alpha,\nu_0,\nu)\mapsto
\Phi_s(\xi,\ov\alpha,\nu_0,\nu)$ lie in the space $C(V\cap K,\,\mathbb
R^{1+m_1+m_2})$ (with reduced neighborhood~$V$, if necessary) and converge in this space
to the mapping $(\xi,\ov\alpha,\nu_0,\nu)\mapsto \widehat\Phi(\xi,\ov\alpha,\nu_0,\nu)$ as $s\to\infty$.

Let $\varepsilon>0$. There exists $s_0\in\mathbb N$ such that
$\|x_s(\cdot,\xi,\ov\alpha;\ov
u')-\wx\cd\|_{C([t_0,t_1],\mathbb R^n)}<\varepsilon/2$ for all $s\ge s_0$ and
$(\xi,\ov\alpha)\in\mathcal M$. Next, since the mapping
$(\xi,\ov\alpha)\mapsto x(\cdot,\xi,\ov\alpha;\ov u')$ is continuous at
$(\wx(t_0),\ov\alpha')$, there exists $\delta_0>0$ such that
$\|x(\cdot,\xi,\ov\alpha;\ov u')-\wx\cd\|_{C([t_0,t_1],\mathbb R^n)}<\varepsilon/2$ if
$|\xi-\wx(t_0)|+\|\ov\alpha-\ov\alpha'\|_{(L_\infty([t_0,t_1]))^N}<\delta_0$.

As a result, we see that if $s\ge s_0$ and if a pair $(\xi,\ov\alpha)\in\mathcal M$ is such that
$|\xi-\wx(t_0)|+\|\ov\alpha-\ov\alpha'\|_{(L_\infty([t_0,t_1]))^N}<\delta_0$, then
\begin{multline}\label{***}
\|x_s(\cdot,\xi,\ov\alpha;\ov u')-\wx\cd\|_{C([t_0,t_1],\mathbb
R^n)}\le\|x_s(\cdot,\xi,\ov\alpha;\ov u')\\-x(\cdot,\xi,\ov\alpha;\ov
u')\|_{C([t_0,t_1],\mathbb R^n)}+ \|x(\cdot,\xi,\ov\alpha;\ov
u')-\wx\cd\|_{C([t_0,t_1],\mathbb R^n)}<\varepsilon.
\end{multline}

Let a neighborhood $V_0\subset V$ of the point $\ww$ and constants $r_0$ and $\gamma$ be from Lemma~\ref{L1}.
We choose $r\in(0,r_0]$ so as to have $\gamma r\le\delta_0$ and let $s\ge s_0$
be such that $\Phi_s\in U_{C(V\cap K,\,\mathbb R^{1+m_1+m_2})}(\widehat \Phi,r)$.

We have $\widehat\Phi(\ww)=0$, and hence the pairs  $(\ww,y)$, where $y\in \mathbb R^{1+m_1+m_2}$ and
$|y|\le r$, satisfy relation \eqref{5n} of Lemma~\ref{L1}.

Let $z=(\ww,y)$ be such a pair and let  $y$ be of the form $y=(y_1,0)$, where $y_1<0$.
If $g_{\Phi_s}$ is the mapping from this lemma, then for this pair the lemma asserts that
(here we denote $g_{\Phi_s}(z)=w_z=(\xi_z,\ov\alpha_z,\nu_{0z},\nu_z)$)
\begin{equation}\label{exa3}
\begin{aligned}
f_0(\xi_z, x_s(t_1,\xi_z,\ov\alpha_z;\ov u'))-f_0(\wx(t_0),\wx(t_1))+\nu_{0z}&=y_1,\\
f(\xi_z, x_s(t_1,\xi_z,\ov\alpha_z;\ov u'))+\nu_z&=0,\\
g(\xi_z, x_s(t_1,\xi_z,\ov\alpha_z;\ov u'))&=0
\end{aligned}
\end{equation}
and
\begin{equation}\label{exa4}
\|w_z-\ww\|_X\le\gamma r.
\end{equation}

By Lemma~\ref{L4} the function $x_s(\cdot,\xi_z,\ov\alpha_z;\ov u')$  is a~solution
of the equation
\begin{equation*}
\dot x=\varphi(t,x,u_s(\ov\alpha_z;\ov u')(t)),\quad x(t_0)=\xi_z.
\end{equation*}
From the definition of $u_s(\ov\alpha_z;\ov u')$ (see Lemma~\ref{L3}) it follows that
$u_s(\ov\alpha_z;\ov u')(t)\in U$ for almost all $t\in[t_0,t_1]$.  It is also clear that
$\xi_z=x_s(t_0,\xi_z,\ov\alpha_z;\ov u')$.

Now from the second and third relations in \eqref{exa3} and using the inequality
$\nu_z\ge0$, it follows that the function $x_s(\cdot,\xi_z,\ov\alpha_z;\ov u')$ is admissible
for system \eqref{2},\,\eqref{3}. Moreover, from the first relation it follows that on this function the value of the functional $f_0$ is not smaller
than on~$\wx$ ($\nu_{0z}\ge0$, $y_1<0$).

Next, from \eqref{exa4} and the choice of $r$,
\begin{equation*}
|\xi_z-\wx(t_0)|+\|\ov\alpha_z-\ov\alpha'\|_{(L_\infty([t_0,t_1]))^N}\le\|w_z-\ww\|_X\le\gamma
r\le\delta_0,
\end{equation*}
and hence by \eqref{***} we have $\|x_s(\cdot,\xi_z,\ov\alpha_z;\ov
u')-\wx\cd\|_{C([t_0,t_1],\mathbb R^n)}<\varepsilon$.

So, in any neighborhood of the point~$\wx$ there exists a~function which is
admissible  for system \eqref{2},\,\eqref{3} and on which the value of the functional to be minimized
is smaller than on~$\wx$. This contradicts the fact that  $\wx$~is a~local infimum for problem \eqref{1}--\eqref{3}.

So, inclusion \eqref{s4} does not hold for any $N>k$ and any tuple $\ov v=(v_1,\ldots,v_{N-k})\in\mathcal U^{N-k}$.
Therefore, for any such $N$ and~$\ov v$, it follows from the separation theorem that there exists
a~nonzero vector $\lambda(\ov v)\in (\mathbb R^{1+m_1+m_2})^*$ for which
\begin{equation}\label{maga}
\la\lambda(\ov
v),\Phi'(\ww)[\xi,\,\ov\alpha-\ov\alpha',\,\nu_0,\,\nu+f(\wx(t_0),\wx(t_1))]\ra\ge0
\end{equation}
for all $(\xi,\ov\alpha,\nu_0,\nu)\in\mathbb R^n\times \mathcal A_N\times\mathbb
R_+\times\mathbb R^{m_1}_+$.

Let $\lambda(\ov v)=(\lambda_0(\ov v),\lambda_1(\ov
v),\lambda_{2}(\ov v))\in \mathbb R\times(\mathbb R^{m_1})^*\times
(\mathbb R^{m_2})^*$. By the chain rule, inequality \eqref{maga} can be written as
\begin{multline}\label{maga1}
\lambda_0(\ov v)(\la\widehat f_{0\zeta_1},\,\xi\ra+\la \widehat
f_{0\zeta_2},\, (\wx_\xi\xi)(t_1)\ra+\la \widehat
f_{0\zeta_2},\,(\widehat
x_{\ov\alpha}(\ov\alpha-\ov\alpha'))(t_1)\ra+\nu_0)\\+\la\lambda_1(\ov
v),\,\widehat f_{\zeta_1}\xi+\widehat
f_{\zeta_2}(\wx_\xi\xi)(t_1)+\widehat
f_{\zeta_2}(\wx_{\ov\alpha}(\ov\alpha-\ov\alpha'))(t_1)
+\nu+f(\wx(t_0),\wx(t_1))\ra\\+\la\lambda_2(\ov v),\,\widehat
g_{\zeta_1}\xi+\widehat g_{\zeta_2}(\wx_\xi\xi)(t_1)+\widehat
g_{\zeta_2}(\wx_{\ov\alpha}(\ov\alpha-\ov\alpha'))(t_1)\ra\ge0
\end{multline}
for any $(\xi,\ov\alpha,\nu_0,\nu)\in\mathbb R^n\times \mathcal
A_N\times\mathbb R_+\times\mathbb R^{m_1}_+$, where
$\wx_\xi=\wx_\xi(\ov v)$ and $\wx_{\ov\alpha}=\wx_{\ov\alpha}(\ov v)$
are the partial derivatives of the mapping
$(\xi,\ov\alpha)\mapsto x(\cdot,\xi,\ov\alpha;\ov u')$ at the point $(\wx(t_0),\ov\alpha')$
with respect to $\xi$ and~$\ov\alpha$, respectively.

Let us show that there exists a~tuple
$\lambda=(\lambda_0,\lambda_f,\lambda_g)\in \mathbb R\times(\mathbb
R^{m_1})^*\times (\mathbb R^{m_2})^*$, $|\lambda|=1$, such that
\eqref{maga1} holds with this~$\lambda$ for any
$N>k$ and any tuple $\ov v=(v_1,\ldots,v_{N-k})\in\mathcal U^{N-k}$.

For a given tuple $\ov v=(v_1,\ldots,v_{N-k})\in\mathcal U^{N-k}$
we denote by $\Lambda_N(\ov v)$ the set of all such vectors
$\lambda(\ov v)$, $|\lambda(\ov v)|=1$, that satisfy
\eqref{maga1}. It is clear that $\Lambda_N(\ov v)$ is a~closed
subset of the (compact) unit sphere of $(\mathbb R^{1+m_1+m_2})^*$. Let us check that the family
$\mathcal A$ of all such subsets (over all $N>k$ and tuples $\ov
v=(v_1,\ldots,v_{N-k})\in\mathcal U^{N-k}$) has the finite intersection property.

Let $\Lambda_{N_j}(\ov v^j)$, $j=1,\ldots,s$, be an arbitrary family of sets
from~$\mathcal A$ ($N_j-k$~is the length of the vector~$\ov
v^j$). Let us show that $\cap_{j=1}^s\Lambda_{N_j}(\ov v^j)\ne\emptyset$.

Indeed, we set $\ov v=(\ov v^1,\ldots,\ov v^s)$ (the length of~$\ov v$
is denoted by $N$). Let $1\le j\le s$.
Consider the ffamily $\ov v^j$ and define $\ov \alpha^j
=(\alpha_1,\ldots,\alpha_k,\alpha_{k+1},\ldots,\alpha_{N_j})\in
\mathcal A_{N_j}$.

We augment the vector $\ov \alpha^j $ by zero functions to the vector $\ov
\alpha$ of length~$N$. It is clear that $\ov \alpha\in \mathcal A_N$.

Setting $\ov \alpha'_{N_j}=(\woa,0)\in\mathcal A_{N_j}$, it is easily seen that
\begin{equation*}
(\wx_{\ov\alpha}(\ov\alpha-\ov\alpha'))(t_1)=
x_{\ov\alpha^j}(\wx(t_0),\ov
\alpha'_{N_j})[\ov\alpha^j-\ov\alpha'_{N_j}](t_1).
\end{equation*}
Hence, using \eqref{maga1}, we get the inclusion $\lambda(\ov
v)\in\Lambda_{N_j}(\ov v^j)$, thereby showing that $\lambda(\ov v)\in \cap_{j=1}^s\Lambda_{N_j}(\ov v^j)$.

So, the system of sets~$\mathcal A$ has the finite intersection property, and hence,
there exist $\lambda_0\in\mathbb R$, $\lambda_f\in (\mathbb
R^{m_1})^*$ and $\lambda_g\in (\mathbb R^{m_2})^*$ such that \eqref{maga1}
holds for any tuple~$\ov v$. In particular,
this relation holds for the tuples consisting of a~single element $\ov v=v$; that is,
\begin{multline}\label{maga3}
\lambda_0(\la\widehat f_{0\zeta_1},\,\xi\ra+\la\widehat
f_{0\zeta_2},\,(\wx_\xi(v)\xi)(t_1)+\la\widehat
f_{0\zeta_2},\,(\widehat
x_{\ov\alpha}(v)(\ov\alpha-\ov\alpha'))(t_1)\ra+\nu_0)\\+\la\lambda_f,\,\widehat
f_{\zeta_1}\xi+\widehat f_{\zeta_2}(\wx_\xi(v)\xi)(t_1)+\widehat
f_{\zeta_2}(\wx_{\ov\alpha}(v)(\ov\alpha-\ov\alpha')(t_1)
+\nu+f(\wx(t_0),\wx(t_1))\ra\\+\la\lambda_g,\,\widehat
g_{\zeta_1}\xi+\widehat g_{\zeta_2}(\wx_\xi(v)\xi)(t_1)+\widehat
g_{\zeta_2}(\wx_{\ov\alpha}(v)(\ov\alpha-\ov\alpha'))(t_1)\ra\ge0
\end{multline}
for all $(\xi,\ov\alpha,\nu_0,\nu)\in\mathbb R^n\times \mathcal
A_{k+1}\times\mathbb R_+\times\mathbb R^{m_1}_+$ and $v\in \mathcal
U$.

Now we employ this inequality to derive the necessary conditions from the theorem.

Setting $\xi=0$, $\ov\alpha=\alpha'$ and
$\nu=-f(\wx(t_0),\wx(t_1))$ in \eqref{maga3}, we see that $\lambda_0\nu_0\ge0$ for any $\nu_0\ge0$,
and hence, $\lambda_0\ge0$.

If $\xi=0$, $\ov\alpha=\ov\alpha'$, $\nu_0=0$ and $\nu=\nu'-f(\wx(t_0),\wx(t_1))$, where $\nu'\in \mathbb R^{m_1}_+$, then
from \eqref{maga3} it follows that $\la\lambda_f,\,\nu'\ra\ge0$ for any $\nu'\in\mathbb R^{m_1}_+$; that is, $\lambda_f\in(\mathbb
R^{m_1})^*_+$.

Let $\xi=0$, $\ov\alpha=\ov\alpha'$, $\nu_0=0$ and $\nu=0$. Now from inequality \eqref{maga3}
it follows that $\la\lambda_f,\,f(\wx(t_0),\wx(t_1))\ra\ge0$. But $\lambda_f\in(\mathbb
R^{m_1})^*_+$, $f(\wx(t_0),\wx(t_1))\le0$, and therefore,
$\la\lambda_f,\,f(\wx(t_0),\wx(t_1))\ra\le0$; that is,
$\la\lambda_f,\,f(\wx(t_0),\wx(t_1))\ra=0$, which proves the complementary slackness condition.

Let $p$ be the solution of the equation
\begin{equation}\label{maga4}
\begin{gathered}
\dot p
=-p\sum_{i=1}^k\widehat\alpha_i(t)\varphi_x(t,\wx(t),\wu_i(t)),\\p(t_1)=-\lambda_0{\widehat
{f}}_{0\zeta_2}-{\widehat {f}_{\zeta_2}}^*\lambda_f-{\widehat
{g}_{\zeta_2}}^*\lambda_g.
\end{gathered}
\end{equation}
In \eqref{maga3} we put $\ov\alpha=\ov\alpha'$, $\nu_0=0$ and $\nu=-f(\wx(t_0),\wx(t_1))$.
Since $\xi\in\mathbb R^n$, we have
\begin{multline}\label{maga3n}
\lambda_0(\la \widehat f_{0\zeta_1},\,\xi\ra+\la \widehat
f_{0\zeta_2},\,(\wx_\xi(v)\xi)(t_1)\ra)+\la \lambda_f,\,\widehat
f_{\zeta_1} \xi+\widehat f_{\zeta_2} (\wx_\xi(v)\xi)(t_1)\ra\\+\la
\lambda_g,\,\widehat g_{\zeta_1} \xi+\widehat g_{\zeta_2}
(\wx_\xi(v)\xi)(t_1)\ra=0.
\end{multline}
From \eqref{s2} of Lemma~\ref{L2} it follows that the derivative $\wx_\xi(v)$ (which is identified
with the corresponding matrix function) satisfies the equation
\begin{equation}\label{maga3nn}
\dot
\wx_\xi(v)=\sum_{i=1}^k\widehat\alpha_i(t)\varphi_x(t,\wx(t),\wu_i(t))\wx_\xi(v),\quad
\wx_\xi(v)(t_0)=E,
\end{equation}
where $E$ is the identity matrix.

From \eqref{maga3n}, \eqref{maga4} and \eqref{maga3nn}  we see that
\begin{multline*}
\la\lambda_0\widehat f_{0\zeta_1}+\widehat
f_{\zeta_1}^*\lambda_f+\widehat g_{\zeta_1}^*\lambda_g,\,\xi\ra
=-\la\lambda_0\widehat f_{0\zeta_2}+\widehat
f_{\zeta_2}^*\lambda_f+\widehat g_{\zeta_2}^*\lambda_g,\,
(\wx_\xi(v)\xi)(t_1)\ra\\=\la
p(t_1),\,(\wx_\xi(v)\xi)(t_1)\ra=\int_{t_0}^{t_1}(\la
p(t),\,\frac{d}{dt}(\wx_\xi(v)\xi)(t)\ra\\+\la\dot
p(t),\,(\wx_\xi(v)\xi)(t)\ra)\,dt+\la
p(t_0),\,(\wx_\xi(v)\xi)(t_0)\ra\\= \la
p(t_0),\,(\wx_\xi(v)\xi)(t_0)\ra=\la p(t_0),\,\xi\ra
\end{multline*}
and hence,
\begin{equation*}
p(t_0)=\lambda_0\widehat f_{0\zeta_1}+\widehat
f_{\zeta_1}^*\lambda_f+\widehat g_{\zeta_1}^*\lambda_g.
\end{equation*}
This together with \eqref{maga4} proves assertions $1)$ and $2)$ of the theorem. Let us now prove the maximum condition.

In \eqref{maga3} we put $\xi=0$, $\nu_0=0$ and $\nu=-f(\wx(t_0),\wx(t_1))$. Hence
\begin{equation}\label{osn}
\la\lambda_0\widehat f_{0\zeta_2}+\widehat
f_{\zeta_2}^*\lambda_f+\widehat g_{\zeta_2}^*\lambda_g, \
(\wx_{\ov\alpha}(v)(\ov\alpha-\ov\alpha')(t_1)\ra\ge0
\end{equation}
for all $\ov\alpha\in \mathcal A_{k+1}$ and $v\in \mathcal U$.

Another appeal to \eqref{s2} shows that for any
 $\ov\alpha=(\alpha_1,\ldots,\alpha_{k+1})\in(L_\infty([t_0,t_1]))^{k+1}$
the function $q(v,\ov\alpha)=\wx_{\ov\alpha}(v)\ov\alpha$  satisfies the equation
\begin{multline}\label{maga4n}
\dot q(v,\ov\alpha)
=\sum_{i=1}^k\widehat\alpha_i(t)\varphi_x(t,\wx(t),\wu_i(t))q(v,\ov\alpha)+
\sum_{i=1}^{k}\alpha_i(t)\varphi(t,\wx(t),\wu_i(t))
\\+\alpha_{k+1}(t)\varphi(t,\wx(t),v(t)),\quad
q(v,\ov\alpha)(t_0)=0.
\end{multline}
Let $\ov\alpha\in \mathcal A_{k+1}$. From \eqref{maga4}, \eqref{osn}
and \eqref{maga4n}  we have
{\allowdisplaybreaks
\begin{multline*}
0\le-\la p(t_1),\,
q(v,\ov\alpha-\ov\alpha')(t_1)\ra\\=\int_{t_0}^{t_1}(\la p(t),\,\dot
q(v,\ov\alpha-\ov\alpha')(t)\ra+\la \dot p(t),\,
q(v,\ov\alpha-\ov\alpha')(t)\ra)dt\\
=-\int_{t_0}^{t_1}(\la
p(t),\,\sum_{i=1}^k\widehat\alpha_i(t)\varphi_x(t,\wx(t),\wu_i(t))
q(v,\ov\alpha-\ov\alpha')(t)
\\+\sum_{i=1}^{k}(\alpha_i(t)-\widehat\alpha_i(t))\varphi(t,\wx(t),\wu_i(t))
+\alpha_{k+1}(t)\varphi(t,\wx(t),v(t))\ra\\-\la
p(t)\sum_{i=1}^k\widehat\alpha_i(t)\varphi_x(t,\wx(t),
\wu_i(t)),\,q(v,\ov\alpha-\ov\alpha')(t)\ra)\,dt.
\end{multline*}
}
It follows that
\begin{multline*}
\int_{t_0}^{t_1}\alpha_{k+1}(t)\la p(t),\,\varphi(t,\wx(t),v(t))\ra\,dt\\\le
\int_{t_0}^{t_1}\la
p(t),\,\sum_{i=1}^{k}(\widehat\alpha_i(t)-\alpha_i(t))\varphi(t,\wx(t),\wu_i(t))\ra\,dt
\end{multline*}
for any tuple $\ov\alpha=(\alpha_1,\ldots,\alpha_{k+1})\in \mathcal A_{k+1}$ and $v\in \mathcal U$.

For any $1\le i\le k$, we set
$(\widehat\alpha_1,\ldots,\widehat\alpha_{i-1},(1/2)\widehat\alpha_{i},\widehat\alpha_{i+1},\ldots,
\widehat\alpha_k,(1/2)\widehat\alpha_{i})\in \mathcal A_{k+1}$. Substituting this tuple into the last inequality,
we see that
\begin{multline}\label{maga5n}
\int_{t_0}^{t_1}\widehat\alpha_{i}(t)\la p(t),\,\varphi(t,\wx(t),v(t))\ra\,dt\\\le
\int_{t_0}^{t_1}\widehat\alpha_i(t)\la p(t),\,\varphi(t,\wx(t),\wu_i(t))\ra\,dt,\quad
i=1,\ldots,k,
\end{multline}
for all $v\cd\in \mathcal U$.

We let $T_0$ denote the set of Lebesgue points of the functions $\widehat\alpha_i\cd$ and $\la
p\cd,\,\varphi(\cdot,\wx\cd,\wu_i)\ra$, $i=1,\ldots,k$, on $(t_0,t_1)$. Since these
functions are essentially bounded, it can be easily checked that $T_0$ is the set of Lebesgue points
also for the functions  $\widehat\alpha_i\cd\la p\cd,\,\varphi(\cdot,\wx\cd,\wu_i)\ra$, $i=1,\ldots,k$.

Let $\tau\in T_0$. We fix $1\le i\le k$. For any $h>0$ such that $[\tau-h,\tau+h]\subset (t_0,t_1)$, we set
$v_h(t)=v$ if $t\in [\tau-h,\tau+h]$ and $v_h(t)=\wu_i(t)$ if $t\in
[t_0,t_1]\setminus[\tau-h,\tau+h]$. It is clear that $v_h\cd\in\mathcal U$ and so, using \eqref{maga5n},
\begin{equation*}
\frac1{2h}\int_{\tau-h}^{\tau+h}\widehat\alpha_{i}(t)\la
p(t),\,\varphi(t,\wx(t),v)\ra\,dt\le
\frac1{2h}\int_{\tau-h}^{\tau+h}\widehat\alpha_i(t)\la
p(t),\,\varphi(t,\wx(t),\wu_i(t))\ra\,dt.
\end{equation*}
Since the function $\varphi(\cdot,\wx,v)$ is continuous, $\tau$ is its Lebesgue point, and moreover, by the above,
$\tau$~is also a~Lebesgue point for the function  $\widehat\alpha_i\cd\la
p\cd,\,\varphi(\cdot,\wx,v)\ra$. Making $h\to0$ in the last inequality, we find that
\begin{equation*}
\widehat\alpha_i(\tau)\la p(\tau),\,\varphi(\tau,\wx(\tau),v)\ra
\le\widehat\alpha_i(\tau)\la p(\tau),\,\varphi(\tau,\wx(\tau),\wu_i(\tau))\ra
\end{equation*}
for each  $i=1,\ldots,k$.

Adding these inequalities and taking into account equation \eqref{gor}, which
becomes a~sharp equality
at the Lebesgue points of the function on the right, we get the relation
\begin{equation*}
\la p(\tau),\,\varphi(\tau,\wx(\tau),v)\ra \le\la
p(\tau),\,\sum_{i=1}^k\widehat\alpha_i(\tau)\varphi(\tau,\wx(\tau),\wu_i(\tau))\ra =\la
p(\tau),\,\dot{\wx}(\tau)\ra.
\end{equation*}
Since $v\in U$ is arbitrary and since $T_0$ is a~set of full measure, this relation
is equivalent to condition~$4)$ of the theorem. So, all the necessary conditions in the theorem are proved.

Let us prove the last assertion of the theorem. By definition of local infimum, there exists
a~sequence of admissible trajectories for system  \eqref{2},\,\eqref{3}, which converges to~$\wx\cd$.
It is clear that these trajectories are also admissible for the convex system
\eqref{12},\,\eqref{13} for any~$k$. To prove that the function $\wx\cd$
is also admissible for this system with $k=n+1$ we employ Filippov's theorem from
(\cite{F}), in which it is shown, in particular (in our setting) that
if $Q$~is a~compact set and the set
\begin{equation*}
R(t,x)=\{\, \sum_{i=1}^{n+1}\alpha_i\varphi(t,x,u_i)\in\mathbb R^n :
(u_1,\ldots,u_{n+1},\alpha_1,\ldots,\alpha_{n+1})\in Q\,\}
\end{equation*}
is convex for any $t\in[t_0,t_1]$ and $x\in\mathbb R^n$, then the limit of a~converging
sequence of admissible trajectories is also an admissible trajectory.
Since in our setting the set~$Q$ is clearly
compact, and since the convexity of the set $R(t,x)$ for any
$t\in[t_0,t_1]$ and $x\in\mathbb R^n$ is secured by Theorem~\ref{T1}, this proves the
last assertion of Theorem~\ref{T2}.

\smallskip

To formulate the next result we introduce the concept of regularity of the convex system
\eqref{12},\,\eqref{13}.

Let $k\in\mathbb N$ and let $(\wx\cd,\wou\cd,\woa\cd)$ be an admissible triple for the convex system
\eqref{12},\,\eqref{13}. By $\Lambda(\wx\cd,\wou\cd,\woa\cd)$ we denote the set
of tuples $(\lambda_f,\lambda_g,p\cd)\in (\mathbb R^{m_1})^*\times(\mathbb R^{m_2})^*\times
AC([t_0,t_1],(\mathbb R^n)^*)$, where $\lambda_f$ and $\lambda_g$ are not simultaneously zero, satisfying
the relations
\begin{equation}\label{reg}
\begin{aligned}
&\dot p(t) =-p(t)\sum_{i=1}^k\widehat\alpha_i(t)\varphi_x(t,\wx(t),\wu_i(t)),\\
&p(t_0)={\widehat {f}_{\zeta_1}}^*\lambda_f+{\widehat
{g}_{\zeta_1}}^*\lambda_g,\quad p(t_1)=-{\widehat
{f}_{\zeta_2}}^*\lambda_f-{\widehat {g}_{\zeta_2}}^*\lambda_g,\\[5pt]
&\la \lambda_f, f(\wx(t_0),\wx(t_1))\ra=0,\\[3pt]
&\max_{u\in U}\la p(t),\varphi(t,\wx(t),u)\ra=\la p(t),\dot
{\wx}(t)\ra\,\,\,\text{for almost all}\,\,\, t\in[t_0,t_1].
\end{aligned}
\end{equation}
The condition $\Lambda(\wx\cd,\wou\cd,\woa\cd)\ne\emptyset$ means that
the necessary conditions of geometric optimality in the form  of a~maximum principle are satisfied for the
convex system
\eqref{12},\,\eqref{13} at the point $(\wx\cd,\wou\cd,\woa\cd)$. Therefore, the negation of this condition
(that is, the case when $\Lambda(\wx\cd,\wou\cd,\woa\cd)=\emptyset$) can be looked upon as a~regularity
condition for the convex system \eqref{12},\,\eqref{13} at the point
$(\wx\cd,\wou\cd,\woa\cd)$. Taking this into account, we say that the convex system \eqref{12},\,\eqref{13} is {\it
regular at a~point $(\wx\cd,\wou\cd,\woa\cd)$} if $\Lambda(\wx\cd,\wou\cd,\woa\cd)=\emptyset$.

\begin{theorem}\label{T3}
If a convex system \eqref{12},\,\eqref{13} is regular at a~point
$(\wx\cd,\wou\cd,\woa\cd)$, then $\wx\cd$ lies in the closure of the set of admissible trajectories
for system \eqref{2},\,\eqref{3}.
\end{theorem}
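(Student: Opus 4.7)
The plan is to argue by contradiction: assuming $\wx\cd$ is not in the closure of admissible trajectories of system \eqref{2},\eqref{3}, I shall produce a nonzero tuple $(\lambda_f,\lambda_g,p\cd)\in\Lambda(\wx\cd,\wou\cd,\woa\cd)$, contradicting regularity. The argument will follow the proof of Theorem~\ref{T2} verbatim, except that the $f_0$-coordinate is stripped from the mapping $\wf$, so that the multiplier $\lambda_0$ is absent throughout.

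For each $N>k$ and $\ov v=(v_1,\ldots,v_{N-k})\in\mathcal U^{N-k}$, using Lemma~\ref{L2} I attach to the triple $(\wx,\wou,\woa)$ the reduced $C^1$ map
\[
\widetilde\Phi(\xi,\ov\alpha,\nu)=\bigl(f(\xi,x(t_1,\xi,\ov\alpha;\ov u'))+\nu,\ g(\xi,x(t_1,\xi,\ov\alpha;\ov u'))\bigr)^T
\]
on a neighborhood of $\widetilde w=(\wx(t_0),\ov\alpha',-f(\wx(t_0),\wx(t_1)))$, observing that $\widetilde\Phi(\widetilde w)=0$. I then split cases on whether
\[
0\in{\rm int}\,\widetilde\Phi'(\widetilde w)\bigl(\mathbb R^n\times(\mathcal A_N-\ov\alpha')\times(\mathbb R^{m_1}_++f(\wx(t_0),\wx(t_1)))\bigr)
\]
holds.

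If it does, I apply Lemma~\ref{L1} to $\widetilde\Phi$ with $K=\mathbb R^n\times\mathcal A_N\times\mathbb R^{m_1}_+$ and then insert the approximations $x_s(\cdot,\xi,\ov\alpha;\ov u')$ from Lemma~\ref{L4} exactly as in the proof of Theorem~\ref{T2}. Taking a pair $z=(\widetilde w,y)$ in Lemma~\ref{L1} with $y=0\in\mathbb R^{m_1+m_2}$ produces, for all sufficiently large $s$, points $(\xi_z,\ov\alpha_z,\nu_z)$ with $\nu_z\ge 0$ satisfying $f(\xi_z,x_s(t_1,\xi_z,\ov\alpha_z;\ov u'))+\nu_z=0$ and $g(\xi_z,x_s(t_1,\xi_z,\ov\alpha_z;\ov u'))=0$, while estimate~\eqref{***} forces $x_s(\cdot,\xi_z,\ov\alpha_z;\ov u')$ into an arbitrarily small $C([t_0,t_1],\mathbb R^n)$-neighborhood of~$\wx$; by Lemma~\ref{L4} each such trajectory corresponds to the $U$-valued control $u_s(\ov\alpha_z;\ov u')$ supplied by Lemma~\ref{L3}, and is therefore admissible for~\eqref{2},\eqref{3}, contradicting the standing assumption.

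Hence the inclusion fails for every $N$ and $\ov v$, and the separation theorem supplies a nonzero pair $(\lambda_f(\ov v),\lambda_g(\ov v))$ of unit norm for which the analogue of~\eqref{maga} holds with $\lambda_0\equiv 0$. The finite-intersection-property argument from the proof of Theorem~\ref{T2} goes through verbatim (the zero-extension trick on $\ov\alpha^j$ uses only the state equation, not~$f_0$), yielding a single nonzero $(\lambda_f,\lambda_g)$ that satisfies the $\lambda_0=0$ version of~\eqref{maga3} for all $v\in\mathcal U$ and all $(\xi,\ov\alpha,\nu)\in\mathbb R^n\times\mathcal A_{k+1}\times\mathbb R^{m_1}_+$. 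Defining $p\cd$ by the adjoint equation with $p(t_1)=-\widehat f_{\zeta_2}^*\lambda_f-\widehat g_{\zeta_2}^*\lambda_g$, the substitutions made after~\eqref{maga3} in the proof of Theorem~\ref{T2} give the transversality condition at $t_0$, complementary slackness, and the pointwise maximum condition of~\eqref{reg}, so that $(\lambda_f,\lambda_g,p)\in\Lambda(\wx,\wou,\woa)$, contradicting regularity. The main thing to check is the first case: that deleting the $f_0$ coordinate preserves the hypotheses of Lemma~\ref{L1} and the $C(V\cap K,\mathbb R^{m_1+m_2})$-convergence of the approximations, and, most critically, that the trajectories produced via Lemmas~\ref{L3}, \ref{L4} correspond to genuine $U$-valued controls of the original non-relaxed system.
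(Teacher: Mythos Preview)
Your proposal is correct and follows essentially the same approach as the paper's own proof: introduce the truncated map $\widetilde\Phi$, show that failure of the interior-point inclusion for every $N$ and $\ov v$ would (via separation and the finite-intersection argument of Theorem~\ref{T2}) produce a nonzero $(\lambda_f,\lambda_g,p)\in\Lambda(\wx,\wou,\woa)$ contradicting regularity, and then use Lemma~\ref{L1} together with the approximations from Lemmas~\ref{L3},~\ref{L4} to manufacture admissible trajectories of the original system in any neighborhood of~$\wx$. The only cosmetic difference is that you wrap the argument as a proof by contradiction starting from ``$\wx$ not in the closure,'' whereas the paper proceeds directly (regularity $\Rightarrow$ inclusion holds for some $N,\ov v$ $\Rightarrow$ $\wx$ is in the closure); the two are logically equivalent and use the same ingredients in the same way.
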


\begin{proof} \rm
At the beginning of the proof of the previous theorem, we introduced the mapping $\widehat\Phi$.
Let us consider here its ``truncated'' variant
\begin{equation*}
\widetilde\Phi(\xi,\ov\alpha, \nu)=(f(\xi,x(t_1,\xi,\ov\alpha;\ov u'))+\nu, \
g(\xi,x(t_1,\xi,\ov\alpha;\ov u')))^T,
\end{equation*}
which differs from the mapping $\widehat\Phi$ by the absence of the component
$f_0(\xi,x(t_1,\xi,\ov\alpha;\ov u'))- f_0(\wx(t_0),\wx(t_1))+\nu_0$.

We claim that if a convex system \eqref{12},\,\eqref{13} is regular at a~point
$(\wx\cd,\wou\cd,\woa\cd)$, then, for some $N>k$ and a~tuple $\ov
v=(v_1,\ldots,v_{N-k})\in\mathcal U^{N-k}$,
\begin{equation}\label{kk}
0\in{\rm int}\,\widetilde\Phi'(\widetilde w) (\mathbb R^n\times (\mathcal
A_N-\ov\alpha')\times(\mathbb R^{m_1}_++f(\wx(t_0),\wx(t_1)))),
\end{equation}
where $\widetilde w=(\wx(t_0),\ov\alpha',-f(\wx(t_0),\wx(t_1)))$ (inclusion \eqref{kk} is similar to
inclusion~\eqref{s4}).

Indeed, if inclusion \eqref{kk} is not satisfied for any $N>k$ and any tuple $\ov
v=(v_1,\ldots,v_{N-k})\in\mathcal U^{N-k}$, then arguing as in Theorem~\ref{T2}, we conclude that
conditions \eqref{reg} (which coincide with the necessary conditions in Theorem~\ref{T2} with $\lambda_0=0$)
hold with some nonzero tuple $(\lambda_f,\lambda_g)$, contradicting the assumption.

But if inclusion \eqref{kk} holds, then arguing again as in the proof of Theorem~\ref{T2}
in the part pertaining to the inverse function lemma we conclude that, for each
$\varepsilon>0$, there exists an admissible function for system \eqref{2},\,\eqref{3}  which differs
by lesser than~$\varepsilon$ from $\wx\cd$ in the metric of $C([t_0,t_1],\mathbb R^n)$, which proves the theorem.
\end{proof}

Let us derive two corollaries from this theorem. Let consider the problem of minimization of the functional $f_0$
(see~\eqref{1}) on trajectories of the convex system \eqref{12},\,\eqref{13}. This problem will be referred to as
the convex problem \eqref{1},\,\eqref{12},\,\eqref{13}. The concept of an
optimal trajectory for this problem is defined in a~natural way. Moreover, when speaking
about an admissible trajectory for problem \eqref{1},\,\eqref{12},\,\eqref{13} we imply that this trajectory
is admissible for system \eqref{12},\,\eqref{13}, which imposes constraints in this problem.

\begin{corollary}\label{co1}
If a convex system \eqref{12},\,\eqref{13} is regular at a~point point
$(\wx\cd,\wou\cd,\woa\cd)$ and if $\wx\cd$ is an optimal trajectory in the convex problem
\eqref{1},\,\eqref{12},\,\eqref{13}, then $\wx\cd$~is a~local infimum for problem
\eqref{1}--\eqref{3}.
\end{corollary}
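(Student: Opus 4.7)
The plan is to combine Theorem~\ref{T3} with the local optimality of $\wx\cd$ in the convex problem. The corollary follows because regularity places $\wx\cd$ inside the closure of the admissible set for the original system, while optimality in the wider convex problem automatically forces a minimum on the (smaller) original admissible set, which then persists in the closure by continuity.

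More precisely, I would first invoke Theorem~\ref{T3}: regularity of the convex system \eqref{12},\,\eqref{13} at $(\wx\cd,\wou\cd,\woa\cd)$ gives that $\wx\cd$ lies in the closure of the set of admissible trajectories for system \eqref{2},\,\eqref{3}. So it makes sense to ask whether $\wx\cd$ is a local infimum for \eqref{1}--\eqref{3}. Next, since $\wx\cd$ is an optimal trajectory for the convex problem \eqref{1},\,\eqref{12},\,\eqref{13}, by Definition~\ref{d2} there is a neighborhood $\wo(\wx\cd)$ in $C([t_0,t_1],\mathbb R^n)$ such that $f_0(\wx(t_0),\wx(t_1))\le f_0(y(t_0),y(t_1))$ for every trajectory $y\cd\in\wo(\wx\cd)$ admissible for the convex system.

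The second key observation is the trivial inclusion of admissible sets: any trajectory $y\cd$ admissible for \eqref{2},\,\eqref{3} is admissible for the convex system \eqref{12},\,\eqref{13} as well (take $\alpha_1\equiv 1$, $\alpha_i\equiv 0$ for $i>1$, and $u_1=u$ the control realizing $y\cd$). Consequently, for every $y\cd\in\wo(\wx\cd)$ admissible for \eqref{2},\,\eqref{3} we already have $f_0(\wx(t_0),\wx(t_1))\le f_0(y(t_0),y(t_1))$.

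Finally, I would pass to the closure. Let $y\cd\in\wo(\wx\cd)$ lie in the closure of the set of admissible trajectories for \eqref{2},\,\eqref{3}. Then there is a sequence $y_N\cd$ of such admissible trajectories with $y_N\cd\to y\cd$ in $C([t_0,t_1],\mathbb R^n)$; shrinking the neighborhood if necessary, we may assume $y_N\cd\in\wo(\wx\cd)$ for all sufficiently large~$N$. By the previous step $f_0(\wx(t_0),\wx(t_1))\le f_0(y_N(t_0),y_N(t_1))$ for such~$N$, and since $f_0$ is continuously differentiable (hence continuous on $\mathbb R^n\times\mathbb R^n$) the evaluation map $y\cd\mapsto f_0(y(t_0),y(t_1))$ is continuous on $C([t_0,t_1],\mathbb R^n)$; passing to the limit yields $f_0(\wx(t_0),\wx(t_1))\le f_0(y(t_0),y(t_1))$. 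By Definition~\ref{d3}, this is exactly the assertion that $\wx\cd$ is a local infimum for problem \eqref{1}--\eqref{3}. The whole proof is essentially a bookkeeping argument and the only genuine content is Theorem~\ref{T3}, which guarantees that $\wx\cd$ does belong to the closure in question — without it the conclusion would be vacuous at $\wx\cd$ itself.
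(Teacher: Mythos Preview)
Your proof is correct and follows essentially the same approach as the paper: invoke Theorem~\ref{T3} to place $\wx\cd$ in the closure, use that admissible trajectories for \eqref{2},\,\eqref{3} are admissible for the convex system, and conclude. You are slightly more explicit than the paper in passing the inequality from admissible trajectories to their closure via continuity of $f_0$, but this is a cosmetic difference rather than a different route.
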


\begin{proof} \rm
By the hypothesis, there exists a~neighborhood $V$ of the point $\wx\cd$ such that $f_0(x(t_0),x(t_1))\ge
f_0(\wx(t_0),\wx(t_1))$ for any
trajectory $x\cd\in V$ admissible for system \eqref{12},\,\eqref{13}. In particular, this is true if $x\cd$ is an admissible trajectory for
system \eqref{2},\,\eqref{3}. Since by Theorem~\ref{T3} the function $\wx\cd$
lies in the closure of the set of admissible trajectories for system \eqref{2},\,\eqref{3},
it follows that $\wx\cd$~is a~local infimum for problem \eqref{1}--\eqref{3}.
\end{proof}

We now give the definition of a~sliding regime.

\begin{definition}\label{d4}\rm
By a \textit{sliding regime} for system \eqref{2},\,\eqref{3} we mean a~function lying in the closure
of the set of admissible trajectories for this system but which does not lie in this set.
\end{definition}

It is clear that if under the hypotheses of Corollary  \ref{co1} an optimal trajectory $\wx\cd$ in the convex
problem \eqref{1},\,\eqref{12},\,\eqref{13} is not an admissible trajectory for system
\eqref{2},\,\eqref{3}, then $\wx\cd$ is a~sliding regime for this system.

\begin{corollary}\label{sl2}
If a convex system \eqref{12},\,\eqref{13} is regular at a~point
$(\wx\cd,\wou\cd,\woa\cd)$ and if $\wx\cd$ is not an admissible trajectory for system
\eqref{2},\,\eqref{3}, then $\wx\cd$~is a~sliding regime for this system. On the other hand,
if $\wx\cd$~is a~sliding regime for system \eqref{2},\,\eqref{3} and if the set
$U$~is compact, then $\wx\cd$~is an admissible trajectory for the convex system
\eqref{12},\,\eqref{13} for $k=n+1$.
\end{corollary}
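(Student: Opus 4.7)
The plan is to handle the two implications of this corollary essentially as immediate consequences of Theorem~\ref{T3} and of the Filippov-theorem argument already carried out at the end of the proof of Theorem~\ref{T2}. The forward direction is a one-line deduction, while the converse direction requires running the same relaxation/closure argument that appeared in the final paragraph of the proof of Theorem~\ref{T2}.

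For the first implication, I would argue as follows. Suppose the convex system \eqref{12},\,\eqref{13} is regular at $(\wx\cd,\wou\cd,\woa\cd)$. Theorem~\ref{T3} then tells us directly that $\wx\cd$ lies in the closure of the set of admissible trajectories for system \eqref{2},\,\eqref{3}. Combining this with the hypothesis that $\wx\cd$ itself is \emph{not} admissible for \eqref{2},\,\eqref{3}, Definition~\ref{d4} immediately yields that $\wx\cd$ is a sliding regime. No further work is needed.

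For the converse implication, suppose $\wx\cd$ is a sliding regime for \eqref{2},\,\eqref{3} and that $U$ is compact. By Definition~\ref{d4}, there is a sequence $x_N\cd$ of admissible trajectories for \eqref{2},\,\eqref{3} converging to $\wx\cd$ in $C([t_0,t_1],\mathbb R^n)$. I would view each such $x_N\cd$ as admissible for the convex system \eqref{12},\,\eqref{13} with $k=n+1$ by setting $\alpha_1(t)\equiv 1$, $\alpha_i(t)\equiv 0$ for $i\geq 2$, and $u_1=u_N\cd$ where $u_N\cd$ is the control that certifies admissibility of $x_N\cd$ for \eqref{2},\,\eqref{3}. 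Now the set $Q = U^{n+1}\times\Sigma^{n+1}$ is compact because $U$ is compact, and the set $R(t,x)$ (the image of $Q$ under $\ov u\mapsto \sum_i\alpha_i\varphi(t,x,u_i)$) is convex by Carath\'eodory's theorem, exactly as established in the proof of Theorem~\ref{T1}. Hence Filippov's theorem (invoked in the final paragraph of the proof of Theorem~\ref{T2}) applies and guarantees that the limit $\wx\cd$ of the admissible-for-$\eqref{12},\eqref{13}$ sequence $x_N\cd$ is itself admissible for \eqref{12},\,\eqref{13} with $k=n+1$.

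There is no real obstacle here; the statement is essentially a repackaging of Theorem~\ref{T3} together with the Filippov-closure step already carried out. The only point requiring a little care is to notice that Filippov's theorem in the form used at the end of the proof of Theorem~\ref{T2} needs the compactness of $Q$ and the convexity of $R(t,x)$, both of which have already been verified under the compactness assumption on $U$, so the converse implication goes through verbatim.
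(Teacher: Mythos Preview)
Your proposal is correct and follows essentially the same approach as the paper: the first assertion is deduced directly from Theorem~\ref{T3} and Definition~\ref{d4}, and the second by observing that the Filippov-closure argument at the end of the proof of Theorem~\ref{T2} uses only the existence of a sequence of admissible trajectories converging to $\wx\cd$, which is exactly what the sliding-regime hypothesis provides. The paper's own proof is terser but identical in substance.
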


\begin{proof} \rm
The first assertion is a clear corollary to Theorem~\ref{T3}. The second assertion follows from the proof of
the last assertion of Theorem~\ref{T2}, because it involves only the existence of a~sequence
of admissible trajectories converging to~$\wx\cd$.
\end{proof}

\section*{Examples}

In this section we give examples illustrating the results of the first section.

\subsection*{Example 1}
Consider the following optimal control problem
\begin{multline}\label{exam2}
x_2(1)\to\inf,\quad \dot x_1=u(t),\quad \dot x_2=(x_1-f(t))^2+u^2(t),\quad u(t)\in U,\\
x_1(0)=x_2(0)=0,\quad x_1(1)=f(1),
\end{multline}
where a function $f\colon [0,1]\to\mathbb R$ is absolutely continuous, $f(0)=0$, $|\dot f(t)|\le1$
and $|\dot f(t)|\ne1$ for almost all $t\in[0,1]$ and $U=(-\infty,-1]\cup[1,+\infty)$.

To deal with this problem, we first follow the standard approach, namely, we try to find an optimal
trajectory with the help of the Pontryagin maximum principle. By examining the
conditions of this principle, we show that there is no optimal trajectory in this problem.
Next, using Theorem~\ref{T2}, we find a~function ``suspected'' for a~local infimum. And finally,
by an appeal to Theorem~\ref{T3} we show that the function thus obtained is a~local infimum in problem~\eqref{exam2}.

So, let us assume that $\wx\cd=(\wx_1\cd,\wx_2\cd)$ is an optimal trajectory in problem
\eqref{exam2}; that is, there exists $\wu\cd\in L_\infty([0,1])$ such that the pair
$(\wx\cd,\wu\cd)$ is admissible for this problem and delivers a~strong minimum in it. Then by the
Pontryagin maximum principle, there exist a~nonzero absolutely continuous vector function
$p\cd=(p_1\cd, p_2\cd)$ and a~number $\lambda_0\ge0$ such that
\begin{equation}\label{exm4}
\dot p_1(t)=-2 p_2(t)(\wx_1(t)-f(t)),\quad \dot p_2(t)=0,\quad p_2(1)=-\lambda_0
\end{equation}
and
\begin{equation}\label{exm5}
\max_{u\in U}(p_1(t)u+p_2(t)u^2)=p_1(t)\wu(t)+p_2(t)\wu_2(t)
\end{equation}
for almost all $t\in[0,1]$.

Let us show that this implies the equality $\wu\cd=\dot f\cd$, which is contradictory, because by the condition
$|\wu(t)|\ge1$ for almost all $t\in[0,1]$, and $|\dot
f(t)|<1$ on a~set of positive measure. This will prove that problem \eqref{exam2}
has no optimal trajectory.

From \eqref{exm4} it follows that  the function $p_2\cd$ is constant. This constant is nonzero, because
if $p_2=0$, then from \eqref{exm4} it follows that $p_1\cd$ is a~nonzero constant, and in this case
equality \eqref{exm5} is clearly impossible.  We set $p_2=-1$.

From \eqref{exm5} with $u=1$ we see that $p_1(t) -1\le p_1(t)\wu(t)-\wu^2(t)$ for almost all
$t\in[0,1]$, or what is the same
\begin{equation*}
p_1(t)(\wu(t)-1)\ge\wu^2(t)-1\ge0.
\end{equation*}
Similarly, from \eqref{exm5} for $u=-1$ we find that
\begin{equation*}
p_1(t)(\wu(t)+1)\ge\wu^2(t)-1\ge0.
\end{equation*}
From the first inequality we get
\begin{equation}\label{exm6}
p_1(t)(\wu(t)-\dot f(t))=p_1(t)(\wu(t)-1)+p_1(t)(1-\dot f(t))
\ge p_1(t)(1-\dot f(t))
\end{equation}
and from the second one, we find that
\begin{equation}\label{exm7}
p_1(t)(\wu(t)-\dot f(t))\ge p_1(t)(-1-\dot f(t)).
\end{equation}
In turn, from \eqref{exm6} and \eqref{exm7} we find that, for almost all $t\in[0,1]$,
\begin{equation}\label{exm8}
p_1(t)(\wu(t)-\dot f(t))\ge0.
\end{equation}
Indeed, if  on some set of positive measure the function $p_1\cd$
is nonnegative, then in view of the properties of~$f\cd$ inequality \eqref{exm8} readily follows from
\eqref{exm6}, and if it is nonpositive, then \eqref{exm8} follows from \eqref{exm7}.

From \eqref{exm8},  \eqref{exm4} and from the boundary conditions in problem \eqref{exam2} it follows that
($\wu\cd=\dot \wx_1\cd$)
\begin{multline*}
0\le\int_0^1p_1(t)(\dot\wx_1(t)-\dot f(t))\,dt=p_1(t)(\wx_1(t)-f(t))|_0^1\\- \int_0^1\dot
p_1(t)(\wx_1(t)-f(t))\,dt=-2\int_0^1(\wx_1(t)-f(t))^2\,dt\le0,
\end{multline*}
that is, $\wx_1(t)=f(t)$ for all $t\in [t_0,t_1]$, and therefore, $\wu(t)=\dot\wx_1(t)=\dot f(t)$
for almost all $t\in[0,1]$. But, as was already noted, this is impossible, and hence problem
\eqref{exam2} has no optimal trajectory.

So, the Pontryagin maximum principle gives nothing for the problem under consideration.
Let us employ Theorem~\ref{T2} to find a~function delivering a~local
infimum in problem \eqref{exam2}. Applying this theorem with $k=2$, we conclude that if
$\wx\cd=(\wx_1\cd,\wx_2\cd)$ is a~local infimum, then, for any measurable function $\widehat\alpha\cd$, $0\le\alpha(t)\le1$,
for almost all $t\in[t_0,t_1]$ and any functions $\wu_i\cd\in L_\infty([0,1])$, $\wu_i(t)\in U$
for almost all $t\in[t_0,t_1]$, $i=1,2$, such that
\begin{equation}\label{exam3}
\begin{aligned}
\dot{\wx}_1(t)&=(1-\widehat \alpha(t))\wu_1(t)+\widehat\alpha(t)\wu_2(t),\\
\dot{\wx}_2(t)&=(1-\widehat \alpha(t))\wu^2_1(t)+\widehat\alpha(t)\wu^2_2(t)+
(\wx_1(t)-f(t))^2,
\end{aligned}
\end{equation}
$\wx_1(0)=\wx_2(0)=0$ and $\wx_1(1)=f(1)$, there exist a~nonzero absolutely continuous
vector function $p\cd$ and a~number $\lambda_0\ge0$ such that
\begin{equation}\label{exam4}
\dot p_1(t)=-2 p_2(t)(\wx_1(t)-f(t)),\quad \dot p_2(t)=0,\quad p_2(1)=-\lambda_0
\end{equation}
and moreover, for almost all $t\in[0,1]$,
\begin{equation}\label{exam5}
\max_{u\in
U}(p_1(t)u+p_2(t)((\wx_1(t)-f(t))^{2}+u^2)=p_1(t)\dot\wx_1(t)+p_2(t)\dot\wx_2(t).
\end{equation}

It is seen that relations \eqref{exam4} coincide with \eqref{exm4} and are independent of
$\wu_1\cd$, $\wu_2\cd$ and $\widehat\alpha\cd$.
In this case, our problem becomes simpler: one needs to find functions $\wx_1\cd$ and $\wx_2\cd$
satisfying the convex system \eqref{exam3} (at least for one tuple $\wu_1\cd$, $\wu_2\cd$ and
$\widehat\alpha\cd$) and such that there exist a~nonzero absolutely continuous vector function
$p\cd=(p_1\cd, p_2\cd)$ and a~number $\lambda_0\ge0$ satisfying
\eqref{exam4} and~\eqref{exam5}.

From the above relations, repeating in essence the previous arguments in the proof of
inequalities \eqref{exm6} and \eqref{exm7}, we get the inequalities
\begin{equation}\label{exam6n}
p_1(t)(\dot\wx_1(t)-\dot f(t))\ge p_1(t)(1-\dot f(t))
\end{equation}
and
\begin{equation}\label{exam7n}
p_1(t)(\dot\wx_1(t)-\dot f(t))\ge p_1(t)(-1-\dot f(t))
\end{equation}
for almost all $t\in[0,1]$. Using these inequalities we find, as before, that
\begin{equation*}
p_1(t)(\dot\wx_1(t)-\dot f(t))\ge0
\end{equation*}
for almost all $t\in[0,1]$.

Further, repeating now verbatim the above arguments, we see that  $\wx_1(t)=f(t)$ for all
$t\in [t_0,t_1]$. Now from \eqref{exam4} it follows that the function $p_1\cd$ is constant.
Since $|\dot f(t)|\ne1$ for almost all $t\in[0,1]$, the equalities  $\dot f(t)=1$ and $\dot
f(t)=-1$ are impossible for almost all $t\in[0,1]$, and hence from inequalities \eqref{exam6n} and
\eqref{exam7n} we get $p_1=0$.

We have $\wx_1(t)=f(t)$, $t\in[0,1]$, and hence from \eqref{exam5} for  $u=1$ we get the inequality
$\dot \wx_2(t)\le 1$ for almost all $t\in[0,1]$. On the other hand, by the second of
\eqref{exam3} and the definition of the set~$U$, we conclude that $\dot \wx_2(t)\ge 1$ for almost all
$t\in[0,1]$; that is, $\dot \wx_2(t) = 1$ a.e.\ on $[0,1]$, and so, $\wx_2(t) = t$.

So, for any functions $\wx\cd$, $\wu_1\cd$, $\wu_2\cd$
and $\widehat\alpha\cd$ admissible for convex system, we get a~unique trajectory $\wx(t) =(f(t),t)$,
$t\in[0,1]$, which is suspicious for a~local infimum in problem \eqref{exam2} and which is
not admissible for this problem (for otherwise there should exist a~control
$u\cd$ such that $|u(t)|\ge1$ and $\dot\wx_1(t)=\dot f(t)=u(t)$ for almost all
$t\in[0,1]$, but this is impossible, as was already pointed out).
Moreover, as $\wu_1\cd$,
$\wu_2\cd$ and $\widehat\alpha\cd$ one can take $\wu_1(t)=1$, $\wu_2(t)=-1$ and
$\widehat\alpha(t)=(1-\dot f(t))/2$ for almost all $t\in[0,1]$.

Let us show that the trajectory thus found is a~global infimum in problem  \eqref{exam2}.
To this end, we employ Theorem~\ref{T3}. The regularity of the convex system \eqref{exam3} at
a~point  $(\wx\cd$, $\wu_1\cd$, $\wu_2\cd$, $\widehat\alpha\cd$) means that the relations
\eqref{exam4} and \eqref{exam5} with $\lambda_0=0$ are satisfied only by the zero
vector function $p\cd=(p_1\cd,p_2\cd)$. But this is indeed so: it is clear that  $p_2\cd=0$,
and moreover, that $p_1\cd=0$ was already proved above.

Therefore, by Theorem~\ref{T3} the trajectory $\wx\cd$ lies in the closure of the admissible trajectories for problem  \eqref{exam2}.
Next, for any admissible trajectory $x\cd=(x_{1}\cd, \,x_{2}\cd)$ we have
\begin{equation*}
x_2(1)=\int_{0}^1((x_1(t)-f(t))^2+u^2(t))\,dt\ge1=\wx_2(1)
\end{equation*}
and hence  $\wx\cd$ is a~global infimum for problem \eqref{exam2}.

Note that the trajectory $\wx\cd$ is a sliding regime for the system specifying the constraints in problem~\eqref{exam2}.

One can easily construct a~sequence of admissible trajectories
$x_n\cd=(x_{1n}\cd,\, x_{2n}\cd)$ for problem \eqref{exam2} such that $x_{2n}(1)\to 1$ as $n\to\infty$. Let $n\in\mathbb N$.
We split the interval $[0,1]$ into~$n$ intervals:
$[s/n,\,(s+1)/n]$, $s=0,\ldots,n-1$. We set $b_n(s)=f(s/n)-(s/n)$ and
$c_n(s)=f((s+1)/n)+(s+1)/n$, $s=0,\ldots,n-1$. It can be easily checked that
$((c_n(s)-b_n(s))/2)\in [s/n,(s+1)/n]$, $s=0,\ldots,n-1$.

Consider the sequence $x_{1n}\cd$ defined by
\begin{equation*}
x_{1n}(t)=\begin{cases} t+b_n(s),\qquad t\in[s/n, \,(c_n(s)-b_n(s))/2],\\[10pt]
-t+c_n(s),\quad \ t\in[(c_n(s)-b_n(s))/2, \,(s+1)/n],
\end{cases}
\end{equation*}
$s=0,\ldots,n-1$.

This is a broken line (with slopes $\pm1$ between its segments and which interpolates $f\cd$ at the points
$s/n$, $s=0,\ldots,n$), which converges uniformly to~$f\cd$. We set $u_n\cd=\dot x_{1n}\cd$ and
\begin{equation*}
x_{2n}(t)=\int_{0}^t((x_{1n}(\tau)-f(\tau))^2+u_n^2(\tau))\,d\tau,\quad t\in[t_0,t_1]
\end{equation*}
Since $|u_{n}(t)|=1$ for almost all $t\in[t_0,t_1]$, the pairs  $(x_{1n}\cd,\,x_{2n}\cd)$,
$n\in\mathbb N$, are admissible for problem \eqref{exam2}. Moreover, it is clear that $x_{2n}(1)\to 1$ as $n\to\infty$.

\subsection*{Example 2}
Here we give an example when from Theorem~\ref{T2} one can derive more information about
an optimal process in comparison with that delivered by the Pontryagin maximum principle.

Let $g\colon \mathbb R\to\mathbb R$ and $U\subset \mathbb R$. Consider the problem
\begin{equation}\label{z1}
\int_{0}^{1}x g(u)\,dt\to\inf,\quad \dot x=u(t),\quad x(0)=x(1)=0,\quad u(t)\in U.
\end{equation}
Assume that the function $g$ is continuous, $g(0)=0$, and $0\in{\rm int}\,U$.

In an equivalent form, this problem reads as
\begin{multline}\label{z2}
x_2(1)-x_2(0)\to\inf,\quad \dot x_1=u(t),\quad\dot x_2=x_1g(u(t)),\\
x_1(0)=x_1(1)=0,\quad u(t)\in U.
\end{multline}
Let us show that the equilibrium point $\wx_1\cd=\wx_2\cd=0$, $\wu\cd=0$ satisfies
the Pontryagin maximum principle for problem \eqref{z2}; hence this point is ``suspicious'' from the
viewpoint of this principle for a~strong minimum in this problem. Indeed, the adjoint equation and
the maximum condition at this point are equivalent to the relations
$$
\dot p_1(t)=-p_2(t)g(0)=0,\qquad \dot p_2(t)=0, \, \quad p_2(0)=-p_2(1)=\lambda_0
$$
and
$$
p_1(t)u\le0,\,\,\,\forall\,u\in U.
$$
It is clear that $p_1\cd$ is a~constant, and  moreover, since $0\in{\rm int}\,U$, this constant is zero.
The function $p_2\cd$ is also constant. Setting, for example, $p_2=-\lambda_0=-1$, we conclude that the
point $\wx_1\cd=\wx_2\cd=0$, $\wu\cd=0$ satisfies the Pontryagin maximum principle.

We now employ Theorem~\ref{T2} to show that if a~point $\wx_1\cd=\wx_2\cd=0$, $\wu\cd=0$
is a~point of strong minimum for problem \eqref{z1}, then certain additional meaningful conditions should be satisfied.
Namely, the following result holds.

\begin{proposition}
If a point $\wx_1\cd=\wx_2\cd=0$, $\wu\cd=0$  delivers a~strong minimum for problem
\eqref{z1}, then
the function $u\mapsto g(u)$ is linear on some interval with center at the origin.
\end{proposition}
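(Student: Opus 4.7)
The point $\wx\cd\equiv 0$ (with $\wu\cd\equiv 0$) is admissible for~\eqref{z2} and by assumption a~strong minimum, hence a~local infimum in the sense of Definition~\ref{d3}. The Pontryagin maximum principle (i.e., Theorem~\ref{T2} with $k=1$) yields no useful information, as already verified above. The plan is to apply Theorem~\ref{T2} with $k=2$ and a~cleverly chosen convex decomposition of the trivial drift, and then extract a~new relation from the adjoint and maximum conditions.

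Concretely, for each pair $a,b>0$ small enough that $[-b,a]\subset U$ (possible since $0\in\mathrm{int}\,U$), take the constant controls $\wu_1\cd\equiv a$, $\wu_2\cd\equiv -b$ and the constant weights $\widehat\alpha_1\cd\equiv b/(a+b)$, $\widehat\alpha_2\cd\equiv a/(a+b)$. Since $\widehat\alpha_1 a+\widehat\alpha_2(-b)=0$ and $\wx_1\cd\equiv 0$, the triple $(\wx,\wou,\woa)$ is admissible for the corresponding convex system~\eqref{12},\,\eqref{13}. Theorem~\ref{T2} then produces a~nonzero tuple $(\lambda_0,\lambda_g)\in\mathbb R_+\times(\mathbb R^2)^*$ and a~vector function $p\cd=(p_1\cd,p_2\cd)$ satisfying 1)--4). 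Since at $x=0$ the only nonzero entry of $\varphi_x(t,0,u)$ is $\partial(x_1g(u))/\partial x_1=g(u)$, the adjoint equation gives $\dot p_2=0$ and $\dot p_1(t)=-p_2(t)\bigl(\widehat\alpha_1 g(a)+\widehat\alpha_2 g(-b)\bigr)$; the transversality condition applied to the cost $x_2(1)-x_2(0)$ fixes $p_2\cd\equiv -\lambda_0$.

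Because $\dot{\wx}\equiv 0$, the maximum condition 4) reduces to $p_1(t)u\le 0$ for all $u\in U$ and a.e.\ $t$; as $0\in\mathrm{int}\,U$, this forces $p_1\cd\equiv 0$, so that $\lambda_0\bigl(b\,g(a)+a\,g(-b)\bigr)=0$. It remains to rule out $\lambda_0=0$: in that case $p\cd\equiv 0$ and the transversality conditions $p(t_0)=\widehat g_{\zeta_1}^*\lambda_g$, $p(t_1)=-\widehat g_{\zeta_2}^*\lambda_g$, read off in the $x_1$-slots where the Jacobian of the endpoint constraint $g(x(t_0),x(t_1))=(x_1(t_0),x_1(t_1))$ is diagonal, force $\lambda_g=0$, contradicting the nontriviality clause of Theorem~\ref{T2}. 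Hence $b\,g(a)+a\,g(-b)=0$, i.e.\ $g(a)/a=-g(-b)/b$ for all sufficiently small $a,b>0$. The two sides depend on independent variables, so both equal a~common constant $c\in\mathbb R$, which gives $g(u)=cu$ on a~symmetric interval about the origin.

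The main obstacle I anticipate is the nontriviality step: it corresponds exactly to the regular case of Theorem~\ref{T3} and must be excluded by inspection of the endpoint transversality conditions. Once it is handled, the rest is a~direct computation, the decisive idea being the nontrivial two-point convex decomposition of the zero drift that is inaccessible to the ordinary ($k=1$) maximum principle.
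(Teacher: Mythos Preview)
Your proposal is correct and follows essentially the same route as the paper: apply Theorem~\ref{T2} with $k=2$ using constant controls $a,-b$ and weights $b/(a+b),\,a/(a+b)$ so that the convexified drift vanishes at $\wx\equiv0$; the maximum condition forces $p_1\equiv0$, nontriviality forces $p_2\ne0$, and the adjoint equation then yields the relation $b\,g(a)+a\,g(-b)=0$, from which linearity near the origin follows. The only cosmetic difference is that the paper parametrizes by $u_1\in[-\varepsilon,0)$, $u_2\in(0,\varepsilon]$ and writes the relation as $u_2 g(u_1)=u_1 g(u_2)$, deducing linearity by a short case analysis rather than your ``independent variables'' remark.
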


\begin{proof} \rm
We apply Theorem \ref{T2} at the point $\wx_1\cd=\wx_2\cd=0$. This means, in particular, that
for any $u_i\in U$, $i=1,2$, and $\alpha \in [0,1]$ such that
\begin{equation}\label{z3}
0=(1-\alpha)u_1+\alpha u_2, \quad 0=(1-\alpha) 0g(u_1)+\alpha 0g(u_2)
\end{equation}
there exsit a~vector function $p\cd=(p_1\cd, p_2\cd)$ and a~number  $\lambda_0\ge0$
satisfying the relations
\begin{equation}\label{z4}
\dot p_1(t)=-p_2(t)((1-\alpha)g(u_1)+\alpha g(u_2)),\quad \dot p_2(t)=0, \quad
p_2(0)=p_2(1)=-\lambda_0
\end{equation}
and
$$
p_1(t)u\le0,\,\,\,\forall\,u\in U.
$$
We have $0\in{\rm int}\,U$, and hence this implies, as before, that $p_1\cd$ is the
constantly zero. As a~result, $p_2\ne0$, for otherwise all the Lagrange multipliers would be zero.

Let $\varepsilon>0$ be such that $[-\varepsilon,\varepsilon]\subset U$. In this case it is clear
that any $u_1\in[-\varepsilon,0)$,  $u_2\in(0,\varepsilon]$   and
$\alpha=u_1/(u_1-u_2)\in(0,1)$ would satisfy the equations from \eqref{z3}. Now from the first equality in~\eqref{z4} we get
the relation
\begin{equation}\label{z5}
u_2g(u_1)=u_1g(u_2),
\end{equation}
which holds for any $u_1\in[-\varepsilon,0)$ and $u_2\in(0,\varepsilon]$.

Making   $u_1=-\varepsilon$, $u_2=\varepsilon$ in \eqref{z5},  we see that
\begin{equation}\label{z6}
g(-\varepsilon)=-g(\varepsilon).
\end{equation}

Let $u\in[-\varepsilon,\varepsilon]$. If $u<0$, then from \eqref{z5} with $u_1=u$ and
$u_2=\varepsilon$ we get
$$
g(u)=\frac{g(\varepsilon)}{\varepsilon}\,u.
$$

If $u>0$, then by another appeal to \eqref{z5} with $u_2=u$ and $u_1=-\varepsilon$ and taking into account \eqref{z6}
we obtain
$$
g(u)=-\frac{g(-\varepsilon)}{\varepsilon}\,u=\frac{g(\varepsilon)}{\varepsilon}\,u.
$$

If $u=0$, then by the hypothesis $g(0)=0$, and hence
$$
g(u)=\frac{g(\varepsilon)}{\varepsilon}\,u, \quad\forall\,u\in[-\varepsilon,\varepsilon].
$$
\end{proof}

So, Theorem~\ref{T2} strengthens in general the Pontryagin maximum principle.

\subsection*{Example 3} This examples shows that the condition that the set~$U$
be compact in the last assertion of Theorem~\ref{T2} and in the second part of Corollary \ref{sl2}
is essential.

Consider the control system
\begin{equation}\label{zz}
\dot x=u(t),\quad u(t)\in\mathbb R,\quad x(0)=0,\quad x(1)=1
\end{equation}
and construct the following sequence of functions
\begin{equation}\label{zz1}
x_n(t)=
\begin{cases}
\sqrt{n}\,t,& t\in[0,1/n],\\[5pt]
\sqrt{t},& t\in[1/n,1].
\end{cases}
\end{equation}
It is clear that this is a sequence of absolutely continuous functions which are admissible for system
\eqref{zz} and which converge uniformly on $[0,1]$ to the function $t\mapsto\sqrt{t}$,
whose derivative, clearly, does not lies in $L_\infty([0,1])$.

It follows that   the equality
\begin{equation*}
\dot x(t)=(1-\alpha(t))u_1(t)+\alpha(t) u_2(t),
\end{equation*}
where $x(t)=\sqrt{t}$, $t\in[0,1]$, cannot be satisfied for almost all $t\in[0,1]$
for any measurable function $\alpha\cd$ for which
$0\le\alpha(t)\le1$ for almost all $t\in[0,1]$ and for any functions $u_i\cd\in L_\infty([0,1])$, $i=1,2$.
This means that the
trajectory $x\cd$, which lies in the closure of the admissible trajectories for system \eqref{zz},
is not admissible for any convex extension of this system for $k=2$.

\subsection*{Example 4} Here our aim is to show that the regularity condition
in Corollary~\ref{co1} and in the first part of Corollary~\ref{sl2} is essential.

Consider the problem
\begin{multline}\label{zz2}
f_0(x_1(1))=x_1^2(1)\to\inf,\quad \dot x_1=u(t),\quad \dot x_2=4u^2(t)-3u^3(t),\\ \dot
x_3=(x_1-x_2)^2,\quad x_1(0)=x_2(0)=x_3(0)=x_3(1)=0,\\ u(t)\in U=\{-1,\, 1/3,\, 1,\, 3\}.
\end{multline}
If a trajectory $x\cd=(x_1\cd, x_2\cd, x_3\cd)$ is admissible for the system specifying
the constraints in this problem, then from the third differential equation and the boundary
conditions it follows that $x_1\cd=x_2\cd$. Now from the first and second differential equations
we conclude that, for all $t\in[0,1]$, the equality holds
$\int_0^tu(\tau)\,d\tau=\int_0^t(4u^2(\tau)-3u^3(\tau))\,d\tau$ for some  $u\cd\in
L_\infty([0,1])$ for which $u(t)\in U$ for almost all $t\in[0,1]$. It follows that
$u(t)=4u^2(t)-3u^3(t)$ for almost all $t\in[0,1]$. The control $u\cd$ cannot assume
values not lying in~$U$ on a~set of positive measure. Hence from the last equality it follows that $u(t)$
is either  $1/3$ or~$1$ for almost all $t\in[0,1]$. This implies, in particular, that
$x_1(1)=\int_0^1u(t)\,dt\ge1/3$ for any admissible trajectory.

Let us show that the zero trajectory $\wx\cd=(0,0,0)$  delivers a~global minimum for the convex extension of
problem \eqref{zz2} with $k=3$. Indeed, a~direct verification shows that the triple $(\wx\cd,\,\widehat{\ov u}\cd,\,\widehat{\ov
\alpha}\cd)$, where $u_1\cd=-1$, $u_2\cd=3$, $u_3\cd=1/3$,  $\alpha_1\cd=3/8$,
$\alpha_2\cd=1/16$, and $\alpha_3\cd=9/16$, is admissible for this extension and that the zero
delivers a~global minimum $f_0$. But this trajectory is neither a~local infimum nor a~sliding regime for problem \eqref{zz2},
because by the above estimate $x_1(1)\ge1/3$ this trajectory does not lie in the closure
of the set of admissible trajectories for this problem.

So, the assertions of Corollary~\ref{co1} and the first part of Corollary~\ref{sl2}
are not true for the case under consideration. This can be explained by the fact that the regularity
condition is violated\,---\,namely, the convex extension of the system specifying the constraints in problem
\eqref{zz2} is not regular at the point $\wx\cd$. Indeed, by definition, the regularity
is equivalent to saying that only the zero vector function
$p\cd=(p_1\cd, p_2\cd, p_3\cd)$ can satisfy the relations
\begin{equation*}
\dot p_1\cd=\dot p_2\cd=\dot p_3\cd=0,\quad p_1(1)=p_2(1)=0
\end{equation*}
and
\begin{equation*}
p_3(t)0\le 0.
\end{equation*}
But this implies that $p_1\cd$ and $p_2\cd$ are zero constants and as $p_3\cd$
one can take any nonzero constant.

Note that in this case there exists an optimal trajectory\,---\,namely, putting
$u\cd=1/3$ we see that  $\wx_1(1)=1/3$.

\section*{Appendix}

In this section we prove the generalized implicit function theorem and establish four lemmas:
the inverse function lemma, the lemma on equation in variations, and two approximation lemmas.

We introduce the following definition. Let $X$ and $Y$ be normed spaces, $\Sigma$
be a~topological space, and let~$M$ be a~nonempty subset of~$X$. We denote by
$C^1_x(M\times\Sigma, Y)$ the restriction to $M\times\Sigma$ of the set
of mappings $F\colon X\times\Sigma\to Y$ which are continuous together with its derivative with respect to~$x$
and for which the norm
$$
\|F\|_{C^1_x(M\times\Sigma, Y)}=\sup\limits_{(x,\sigma)\in
M\times\Sigma}\|F(x,\sigma)\|_Y+\sup\limits_{(x,\sigma)\in
M\times\Sigma}\|F_x(x,\sigma)\|
$$
is finite.

\begin{theorem}[\rm the generalized implicit function theorem]
Let $X$ and $Y$ be Banach spaces, $\Sigma$~be a~topological space,
$\ws\in\Sigma$, $V$~be a~neighborhood of a~point $\wx\in X$, $Q$~be a~convex closed
subset of~$X$, $\wF\in C^1_x((V\cap Q)\times\Sigma,\, Y)$,  $\wF(\wx,\ws)=0$, and let
the operator $\wF_x(\wx,\ws)$ be invertible.

Then there exist neighborhoods  $V_0'\subset V_0\subset V$ of the point $\wx$, a~neighborhood $U_0$
of~$\ws$, and a~neighborhood $W_0$ of the mapping~$\wF$ such that, for $F\in W_0$ for which
$x-\wF^{-1}_x(\wx,\ws)F(x,\sigma)\in Q$ for all $(x,\sigma)\in (V'_0\cap
Q)\times U_0$, there exists a~continuous mapping $g_F\colon U_0\to V_0\cap Q$ such that
\begin{equation}\label{i1}
F(g_F(\sigma),\sigma)=0\,\,\, \text{and}\,\,\, \|g_F(\sigma)-x\|_X\le
2\|(\wF_x(\wx,\ws))^{-1}\|\|F(x,\sigma)\|_Y
\end{equation}
for all $(x,\sigma)\in (V'_0\cap Q)\times U_0$. Moreover, the equality $F(x,\sigma)=0$ on
$(V_0\cap Q)\times U_0$ is possible only if $x=g_F(\sigma)$.
\end{theorem}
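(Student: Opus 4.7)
The natural strategy is to recast the equation $F(x,\sigma)=0$ as a fixed-point problem by setting $L=(\wF_x(\wx,\ws))^{-1}$ and
\[
T_F(x,\sigma) = x - LF(x,\sigma),
\]
so that fixed points of $T_F(\cdot,\sigma)$ lying in $Q$ are exactly the zeros of $F(\cdot,\sigma)$ in $Q$. The hypothesis $x-LF(x,\sigma)\in Q$ appearing in the statement is precisely the condition that $T_F$ carries $V_0'\cap Q$ into $Q$, which is what permits iteration to stay in $Q$.

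I would first arrange a uniform contraction bound. Since $L\wF_x(\wx,\ws)=I$, continuity of $\wF_x$ on $(V\cap Q)\times\Sigma$ together with the definition of the $C^1_x$-norm lets me choose a neighborhood $V_0\subset V$ of $\wx$ and a $C^1_x$-neighborhood $W_0$ of $\wF$ so that
\[
\|I - LF_x(x,\sigma)\| \le \tfrac12 \quad \text{for all } (x,\sigma)\in (V_0\cap Q)\times\Sigma,\ F\in W_0.
\]
By convexity of $Q$, the mean value inequality along the segment joining two points of $V_0\cap Q$ yields $\|T_F(x,\sigma)-T_F(x',\sigma)\|\le\tfrac12\|x-x'\|$. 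Next I take an open ball $V_0'=\{x\in X:\|x-\wx\|<r'\}\subset V_0$ and a neighborhood $U_0$ of $\ws$, shrinking $W_0$ if needed, so that $\|LF(\wx,\sigma)\|<r'/2$ for all $\sigma\in U_0$, $F\in W_0$; this is possible because $\wF(\wx,\ws)=0$ and $F$ is uniformly close to $\wF$ and continuous in $\sigma$. Then the triangle inequality
\[
\|T_F(x,\sigma)-\wx\|\le\|T_F(x,\sigma)-T_F(\wx,\sigma)\|+\|LF(\wx,\sigma)\|\le\tfrac12 r'+\tfrac12 r'=r'
\]
combines with the standing hypothesis to show that $T_F(\cdot,\sigma)$ sends $V_0'\cap Q$ into itself. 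Hence for each $\sigma\in U_0$ the iteration $x_{n+1}=T_F(x_n,\sigma)$ starting from any $x_0=x\in V_0'\cap Q$ stays in $V_0'\cap Q$ and, by the contraction, converges to a point $g_F(\sigma)\in V_0\cap Q$ with $F(g_F(\sigma),\sigma)=0$; summing the geometric series $\sum_{n\ge 0}2^{-n}\|LF(x,\sigma)\|$ yields precisely the quantitative estimate $\|g_F(\sigma)-x\|\le 2\|L\|\cdot\|F(x,\sigma)\|$ demanded in \eqref{i1}.

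Continuity of $g_F$ would then follow from the standard ``moving contraction'' estimate
\[
\|g_F(\sigma)-g_F(\sigma')\|\le\|L\|\cdot\|F(g_F(\sigma'),\sigma)-F(g_F(\sigma'),\sigma')\|+\tfrac12\|g_F(\sigma)-g_F(\sigma')\|,
\]
which, together with joint continuity of $F$, gives $g_F(\sigma)\to g_F(\sigma')$ as $\sigma\to\sigma'$; uniqueness in $V_0\cap Q$ is immediate because the contraction has at most one fixed point. The main obstacle I anticipate is not the contraction itself but the bookkeeping that couples the four parameters $V_0'$, $V_0$, $U_0$, $W_0$: the radius of $V_0'$, the smallness of $\|F(\wx,\sigma)\|$ on $U_0$, and the $C^1_x$-closeness imposed by $W_0$ must be chosen consistently so that the single hypothesis on $V_0'\cap Q$ propagates through every iterate, simultaneously guaranteeing $T_F$-invariance and membership in $Q$ at each step.
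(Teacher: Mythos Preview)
Your approach is essentially the paper's: both set $\Lambda=\wF_x(\wx,\ws)$ and iterate $x_{n+1}=x_n-\Lambda^{-1}F(x_n,\sigma)$ (the paper calls this the modified Newton method), derive the half-Lipschitz bound $\|F(x,\sigma)-F(x',\sigma)-\Lambda(x-x')\|_Y\le\frac{1}{2\|\Lambda^{-1}\|}\|x-x'\|_X$ via the mean value theorem on the convex set $V_0\cap Q$ (your contraction estimate is this inequality composed with $\Lambda^{-1}$), and sum the resulting geometric series to obtain the constant $2\|\Lambda^{-1}\|$. One slip to correct: the bound $\|I-LF_x(x,\sigma)\|\le\tfrac12$ cannot be arranged for all $\sigma\in\Sigma$, only for $\sigma$ in a neighborhood of $\ws$, since continuity of $\wF_x$ is joint in $(x,\sigma)$; as you introduce $U_0$ two lines later this is a bookkeeping error rather than a structural gap.
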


\begin{proof} \rm
For brevity, we set $\Lambda=\widehat F_{x}(\wx,\ws)$ and write $C^1_x$ in place of
$C^1_x((V\cap Q)\times\Sigma, Y)$. The mapping $(x,\sigma)\mapsto \widehat F_x(x,\sigma)$
is continuous at the point $(\wx,\ws)$, and hence there exist $0<\delta\le1$ such that
$U_X(\wx,\delta)\footnote{$U_X(\wx,\delta)$ denotes the open ball in a~normed space~$X$
with center at~$\wx$ and of radis $\delta$.}\subset V$ and a~neighborhood~$U$
of the point~$\ws$ for which $\|\wF_x(x,\sigma)-\Lambda\|\le1/8\|\Lambda^{-1}\|$ for all
$(x,\sigma)\in U_X(\wx,\delta)\times U$.

We set $V_0=U_X(\wx,\delta)$, and choose neighborhoods $V'_0$, $U_0$ and $W_0$ so that
$V'_0\subset U_X(\wx,\delta/2)$, $U_0\subset U$, and moreover,
$\|\wF(x,\sigma)\|_Y<\delta/8\|\Lambda^{-1}\|$ if $(x,\sigma)\in V'_0\times U_0$,
$W_0=U_{C^1_x}(\wF,\delta/8\|\Lambda^{-1}\|)$.

Let $F\in W_0$ and $x-\wF^{-1}_x(\wx,\ws)F(x,\sigma)\in Q$ for all $(x,\sigma)\in
(V'_0\cap Q)\times U_0$. We claim that, for any $x,x'\in V_0\cap Q$ and $\sigma\in U_0$,
\begin{equation}\label{j0}
\|F(x,\sigma)-F(x',\sigma)-\Lambda(x-x')\|_{Y} \le \frac1{2\|\Lambda^{-1}\|}\|x-x'\|_{X}.
\end{equation}

Indeed, first, we have  ($\delta\le1$)
\begin{multline}\label{j1}
\|F_x(x,\sigma)-F_x(\wx,\ws)\|\le\|F_x(x,\sigma)-\wF_x(x,\sigma)\|+\|\wF_x(x,\sigma)-\wF_x(\wx,\ws)\|\\
+\|F_x(\wx,\ws)-\wF_x(\wx,\ws)\|<\frac3{8\|\Lambda^{-1}\|}\,.
\end{multline}
The sets $V_0$ and $Q$ are convex, and hence if $x,x'\in V_0\cap Q$, then
$x_\theta=(1-\theta)x+\theta x'\in V_0\cap Q$ for $\theta\in[0,1]$. By the mean value theorem,
as applied to the mapping $x\to F(x,\sigma)-F_x(\wx,\ws)x$, where $\sigma\in U_0$,
we get, by \eqref{j1} and in view of the choice of~$F$, that
\begin{multline*}
\|F(x,\sigma)-F(x',\sigma)- \Lambda(x-x')\|_{Y}\le\|F(x,\sigma)-F(x',\sigma)\\-
F_x(\wx,\ws)(x-x')\|_{Y}+\|F_x(\wx,\ws)(x-x')-\Lambda(x-x')\|_Y\\\le
\sup_{\theta\in[0,1]}\|F_x(x_\theta,\sigma)-F_x(\wx,\ws)\|
\|x-x'\|_{X}+\|F_x(\wx,\ws)-\Lambda\|\|x-x'\|_X\\\le\frac3{8\|\Lambda^{-1}\|}\|x-x'\|_{X}+
\frac1{8\|\Lambda^{-1}\|}\|x-x'\|_{X}=\frac1{2\|\Lambda^{-1}\|}\|x-x'\|_{X},
\end{multline*}
thereby proving  inequality \eqref{j0}.

Let $(x,\sigma)\in (V'_0\cap Q)\times U_0$. Considering the sequence (the modified Newton method)
\begin{equation}\label{i3}
x_n=x_{n-1}-\Lambda^{-1}F(x_{n-1},\sigma),\quad n\in\mathbb N,\quad x_0=x,
\end{equation}
we claim that this sequence lies in $U_X(\wx,\delta)\cap Q$ and is a~Cauchy sequence.
The first claim is proved by induction. It is clear that $x_0\in U_X(\wx,\delta)\cap Q$. Let $x_k\in
U_X(\wx,\delta)\cap Q$, $1\le k\le n$. We need to show that $x_{n+1}\in U_X(\wx,\delta)\cap Q$.

Applying the operator $\Lambda$ to both sides of \eqref{i3}, we find that
\begin{equation}\label{i4}
\Lambda(x_n-x_{n-1})=-F(x_{n-1},\sigma).
\end{equation}
Using in succession \eqref{i3}, \eqref{i4}, \eqref{j0} and then iterating, we find that
\begin{multline}\label{i5}
\|x_{n+1}-x_n\|_X\le \|\Lambda^{-1}\|\|F(x_n,\sigma)\|_Y=\|\Lambda^{-1}\|
\|F(x_n,\sigma)-F(x_{n-1},\sigma)-\\
-\Lambda(x_n-x_{n-1})\|_Y\le\frac 12\|x_n-x_{n-1}\|_X\le {\ldots}\le\frac
1{2^n}\|x_1-x\|_X.
\end{multline}
Next, employing the triangle inequality, using \eqref{i5}, \eqref{i3}, and
taking into account the formula for the sum of a geometric progression, we have, since $F\in W_0$,
\begin{multline}\label{i6}
\|x_{n+1}-\wx\|_X\le\|x_{n+1}-x\|_X+\|x-\wx\|_X \le
\|x_{n+1}-x_n\|_X\\+\ldots+\|x_1-x\|_X+\|x-\wx\|_X\le \left(\frac 1{2^n}+\frac
1{2^{n-1}}+\ldots+1\right)\|x_1-x\|_X\\+\|x-\wx\|_X<
2\|\Lambda^{-1}\|\|F(x,\sigma)\|_Y+\|x-\wx\|_X\\<2\|\Lambda^{-1}\|\|F(x,\sigma)-\wF(x,\sigma)\|_Y
+2\|\Lambda^{-1}\|\|\wF(x,\sigma)\|_Y+\|x-\wx\|_X\\<\frac{\delta}{4}+\frac{\delta}{4}+\frac{\delta}{2}=\delta,
\end{multline}
that is, $x_{n+1}\in U_X(\wx,\delta)$.

By the induction hypothesis,  $x_n\in U_X(\wx,\delta)\cap Q$, and so
$x_{n+1}=x_n-\Lambda^{-1}F(x_n,\sigma)\in Q$ by the choice of the mapping~$F$.
Therefore, the entire sequence $\{x_n\}$ lies in $U_X(\wx,\delta)\cap Q$.

Next, the sequence $\{x_n\}$ is a Cauchy sequence. Indeed, using  \eqref{i5} and
arguing as in the previous inequality, we have, for all $n,m\in\mathbb N$,
\begin{multline}\label{i7}
\|x_{n+m}-x_n\|_X\le\|x_{n+m}-x_{n+m-1}\|_X+\ldots+
\|x_{n+1}-x_n\|_X\le\\
\le\left(\frac 1{2^{n+m-1}}+\ldots+\frac 1{2^n}\right)\|x_1-x\|_X <
\frac{\|x_1-x\|_X}{2^{n-1}} < \frac{\delta}{2^n}\,,
\end{multline}
which proves that  $\{x_n\}$ is a Cauchy sequence.

The functions $x_n$ are defined on $(V'_0\cap Q)\times U_0$. Let $(x,\sigma)\in (V'_0\cap
Q)\times U_0$. We set $\widetilde g_F(x,\sigma)=\lim_{n\to\infty}x_n$. From \eqref{i6}
it follows that  $\widetilde g_F(x,\sigma)\in U_X(\wx,\delta)=V_0$. Since the set~$Q$
is closed, we have $\widetilde g_F(x,\sigma)\in Q$, and thus the mapping
$\widetilde g_F\colon (V'_0\cap Q)\times U_0\to (V_0\cap Q)$ is defined.

Making $n\to\infty$ in \eqref{i4}  and taking into account that $F$ is continuous, we get the relation
 $F(\widetilde g_F(x,\sigma),\sigma)=0$.

Let us show that $\widetilde g_F(x,\sigma)=\widetilde g_F(\wx,\sigma)$ for any point
$(x,\sigma)\in (V'_0\cap Q)\times U_0$. Indeed, by~\eqref{j0} we have
\begin{multline}\label{rav}
\|\widetilde g_F(x,\sigma)-\widetilde g_F(\wx,\sigma)\|_X=
\|\Lambda^{-1}\Lambda(\widetilde g_F(x,\sigma)-\widetilde g_F(\wx,\sigma))\|_X
\\\le\|\Lambda^{-1}\|\|\Lambda(\widetilde g_F(x,\sigma)-\widetilde g_F(\wx,\sigma))\|_Y=
\|\Lambda^{-1}\|\|F(\widetilde g_F(x,\sigma),\sigma)\\-F(\widetilde
g_F(\wx,\sigma),\sigma)- \Lambda(\widetilde g_F(x,\sigma)-\widetilde g_F(\wx,\sigma)\|_Y
\\\le\frac12\|\widetilde g_F(x,\sigma)-\widetilde g_F(\wx,\sigma)\|_X,
\end{multline}
that is, $\widetilde g_F(x,\sigma)=\widetilde g_F(\wx,\sigma)$.

We set $g_F(\sigma)=\widetilde g_F(\wx,\sigma)$. This is a~mapping from  $U_0$ into $V_0\cap Q$
and by the above $F(g_F(\sigma),\sigma)=0$ for all $\sigma\in U_0$.

From \eqref{i6} it follows that  $\|x_{n}-x\|_X\le2\|\Lambda^{-1}\|\|F(x,\sigma)\|_Y$. Making
$n\to\infty$, we get the inequality $\|g_F(\sigma)-x\|_X\le
2\|\Lambda^{-1}\|\|F(x,\sigma)\|_Y$.

Since $F$ is continuous we derive from \eqref{i3} that the functions $x_n$, \textit{qua}
functions  of~$\sigma$, are continuous on~$U_0$. Making $m\to\infty$ in \eqref{i7}, we conclude that
the mapping $\sigma\mapsto g_F(\sigma)$ is the uniform limit of continuous functions and hence is continuous.

That the equality  $F(x,\sigma)=0$ on $(V_0\cap Q)\times U_0$ is possible only when
$x=g_F(\sigma)$ is proved by the same arguments as in~\eqref{rav}.
\end{proof}

To derive another corollary to this theorem we first need one definition.

Let $\mathcal M$ be a topological space,  $Z$ be a normed space.
We denote by $C(\mathcal M,\, Z)$ the space of continuous bounded mappings
$F\colon \mathcal M\to Z$ equipped with the norm $$\|F\|_{C(\mathcal M,\, Z)}=\sup\limits_{x\in
\mathcal M}\|F(x)\|_Z.$$

\begin{corollary}\label{s3}
Let in the theorem the mapping  $\wF$, together with the mapping $F\in W_0$, be such that
$x-\wF^{-1}_x(\wx,\ws)\wF(x,\sigma)\in Q$ for all $(x,\sigma)\in (V'_0\cap Q)\times
U_0$.

Then there exist continuous mappings $g_{F}\colon U_0\to V_0\cap Q$ and $g_{\wF}\colon
U_0\to V_0\cap Q$ such that  $F(g_{F}(\sigma),\sigma)=0$ and
$\wF(g_{\wF}(\sigma),\sigma)=0$ for all $\sigma\in  U_0$ and there exists a~neighborhood
$U'_0\subset U_0$ of the point~$\ws$ such that
\begin{equation}\label{neq}
\|g_{F}-g_{\wF}\|_{C(U'_0,\,X)}\le2\|(\wF_x(\wx,\ws))^{-1}\|\|F-\wF\|_{C((V\cap
Q)\times\Sigma,\, Y)}.
\end{equation}
Moreover, the equalities $F(x,\sigma)=0$ and $\wF(x,\sigma)=0$ on $(V_0\cap Q)\times U_0$
are possible,  respectively, only when $x=g_F(\sigma)$ and $x=g_{\wF}(\sigma)$.
\end{corollary}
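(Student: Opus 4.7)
The existence of $g_{F}$ and $g_{\wF}$, along with the uniqueness statements, follow directly by two applications of the preceding theorem (one to $F$, one to $\wF$), so the real content is the quantitative estimate \eqref{neq}. The plan is to derive this by substituting $x=g_{\wF}(\sigma)$ into the second half of \eqref{i1}.

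First I would note that by the uniqueness part of the theorem (applied to $\wF$), combined with the fact that $\wF(\wx,\ws)=0$ and $\wx\in V_0\cap Q$, we have $g_{\wF}(\ws)=\wx$. Since $g_{\wF}\colon U_0\to V_0\cap Q$ is continuous and $\wx\in V'_0$, there exists a neighborhood $U'_0\subset U_0$ of $\ws$ such that $g_{\wF}(\sigma)\in V'_0\cap Q$ for every $\sigma\in U'_0$. This is the only slightly delicate technical step; everything else is a direct substitution.

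Now fix $\sigma\in U'_0$ and set $x=g_{\wF}(\sigma)\in V'_0\cap Q$. The second inequality in \eqref{i1}, applied to the mapping $F$ at the point $(x,\sigma)$, gives
\begin{equation*}
\|g_F(\sigma)-g_{\wF}(\sigma)\|_X=\|g_F(\sigma)-x\|_X\le 2\|(\wF_x(\wx,\ws))^{-1}\|\,\|F(g_{\wF}(\sigma),\sigma)\|_Y.
\end{equation*}
Since $\wF(g_{\wF}(\sigma),\sigma)=0$ by construction of $g_{\wF}$, and $(g_{\wF}(\sigma),\sigma)\in (V\cap Q)\times\Sigma$, we may replace the right-hand quantity by
\begin{equation*}
\|F(g_{\wF}(\sigma),\sigma)-\wF(g_{\wF}(\sigma),\sigma)\|_Y\le\|F-\wF\|_{C((V\cap Q)\times\Sigma,\,Y)}.
\end{equation*}
Taking the supremum over $\sigma\in U'_0$ yields exactly the estimate \eqref{neq}.

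The uniqueness assertions at the end of the corollary are inherited verbatim from the last sentence of the theorem, applied separately to $F$ and to $\wF$. The main (and only) obstacle is the tacit verification that both hypotheses ``$x-\wF^{-1}_x(\wx,\ws)F(x,\sigma)\in Q$'' and ``$x-\wF^{-1}_x(\wx,\ws)\wF(x,\sigma)\in Q$'' hold on $(V'_0\cap Q)\times U_0$, so that the theorem is indeed applicable to both mappings; but this is granted by hypothesis in the statement of the corollary.
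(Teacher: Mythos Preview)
Your proof is correct and follows essentially the same route as the paper: obtain $g_F$ and $g_{\wF}$ from two applications of the theorem, use continuity of $g_{\wF}$ at $\ws$ (where $g_{\wF}(\ws)=\wx$) to find $U'_0$ with $g_{\wF}(U'_0)\subset V'_0\cap Q$, then plug $x=g_{\wF}(\sigma)$ into the estimate in \eqref{i1} for $F$ and subtract the zero term $\wF(g_{\wF}(\sigma),\sigma)$. The only cosmetic difference is that the paper deduces $g_{\wF}(\ws)=\wx$ directly from the inequality in \eqref{i1} (with $x=\wx$, $\sigma=\ws$) rather than from the uniqueness clause, but this is immaterial.
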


\begin{proof} \rm
All the assertions  of the corollary, except for inequality \eqref{neq}, are direct
consequences of the theorem. Let us prove inequality \eqref{neq}.

Since the mapping $g_{\wF}$ is continuous at $\ws$ (from \eqref{i1} for $\wF$ it follows that
$g_{\wF}(\ws)=\wx$), there exists a~neighborhood $U'_0\subset U_0$ of this point such that
$g_{\wF}(\sigma)\in V'_0\cap Q$ for all $\sigma\in U'_0$. For such~$\sigma$,
substituting $g_{\wF}(\sigma)$
in estimate \eqref{i1} for~$F$ in place of~$x$  and subtracting the zero element
$\wF(g_{\wF}(\sigma),\sigma)$ on the right under the norm sign, we get the inequality
\begin{multline*}
\|g_F(\sigma)-g_{\wF}(\sigma)\|_X\le
2\|(\wF_x(\wx,\ws))^{-1}\|\|F(g_{\wF}(\sigma),\sigma)-\wF(g_{\wF}(\sigma),\sigma)\|_Y\\
\le2\|(\wF_x(\wx,\ws))^{-1}\|\|F-\wF\|_{C((V\cap Q)\times\Sigma,\, Y)}.
\end{multline*}
Taking the supremum over $\sigma\in U'_0$ on the left, we get~\eqref{neq}.
\end{proof}

\begin{lemma}[\rm the inverse function lemma]\label{L1}
Let $X$ be a~Banach space, $K$ be a convex closed subset of~$X$, $V$~be
a~neighborhood of a~point $\ww\in K$ and let $\widehat\Phi\colon V\to\mathbb R^m$. Assume that the following
conditions are satisfied:
\begin{itemize}
\item[$1)$] $\widehat \Phi\in C(V\cap K,\,\mathbb R^m)$,
\item[$2)$] $\widehat \Phi$ is continuously differentiable at the point $\ww$,
\item[$3)$] $0\in{\rm int}\,\widehat \Phi'(\ww)(K-\ww)$.
\end{itemize}
Then there exist a~neighborhood $V_0\subset V$ of the point $\ww$ and constants $r_0>0$ and $\gamma>0$
such that, for any $r\in(0,r_0]$ and any mapping $\Phi\in U_{C(V\cap K,\,\mathbb
R^m)}(\widehat \Phi,r)$, there exists a~mapping $g_\Phi(w,y)\colon (V_0\cap
K)\times\mathbb R^m\to V\cap K$ satisfying
\begin{equation}\label{5pr}
\Phi(g_\Phi(w,y))=y,\qquad \|g_\Phi(w,y)-w\|_X\le\gamma r
\end{equation}
for all $(w,y)\in (V_0\cap K)\times\mathbb R^m$, for which
\begin{equation}\label{5n}
|\widehat\Phi(w)-y|+\|w-\ww\|_X\le r.
\end{equation}
\end{lemma}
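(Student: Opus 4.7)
The plan is to reduce Lemma~\ref{L1} to a finite-dimensional fixed-point problem and apply Brouwer's theorem. Hypothesis~3) will furnish a Lipschitz right-inverse of $A:=\widehat\Phi'(\ww)$ that respects the convex constraint $K$; hypothesis~2) will control the nonlinear remainder.

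First, from hypothesis~3) one selects vectors $\xi_i^\pm\in K-\ww$ and a constant $c>0$ with $A\xi_i^\pm=\pm c\,e_i$ for $i=1,\dots,m$, where $\{e_i\}$ is the standard basis of $\mathbb R^m$. Define $S\colon\mathbb R^m\to X$ by
\begin{equation*}
S(\beta)=\frac{1}{c}\sum_{i=1}^m\bigl(\beta_i^+\xi_i^++\beta_i^-\xi_i^-\bigr),\qquad \beta_i^\pm:=\max(\pm\beta_i,0).
\end{equation*}
This $S$ is Lipschitz with some constant $L$ depending only on $c$ and the $\|\xi_i^\pm\|_X$, satisfies $AS(\beta)=\beta$ identically, and for $\|\beta\|_1\le c$ the point $\ww+S(\beta)$ is a convex combination of $\ww$ and the points $u_i^\pm:=\ww+\xi_i^\pm\in K$, hence lies in~$K$.

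Next, for a fixed pair $(w,y)$ satisfying~\eqref{5n} and a fixed $\Phi$ with $\|\Phi-\widehat\Phi\|_{C(V\cap K,\,\mathbb R^m)}\le r$, define the continuous self-map candidate $T\colon\mathbb R^m\to\mathbb R^m$ by $T(\beta)=\beta-\Phi(\ww+S(\beta))+y$. A fixed point $\beta^*$ of $T$ yields $\Phi(\ww+S(\beta^*))=y$, so one sets $g_\Phi(w,y):=\ww+S(\beta^*)$ and extends $g_\Phi$ arbitrarily on pairs violating~\eqref{5n}. Using $AS(\beta)=\beta$ and the Fr\'echet expansion $\widehat\Phi(\ww+S(\beta))=\widehat\Phi(\ww)+\beta+\widetilde r(\beta)$ with $|\widetilde r(\beta)|\le\varepsilon\,\|S(\beta)\|_X$ for $\|S(\beta)\|_X$ small, one rewrites
\begin{equation*}
T(\beta) = \bigl(y-\widehat\Phi(\ww)\bigr) - \widetilde r(\beta) - (\Phi-\widehat\Phi)\bigl(\ww+S(\beta)\bigr).
\end{equation*}
The three summands are bounded, respectively, by $(1+\|A\|+\varepsilon)r$ (via~\eqref{5n} together with differentiability of $\widehat\Phi$ at $\ww$ applied to $\widehat\Phi(w)-\widehat\Phi(\ww)$), by $\varepsilon L\,|\beta|$, and by $r$.

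Finally, choose $\gamma:=2(2+\|A\|)$ and shrink $r_0$ so that: (i) for every $r\le r_0$ the image $\ww+S(\{\beta\in\mathbb R^m:|\beta|\le\gamma r\})$ is contained in $V\cap K$, which needs $L\gamma r_0<\rho$ for some open ball $U_X(\ww,\rho)\subset V$ together with $\sqrt m\,\gamma r_0\le c$; (ii) the Fr\'echet error constant satisfies $\varepsilon L\gamma\le 1$, with an analogous smallness requirement on the linearization of $\widehat\Phi$ at $\ww$ used in the first bound above. Set $V_0:=U_X(\ww,r_0)$. With these choices the three bounds sum to at most $\gamma r$, so the continuous map $T$ carries the compact convex ball $\{\beta\in\mathbb R^m:|\beta|\le\gamma r\}$ into itself, and Brouwer's theorem produces a fixed point $\beta^*$. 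The estimate $\|g_\Phi(w,y)-w\|_X\le L\gamma r+r$ follows from the triangle inequality, giving~\eqref{5pr} after absorbing the additive $r$ into a slightly enlarged $\gamma$. The main obstacle is the simultaneous balancing of these constants so that the Brouwer ball is both $T$-invariant and mapped by $\ww+S(\cdot)$ into $V\cap K$.
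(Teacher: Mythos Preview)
Your approach is correct and follows the same overall strategy as the paper: both arguments reduce the problem to Brouwer's fixed-point theorem in $\mathbb R^m$ by composing $\Phi$ with a right inverse of $A=\widehat\Phi'(\ww)$ that respects the convex constraint~$K$. The difference lies in how that right inverse is produced.

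The paper does not construct the right inverse by hand. It invokes a separate \emph{implicit function theorem for inclusions} (their Proposition~2, quoted from an earlier paper of the authors) applied to the map $(x,\sigma)\mapsto(\Lambda x-\sigma_1,\,x-\sigma_2)$ with the target set $\{0\}\times C$. This yields a continuous map $\varphi(w,z)$ with $\Lambda\varphi(w,z)=z$, $\varphi(w,z)\in K-w$, and a metric estimate. The Brouwer map is then $G(z)=y+z-\Phi\bigl(w+\varphi(w,z-\widehat\Phi(w))\bigr)$, based at the variable point~$w$, and the fixed point gives $g_\Phi(w,y)=w+\varphi(w,z_*-\widehat\Phi(w))$.

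Your construction is more elementary and self-contained: you exploit the finite-dimensionality of the target directly, picking $\xi_i^\pm\in K-\ww$ with $A\xi_i^\pm=\pm c\,e_i$ and assembling the piecewise-linear Lipschitz section~$S$. Your Brouwer map is based at the fixed point~$\ww$ rather than at~$w$, which is why the final distance estimate picks up the extra $\|w-\ww\|\le r$ via the triangle inequality. This avoids importing the black-box Proposition~2 at the cost of a construction that is specific to targets in~$\mathbb R^m$ (which is all that is needed here). The paper's route is more modular and yields a right inverse depending on~$w$, but for the present lemma your explicit section is both sufficient and arguably cleaner. Your bookkeeping with the constants (the dual use of~$\gamma$ for the Brouwer radius and for the final estimate) should be tidied, but the balancing you describe is genuine and goes through.
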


Before proceeding with the proof of the lemma, we prove one result, which is a~direct corollary
to the implicit function theorem for inclusions (see the paper~\cite{AM1} by the authors of the present paper).

\begin{proposition}\label{Pred2}
Let $X$ and $Y$ be Banach spaces, $\Lambda\colon X\to Y$ be a~linear
continuous operator, $C$~be a~convex closed subset of~$X$, $x_0\in C$, $y_0=\Lambda x_0$ and
\begin{equation}\label{1pr}
y_0\in {\rm int}\, \Lambda C.
\end{equation}
Then there exist neighborhoods $V_1$, $U_1$, $U_2$, respectively, of the points $x_0$, $y_0$, $0_X$,
a~constant $a>0$, and a~continuous mapping $R\colon V_1\times U_1\times U_2\to X$ such that,
for any $\sigma=(\sigma_1,\sigma_2)\in U_1\times U_2$ and $\xi\in V_1$,
\begin{equation}\label{2pr}
\Lambda R(\xi,\sigma)=\sigma_1,\qquad R(\xi,\sigma)\in\sigma_2+C
\end{equation}
and
\begin{equation}\label{3pr}
\|R(\xi,\sigma)-\xi\|_X\le a(\|\Lambda\xi-\sigma_1\|_Y+{\rm dist}\,(\xi-\sigma_2,C)),
\end{equation}
where ${\rm dist}\,(\xi-\sigma_2,C))$ is the distance from $\xi-\sigma_2$ to the set~$C$.
\end{proposition}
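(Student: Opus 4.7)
The statement is essentially an Ursescu/Robinson-type openness lemma for a linear operator coupled with a convex constraint, and the paper markets it as a direct consequence of the implicit function theorem for inclusions proved in~\cite{AM1}. The plan is therefore to recast the pair of conditions $\Lambda R=\sigma_1$, $R\in\sigma_2+C$ as a single parametric inclusion problem and to quote that theorem.

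I would introduce the mapping $F(x,\sigma_1)=\Lambda x-\sigma_1$ together with the convex constraint $x-\sigma_2\in C$. At the reference point $(x_0,(y_0,0_X))$ both are satisfied, since $\Lambda x_0=y_0$ and $x_0\in C$. The only nontrivial hypothesis of the implicit function theorem for inclusions from~\cite{AM1} is the openness/regularity of the linearized problem at this point. Since $F$ is already linear in~$x$, the linearization is just $\Lambda$ acting on the displacement from $x_0$, and the constraint cone is $C-x_0$; the required regularity therefore reduces to $0\in\mathrm{int}\,\Lambda(C-x_0)$, which by translation is exactly the assumed condition~\eqref{1pr}.

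Applying~\cite{AM1} then yields neighborhoods $V_1$, $U_1$, $U_2$ of $x_0$, $y_0$, $0_X$ and a continuous selection $R(\xi,\sigma)$ satisfying~\eqref{2pr}, together with a Lipschitz-type bound of the form $\|R(\xi,\sigma)-\xi\|_X\le a\cdot r(\xi,\sigma)$, where $r$ is the residual measuring the failure of $\xi$ to satisfy the two conditions at the parameter $\sigma$. That residual naturally splits into the equation-violation $\|\Lambda\xi-\sigma_1\|_Y$ and the inclusion-violation $\mathrm{dist}(\xi-\sigma_2,C)$, producing exactly~\eqref{3pr}.

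The main obstacle, apart from bookkeeping, is aligning the hypothesis format of~\cite{AM1} with the present data and identifying the residual as the sum of the two explicit violations. There is no substantive new analytic content: if one had to reprove~\eqref{3pr} directly, one would imitate the modified Newton iteration used in the generalized implicit function theorem above, combined with a Baire-type extraction of the covering constant~$a$ from $y_0\in\mathrm{int}\,\Lambda C$ (applied to the absorbing convex sets $n(\Lambda C-y_0)$), which is precisely how the theorem in~\cite{AM1} is established.
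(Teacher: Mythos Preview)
Your proposal is correct and follows essentially the same route as the paper: both reduce the statement to a single application of the implicit function theorem for inclusions from~\cite{AM1}. The only cosmetic difference is in the packaging: the paper encodes the equation and the constraint simultaneously by taking $F(x,\sigma)=(\Lambda x-\sigma_1,\,x-\sigma_2)\in Y\times X$ with target set $A=\{0_Y\}\times C$, and then spends a short paragraph verifying the regularity hypothesis $0\in\mathrm{int}\bigl(\mathrm{Im}(\Lambda,\mathrm{Id})+(0_Y,x_0-C)\bigr)$ explicitly from~\eqref{1pr}, whereas you state directly that this condition is equivalent to $0\in\mathrm{int}\,\Lambda(C-x_0)$. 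Your identification of the residual as $\|\Lambda\xi-\sigma_1\|_Y+\mathrm{dist}(\xi-\sigma_2,C)$ is exactly what falls out of the paper's encoding, since the distance from $F(\xi,\sigma)$ to $A$ in $Y\times X$ is precisely that sum.
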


\begin{proof} \rm
Let a mapping $F\colon X\to Y\times X$ act by the rule $F(x,\sigma)=(\Lambda
x-\sigma_1, x-\sigma_2)$. We set $A=(0_Y, C)$ and $\sigma_0=(y_0,0_X)$. Then
$F(x_0,\sigma_0)=(\Lambda x_0-y_0, x_0)=(0_Y,x_0)\in A$.

Let us show that inclusion \eqref{1} implies that
\begin{equation}\label{4pr}
0\in{\rm int}({\rm Im}\,F_x(x_0,\sigma_0)+F(x_0,\sigma_0)-A) ={\rm int}({\rm
Im}(\Lambda,{\rm Id})+(0_Y, x_0-C)),
\end{equation}
where ${\rm Id}$ is the identity operator.

By \eqref{1pr}, there exists $\delta_0>0$ such that $U_Y(y_0,\delta_0)\subset \Lambda
C$. Setting $\delta=\delta_0/(\|\Lambda\|+1)$, we show that $U_{Y\times
X}(0,\delta)\subset{\rm int}({\rm Im}(\Lambda,{\rm Id})+(0_Y,x_0-C))$.

Let $\sigma=(\sigma_1,\sigma_2)\in U_{Y\times X}(0,\delta)$. From the choice of~$\delta$
it follows that  $\sigma_1-\Lambda\sigma_2\in U_Y(0,\delta_0)$, and hence
$y_0+\sigma_1-\Lambda\sigma_2\in U_Y(y_0, \delta_0)\subset \Lambda C$. Therefore, there exists
an element $x_1(\sigma)\in C$ such that $y_0+\sigma_1-\Lambda\sigma_2=\Lambda x_1(\sigma)$.

We set $x(\sigma)=\sigma_2+x_1(\sigma)-x_0$. Hence, using the previous equality, we find that
$\sigma_1=\Lambda\sigma_2+\Lambda x_1(\sigma)-\Lambda
x_0=\Lambda(\sigma_2+x_1(\sigma)-x_0)=\Lambda x(\sigma)$ and
$\sigma_2=x(\sigma)+x_0-x_1(\sigma)\in x(\sigma)+x_0-C$;
that is, $\sigma=(\sigma_1,\sigma_2)\in{\rm Im}(\Lambda,{\rm Id})+(0_Y,x_0-C)$. Therefore,
$U_{Y\times X}(0,\delta)\subset{\rm Im}(\Lambda,{\rm Id})+(0_Y,x_0-C)$, proving inclusion~\eqref{4pr}.

Now all the hypotheses of the implicit function theorem from \cite{AM1} are clearly satisfied,
where $\Sigma=Y\times X$ and $V=X$ (with  $\wx$, $\wy$ and $\ws$ in place of
$x_0$, $y_0$ and~$\sigma_0$, respectively). Relations \eqref{2pr} and \eqref{3pr}
are immediate consequences of this theorem.
\end{proof}

\begin{proof} \rm [\rm of the inverse function lemma]
We set   $C=K-\ww$, $x_0=0$, and put $\Lambda=\widehat \Phi'(\ww)$. Then $y_0=\Lambda x_0
=0\in{\rm int}\,\Lambda C$ by condition~$3)$ of the lemma, and hence the hypotheses of
Proposition~\ref{Pred2} with these data are satisfied. Let $V_1$, $U_1$, $U_2$ be the corresponding
neighborhoods (of the origins in~$X$ and $\mathbb R^m$), and let the constant $a>0$ and a~continuous mapping
$R\colon V_1\times U_1\times U_2\to X$ be from this proposition.

Let $\delta_1>0$ be such that $U_X(0,\delta_1)\subset V_1\cap U_2$. Then if  $w\in
U_X(\ww,\delta_1)$ and $z\in U_1$, then $(\ww-w, (z,\ww-w))\in V_1\times U_1\times U_2$.
Therefore, the mapping $\varphi\colon U_X(\ww,\delta_1)\times U_1\to X$ is defined by the formula
$\varphi(w,z)=R(\ww-w, (z,\ww-w))$. Moreover, by \eqref{2} and \eqref{3},
we have
\begin{equation}\label{7pr}
\Lambda \varphi(w,z)=z,\qquad \varphi(w,z)\in\ww-w+K-\ww=K-w
\end{equation}
and
\begin{equation}\label{8pr}
\|\varphi(w,z)-(\ww-w)\|_X\le a|\Lambda(\ww-w)-z|,
\end{equation}
because ${\rm dist}\,(0,K-\ww)=0$.

From condition $2)$ of the lemma it follows that  there exists $\delta_2>0$ such that
$U_X(\ww,\delta_2)\subset V$ and moreover, for all $w,w'\in U_X(\ww,\delta_2)$,
\begin{equation}\label{9pr}
|\widehat \Phi(w)-\widehat\Phi(w')-\Lambda(w-w')|\le \frac
1{a(\|\Lambda\|+3)+1}\|w-w'\|_X.
\end{equation}
We set $V_0=U_{\mathbb R^m}(\ww,\delta)$, where $\delta=\min(\delta_1,\delta_2)$, and choose
$\rho>0$ so that $B_{\mathbb R^m}(0,\rho)\subset U_1$. Let
$r_0=\min(\rho/3,\,\delta/a(\|\Lambda\|+3))$.

Let $r\in(0,r_0]$, $\Phi\in U_{C(V\cap K,\,\mathbb R^m)}(\widehat \Phi,r)$, and let a~pair
$(w,y)\in (V_0\cap K)\times\mathbb R^m$ satisfy inequality \eqref{5n}. Consider the
mapping $G\colon B_{\mathbb R^m}(\widehat\Phi(x), 3r)\to\mathbb R^m$ defined by
\begin{equation*}
G(z)=y+z-\Phi(w+\varphi(w,z-\widehat \Phi(w))).
\end{equation*}
This definition is correct, because if $z\in B_{\mathbb R^m}(\widehat\Phi(w), 3r)$, then
$|z-\widehat\Phi(w)|\le3r\le3r_0\le\rho$; that is, $z-\widehat\Phi(w)\in U_1$. Next, from
\eqref{8pr}, \eqref{5n} and by the choice of $r_0$ we have
\begin{multline}\label{10pr}
\|\varphi(w,z-\widehat\Phi(w))-(\ww-w)\|_X\le a|\Lambda(\ww-w)-(z-\widehat\Phi(w))|\\\le
\alpha(\|\Lambda\|\|w-\ww\|_X+|z-\widehat\Phi(w)|)\le a(\|\Lambda\|+3)r.
\end{multline}
The right-hand side is smaller than~$\delta$, and hence
$w+\varphi(w, z-\widehat \Phi(w))\in  V$. Besides, from
\eqref{7pr} we get  $w+\varphi(w, z-\widehat \Phi(w))\in  K$.

Let us show that the range of~$G$ lies in the ball $B_{\mathbb R^m}(\widehat\Phi(w), 3r)$.
Indeed,  using the equality $\Lambda \varphi(w,
z-\widehat\Phi(w))=z-\widehat\Phi(w)$, which holds by \eqref{7pr}, and employing
condition \eqref{5n}, relation \eqref{9pr}, the condition $\Phi\in U_{C(V\cap K,\,\mathbb R^m)}(\widehat \Phi,r)$,
inequality \eqref{10pr}, and again condition \eqref{5n}, we get
\begin{multline*}
|G(z)-\widehat\Phi(w)|\le|y-\widehat\Phi(w)|+|z-\Phi(w+\varphi(w,z-\widehat \Phi(w)))|\le r\\
+|\widehat\Phi(w+
\varphi(w,z-\widehat\Phi(w)))-\widehat\Phi(w)-\Lambda\varphi(w,z-\widehat\Phi(w))|\\+
|\Phi(w+ \varphi(w,z-\widehat\Phi(w)))-\widehat\Phi(w+ \varphi(w,z-\widehat\Phi(w)))|\le
r+\\\varepsilon\|\varphi(w,z-\widehat \Phi(w))\|_X+r\le
2r+\varepsilon(\|\varphi(w,z-\widehat \Phi(w))-(\ww-w)\|_X+\|\ww-w\|_X)\\\le
2r+\varepsilon(a(\|\Lambda\|+3)+1)r=3r
\end{multline*}
(here we set, for brevity $\varepsilon=1/(a(\|\Lambda\|+3)+1)$).
The mapping is continuous \textit{qua} the superposition of continuous mappings, and hence by Brouwer's
fixed point theorem, there exists a~point $z_*=z_*(w,y,\Phi)$ such that
$G(z_*)=z_*$ or, what is the same, $\Phi(w+\varphi(w,z_*-\widehat \Phi(w)))=y$. We set
$g_\Phi(w,y)=w+\varphi(w,z_*-\widehat \Phi(w))$ (outside the set defined by inequality \eqref{5n}
we define $g_\Phi$ to be zero, for example). Hence
$\Phi(g_\Phi(w,y))=y$. By the above, $g_\Phi(w,y)\in V$, and from \eqref{7pr} we get
$g_\Phi(w,y)\in K$. Moreover, $\|g_\Phi(w,y)-w\|_X=\|\varphi(w,z_*-\widehat
\Phi(w))\|_X \le(a(\|\Lambda\|+3)+1)r=\gamma r$, where $\gamma=a(\|\Lambda\|+3)+1$.
\end{proof}

We now formulate some assumptions and notation to be used in all the lemmas that follow.

We shall assume that the mapping  $\varphi\colon\mathbb R\times\mathbb R^n\times
\mathbb R^r\to\mathbb R^n$ (of the variables $t\in\mathbb R$, $x\in\mathbb R^n$ and $u\in\mathbb
R^r$) is continuous together with its derivative with respect to~$x$.

Recall that the set
\begin{equation*}
\mathcal A_k=\{\,\ov\alpha=(\alpha_1,\ldots,\alpha_k)\in (L_\infty([t_0,t_1]))^k :
\ov\alpha(t)\in\Sigma^k \,\,\,\text{a.~e.\ on}\,\,\, [t_0,t_1]\,\},
\end{equation*}
where $\Sigma^k=\{\,\ov\alpha=(\alpha_1,\ldots,\alpha_k)\in \mathbb R_+^k : \,\,
\sum_{i=1}^k\alpha_i=1\,\}$
was defined above for each $k\in\mathbb N$ (see the beginning of the proof of Theorem~\ref{2}).

Let a tuple $\woa=(\widehat\alpha_1,\ldots,\widehat\alpha_k)\in\mathcal A_k$
and a tuple $\wou=(\wu_1,\ldots,\wu_k)\in(L_\infty([t_0,t_1],\mathbb R^r))^k$ be fixed. Further,
let $N>k$, $\ov\alpha'=(\woa,0)\in (L_\infty([t_0,t_1]))^N$, $\ov v=(v_1,\ldots,v_{N-k})\in
(L_\infty([t_0,t_1],\mathbb R^r))^{N-k}$ and $\ov
u'=(\wu_1,\ldots,\wu_k,v_1,\ldots,v_{N-k})$.

The elements of  $\ov u'$ will be denoted by $u_i$, $i=1,\ldots,N$; that is, $u_i=\wu_i$,
$i=1,\ldots,k$,  $u_{k+i}=v_i$, $i=1,\ldots, N-k$.

\begin{lemma}[\rm the lemma on equation in variations]\label{L2}
Let $\wx$ be the solution of the differential equation
\begin{equation}\label{gor1}
\dot x=\sum_{i=1}^k\widehat\alpha_i(t)\varphi(t,x,\wu_i(t))
\end{equation}
on $[t_0,t_1]$. Then there exist  neighborhoods $\wo(\wx(t_0))$  and $\wo(\ov\alpha')$
such that, for all $\xi\in \wo(\wx(t_0))$ and $\ov\alpha=(\alpha_1,\ldots,\alpha_N)\in
\wo(\ov\alpha')$, there exists a~unique solution $x(\cdot,\xi,\ov\alpha;\ov u')$ of the Cauchy problem
\begin{equation}\label{gor2}
\dot x=\sum_{i=1}^N\alpha_i(t)\varphi(t,x,u_i(t)),\quad x(t_0)=\xi,
\end{equation}
on $[t_0,t_1]$.

The mapping $(\xi,\ov\alpha)\mapsto x(\cdot,\xi,\ov\alpha;\ov u')$ lies in
$C(\wo(\wx(t_0))\times\wo(\ov\alpha'),C([t_0,t_1],\mathbb R^n))$ and is continuously
differentiable.

If $\wx'$ is the derivative of this mapping at a~point $(\wx(t_0),\ov\alpha')$, then, for
any $\xi\in\mathbb R^n$ and
$\ov\alpha=(\alpha_1,\ldots,\alpha_N)\in(L_\infty([t_0,t_1]))^N$, the function
$h=\wx'[\xi,\ov\alpha]$ is the solution of the equation in variations
\begin{equation}\label{s2}
\dot h=\sum_{i=1}^k\widehat\alpha_i(t)\varphi_x(t,\wx(t),\wu_i(t))h+
\sum_{i=1}^N\alpha_i(t)\varphi(t,\wx(t),u_i(t)),\qquad  h(t_0)=\xi.
\end{equation}
\end{lemma}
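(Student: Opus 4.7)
The plan is to recast the Cauchy problem \eqref{gor2} as an operator equation in $C([t_0,t_1],\mathbb R^n)$ and apply the generalized implicit function theorem proved above (with the cone $Q=X$). Define
\begin{equation*}
\wF(x,\xi,\ov\alpha)(t)=x(t)-\xi-\int_{t_0}^t\sum_{i=1}^N\alpha_i(s)\varphi(s,x(s),u_i(s))\,ds
\end{equation*}
as a map from $C([t_0,t_1],\mathbb R^n)\times\mathbb R^n\times(L_\infty([t_0,t_1]))^N$ into $C([t_0,t_1],\mathbb R^n)$. By hypothesis $\wF(\wx,\wx(t_0),\ov\alpha')=0$, and a continuous function $x$ solves \eqref{gor2} iff $\wF(x,\xi,\ov\alpha)=0$, so the claimed existence and uniqueness of $x\cd,\xi,\ov\alpha;\ov u')$ on the full interval $[t_0,t_1]$ is identified with solvability of this single equation in $C$.

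Next I would verify the hypotheses of the theorem on a bounded neighborhood of $(\wx,\wx(t_0),\ov\alpha')$. Affine dependence makes $\wF$ jointly $C^1$ in $(\xi,\ov\alpha)$, and the assumed continuity of $\varphi_x$ yields $C^1$ dependence in $x$ with
\begin{equation*}
\wF_x(x,\xi,\ov\alpha)h(t)=h(t)-\int_{t_0}^t\sum_{i=1}^N\alpha_i(s)\varphi_x(s,x(s),u_i(s))h(s)\,ds.
\end{equation*}
At the reference point this is $I-K$ with $K$ a Volterra operator whose kernel $A(s)=\sum_{i=1}^k\widehat\alpha_i(s)\varphi_x(s,\wx(s),\wu_i(s))$ is essentially bounded, so the equation $(I-K)h=g$ reduces to a linear Cauchy problem solvable uniquely on $[t_0,t_1]$ by Gronwall, giving invertibility. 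The theorem then produces neighborhoods $\wo(\wx(t_0))$ and $\wo(\ov\alpha')$ and a unique continuous implicit function $(\xi,\ov\alpha)\mapsto x\cd,\xi,\ov\alpha;\ov u')$ with values in a $C$-neighborhood of $\wx$.

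To upgrade continuity to $C^1$, I would invoke the standard argument: since $\wF$ is jointly $C^1$ and $\wF_x^{-1}$ depends continuously on $(x,\xi,\ov\alpha)$ near the reference point, the derivative of the implicit function must equal $-\wF_x^{-1}\wF_{(\xi,\ov\alpha)}$, which is continuous in $(\xi,\ov\alpha)$. Differentiating the identity $\wF(x\cd,\xi,\ov\alpha;\ov u'),\xi,\ov\alpha)=0$ at $(\wx(t_0),\ov\alpha')$ in a direction $(\xi,\ov\alpha)\in\mathbb R^n\times(L_\infty([t_0,t_1]))^N$ and applying the above formulas for $\wF_x$, $\wF_\xi$ and $\wF_{\ov\alpha}$ yields
\begin{equation*}
h(t)=\xi+\int_{t_0}^t\Bigl(\sum_{i=1}^k\widehat\alpha_i(s)\varphi_x(s,\wx(s),\wu_i(s))h(s)+\sum_{i=1}^N\alpha_i(s)\varphi(s,\wx(s),u_i(s))\Bigr)\,ds
\end{equation*}
for $h=\wx'[\xi,\ov\alpha]$, which is precisely \eqref{s2} in integrated form with $h(t_0)=\xi$.

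The main obstacle is bookkeeping of the $C^1$ regularity of $\wF$ between the indicated function spaces, in particular the continuity of the superposition operator $x\mapsto\varphi\cd,x\cd),u_i\cd))$ and of the coefficient of the Volterra kernel in $(x,\xi,\ov\alpha)$, which rests on uniform continuity of $\varphi$ and $\varphi_x$ on compacts together with essential boundedness of $\alpha_i$ and $u_i$. Everything else is the classical implicit function scheme once the problem has been lifted to $C([t_0,t_1],\mathbb R^n)$, which is precisely what secures the global (rather than merely local in $t$) existence of $x\cd,\xi,\ov\alpha;\ov u')$ on the whole interval.
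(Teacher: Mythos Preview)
Your proposal is correct and follows essentially the same route as the paper: recast \eqref{gor2} as the zero set of the integral operator $F(x,\xi,\ov\alpha)(t)=x(t)-\xi-\int_{t_0}^t\sum_i\alpha_i\varphi(\cdot,x,u_i)$, verify that $F$ is $C^1$ with invertible $F_x$ at the reference point, apply an implicit function theorem, and then read off \eqref{s2} from the implicit-derivative formula $F_x\,\wx'=-F_{(\xi,\ov\alpha)}$.

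The only divergence is in which implicit function theorem is invoked. The paper applies the \emph{classical} $C^1$ implicit function theorem (citing Zorich), which delivers the $C^1$ dependence of $(\xi,\ov\alpha)\mapsto x(\cdot,\xi,\ov\alpha;\ov u')$ directly. You instead appeal to the paper's own \emph{generalized} implicit function theorem with $Q=X$; since that result yields only continuity of $g_F$, you are forced to add the separate ``upgrade to $C^1$'' step via $-\wF_x^{-1}\wF_{(\xi,\ov\alpha)}$. That extra step is exactly the differentiability part of the classical theorem, so your detour is harmless but unnecessary: invoking the classical result from the outset is both shorter and more natural here, and reserves the generalized theorem for its intended use in Lemma~\ref{L4}, where convergence of a family of maps $F_s\to F$ and the convex constraint $Q_L$ are genuinely in play.
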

\begin{proof} \rm
Consider the mapping  $F\colon C([t_0,t_1],\mathbb R^n)\times\mathbb
R^n\times(L_\infty([t_0,t_1]))^N\to C([t_0,t_1],\mathbb R^n)$, which is defined for all
$t\in[t_0,t_1]$ by the formula
$$
F(x,\xi,\ov\alpha)(t)=x(t)-\xi-\int_{t_0}^t\left(\sum_{i=1}^N\alpha_i(\tau)\varphi(\tau,x(\tau),u_i(\tau))\right)d\tau.
$$
The dependence of $F$ on the fixed tuple $\ov u'$ will not be indicated.

It is easily checked that at any point $(x, \xi, {\ov\alpha})\in C([t_0,t_1],\mathbb
R^n)\times\mathbb R^n\times(L_\infty([t_0,t_1]))^N$ the mapping~$F$ has the continuous
partial derivative with respect to~$x$, which acts by the rule
\begin{equation}\label{prox}
F_{x}(x, \xi,
{\ov\alpha})[h](t) =h(t)-\int_{t_0}^t\left(\sum_{i=1}^N\alpha_i(\tau)\varphi_x(\tau,
x(\tau),u_i(\tau))\right)h(\tau)\,d\tau
\end{equation}
for all $h\in C([t_0,t_1],\mathbb R^n)$ and $t\in[t_0,t_1]$ (for details, see,
for example, \cite{ATF} and~\cite{AMT}).

The existence  and continuity of the partial derivative of $F$ with respect to the variable $(\xi,\ov\alpha)$,
which enters linearly, can be easily checked. Moreover, at each point
$(x,\xi,{\ov\alpha})$ this derivative acts by the rule
\begin{equation}\label{prok}
F_{(\xi,\ov\alpha)}(x,\xi,{\ov\alpha})[\xi,\ov\alpha](t)=-\xi-\int_{t_0}^t\left(\sum_{i=1}^N
\alpha_i(\tau)\varphi(\tau, x(\tau),u_i(\tau))\right)d\tau
\end{equation}
for all $(\xi,\ov\alpha)\in \mathbb R^n\times(L_\infty([t_0,t_1]))^N$ and $t\in[t_0,t_1]$.

Therefore, the mapping~$F$ is continuously  differentiable on $C([t_0,t_1],\mathbb
R^n)\times\mathbb R^n\times(L_\infty([t_0,t_1]))^N$.

Since $\wx$ is the solution of equation  \eqref{gor1}, we have
$F(\wx,\wx(t_0),\ov\alpha')(t)=0$, $t\in[t_0,t_1]$. Finally,
that the partial derivative of~$F$ with respect to~$x$ is invertible at the point $(\wx,\wx(t_0),\ov\alpha')$
follows from the solvability of the Cauchy problem for the corresponding linear equation for any
initial conditions.

We can employ the classical implicit function theorem  (see, for example,~\cite{Z}).
According to this theorem, there exist neighborhoods $\wo(\wx)$,  $\wo(\wx(t_0))$ and
$\wo(\ov\alpha')$ and a~continuously differentiable mapping $(\xi,\ov\alpha)\mapsto
x(\cdot,\xi,\ov\alpha;\ov u')$ from $\wo(\wx(t_0))\times\wo(\ov\alpha')$ into $\wo(\wx)$
such that $F(x(t,\xi,\ov\alpha;\ov u'),\xi,\ov\alpha)(t)=0$ for all $(\xi,\ov\alpha)\in
\wo(\wx(t_0))\times\wo(\ov\alpha')$ and $t\in[t_0,t_1]$. This is equivalent to saying that
$x(\cdot,\xi,\ov\alpha;\ov u')$ is a~(unique) solution to equation \eqref{gor2}.

By the formula for the derivative of an implicit function,
the derivative $\wx'$ of the mapping $(\xi,\ov\alpha)\mapsto x(\cdot,\xi,\ov\alpha,\ov u')$ satisfies
$F_{x}(\wx,\wx(t_0),\ov\alpha')\,\wx'=-F_{(\xi,\ov\alpha)}(\wx,\wx(t_0),\ov\alpha')$ at the
point $(\wx(t_0),\ov\alpha')$.
Substituting here the expressions from \eqref{prox} and \eqref{prok} for the derivatives at the point
$(\wx,\wx(t_0),\ov\alpha')$, we see that, for all $(\xi,\ov\alpha)\in \mathbb
R^n\times(L_\infty([t_0,t_1])^N$ ($\ov\alpha=(\alpha_1,\ldots,\alpha_N)$) and
$t\in[t_0,t_1]$, the equality is satisfied
\begin{multline*}
\wx'[\xi,\ov\alpha](t)-\int_{t_0}^t\left(\sum_{i=1}^k\widehat\alpha_i(\tau)\varphi_x(\tau,
\wx(\tau),\wu_i(\tau))\right)\wx'[\xi,\ov\alpha](\tau)\,d\tau\\=\xi+\int_{t_0}^t\left(\sum_{i=1}^N
\alpha_i(\tau)\varphi(\tau,\wx(\tau),u_i(\tau))\right)d\tau.
\end{multline*}
If we denote $h=\wx'[\xi,\ov\alpha]$, then this equality is equivalent to equation \eqref{s2}.
\end{proof}

We recall that the space $C_x^1(M\times\Sigma,\,Y)$  was defined before the statement
of the generalized implicit function theorem. The set $\mathcal A_k$, for any $k\in\mathbb N$,
and the tuple of controls $\ov u'=(u_1,\ldots, u_N)$ are defined before the formulation
of the lemma on equation in variations.

Let $L>0$. We denote by $Q_L=Q_L([t_0,t_1],\mathbb R^n)$ the class of Lipschitz
vector functions on $[t_0,t_1]$ with values in  $\mathbb R^n$ and with Lipschitz constant~$L$.

As in the previous lemma, the dependence of mappings $F$ and $F_s$ on the fixed
tuple $\ov u'$ is not indicated.

\begin{lemma}[\rm the first approximation lemma]\label{L3}
Let $M$ be a bounded set in $C([t_0,t_1],\,\mathbb R^n)$,  $\Omega$ be
a~bounded set in $\mathbb R^n$, and let $L>0$. Then the mapping $F\colon
C([t_0,t_1],\mathbb R^n)\times\mathbb R^n\times(L_\infty([t_0,t_1]))^N\to
C([t_0,t_1],\mathbb R^n)$, as defined for all $t\in[t_0,t_1]$ by the formula
\begin{equation*}
F(x,\xi,\ov\alpha)(t)=x(t)-\xi-\sum_{i=1}^N\int_{t_0}^t\alpha_i(\tau)\varphi(\tau,x(\tau),u_i(\tau))
\,d\tau,
\end{equation*}
where $\ov\alpha=(\alpha_1,\ldots,\alpha_N)$, lies in the space $C_x^1=C_x^1((M\cap
Q_L)\times\Omega\times\mathcal A_N,\,C([t_0,t_1],\mathbb R^n))$. Moreover, for any
$\ov\alpha\in\mathcal A_N$, there exists a~sequence of controls
$u_s(\ov\alpha,\ov u')\in L_\infty([t_0,t_1],\mathbb R^r)$, $s\in\mathbb N$, such that
the mappings $F_s\colon C([t_0,t_1],\mathbb R^n)\times\mathbb R^n\times\mathcal A_N\to
C([t_0,t_1],\mathbb R^n)$, as defined for all $t\in[t_0,t_1]$ by the rule
\begin{equation*}
F_s(x,\xi,\ov\alpha)(t)=x(t)-\xi-\int_{t_0}^t\varphi(\tau,x(\tau),u_s(\ov\alpha;\ov
u')(\tau)) \,d\tau,
\end{equation*}
also lie in  $C_x^1$, and besides, the sequence $F_s$ converges to~$F$ in the metric of~$C^1_x$ as $s\to\infty$.
\end{lemma}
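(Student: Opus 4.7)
The plan is to settle membership of $F$ and $F_s$ in $C_x^1$, then to construct the approximating controls $u_s$ by the classical chattering procedure, and finally to obtain convergence via a cancellation identity on each partition cell.

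First I would verify $F\in C_x^1((M\cap Q_L)\times\Omega\times\mathcal A_N,\,C([t_0,t_1],\mathbb R^n))$. The partial derivative $F_x$ is given explicitly by \eqref{prox} in the proof of Lemma~\ref{L2}. Because $M$ is bounded in $C([t_0,t_1],\mathbb R^n)$, the set $\{(\tau,x(\tau)):\tau\in[t_0,t_1],\,x\in M\}$ lies in a bounded subset of $\mathbb R\times\mathbb R^n$; combined with the essential boundedness of each $u_i$ and continuity of $\varphi$, $\varphi_x$, this yields pointwise bounds on $\varphi(\tau,x(\tau),u_i(\tau))$ and $\varphi_x(\tau,x(\tau),u_i(\tau))$ independent of $(x,\xi,\ov\alpha)$. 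Since $0\le\alpha_i\le 1$, these integrate to finite suprema of $\|F(x,\xi,\ov\alpha)\|_C$ and of the operator norm of $F_x(x,\xi,\ov\alpha)$, so $F\in C_x^1$; continuity in $x$ of $F$ and $F_x$ follows from dominated convergence and continuity of $\varphi$, $\varphi_x$.

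Next, for each $s\in\mathbb N$ I would define $u_s(\ov\alpha;\ov u')$ by chattering. Split $[t_0,t_1]$ into $s$ consecutive subintervals $I_j^s$ of equal length $\Delta_s=(t_1-t_0)/s$. On each $I_j^s$ set $\beta_{j,i}^s=\int_{I_j^s}\alpha_i(\tau)\,d\tau$, so that $\sum_{i=1}^N\beta_{j,i}^s=\Delta_s$, and cut $I_j^s$ into $N$ consecutive subintervals $I_{j,i}^s$ with $|I_{j,i}^s|=\beta_{j,i}^s$. Put $u_s(\ov\alpha;\ov u')(\tau)=u_i(\tau)$ for $\tau\in I_{j,i}^s$. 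Since each $u_i$ is essentially bounded, so is $u_s$, and $F_s\in C_x^1$ by the same bounds used for $F$. The heart of the argument is the convergence estimate. Writing $\psi_i(x;\tau)=\varphi(\tau,x(\tau),u_i(\tau))$, the identity $|I_{j,i}^s|=\int_{I_j^s}\alpha_i\,d\tau$ shows that replacing $\psi_i(x;\tau)$ by any constant $c_{i,j}$ in both $\int_{I_{j,i}^s}\psi_i\,d\tau$ and $\int_{I_j^s}\alpha_i\psi_i\,d\tau$ produces identical contributions that cancel. Choosing $c_{i,j}=\psi_i(x;\hat\tau_j)$ for some $\hat\tau_j\in I_j^s$ and using $|\alpha_i|\le 1$ yields
\begin{equation*}
\sup_{t\in[t_0,t_1]}\bigl|F_s(x,\xi,\ov\alpha)(t)-F(x,\xi,\ov\alpha)(t)\bigr|\le 2\sum_{i=1}^N\sum_j\int_{I_j^s}\bigl|\psi_i(x;\tau)-\psi_i(x;\hat\tau_j)\bigr|\,d\tau,
\end{equation*}
a bound independent of $\ov\alpha$ and $\xi$. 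The same argument with $\varphi_x$ in place of $\varphi$ handles the derivative term.

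The main obstacle is showing that the right-hand side above tends to zero uniformly in $x\in M\cap Q_L$. Since the $u_i$ are only essentially bounded, $\tau\mapsto\psi_i(x;\tau)$ need not be continuous. I would invoke Lusin's theorem: for any $\varepsilon>0$ choose continuous $\tilde u_i^\varepsilon$ agreeing with $u_i$ off a set $E_\varepsilon$ of measure at most $\varepsilon$. By Arzel\`a--Ascoli the bounded equi-Lipschitz family $M\cap Q_L$ is precompact in $C([t_0,t_1],\mathbb R^n)$, so the map $(\tau,x)\mapsto\varphi(\tau,x(\tau),\tilde u_i^\varepsilon(\tau))$ has a modulus of continuity in $\tau$ that is uniform over $x\in M\cap Q_L$; hence the Riemann-type sum restricted to $[t_0,t_1]\setminus E_\varepsilon$ is controlled by this modulus evaluated at $\Delta_s$, vanishing as $s\to\infty$ uniformly in $x$. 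The contribution on $E_\varepsilon$ is at most $2\varepsilon$ times the uniform bound on $\varphi$ (respectively $\varphi_x$). Sending $s\to\infty$ and then $\varepsilon\to 0$ gives the claimed $C_x^1$ convergence.
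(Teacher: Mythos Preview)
Your overall strategy matches the paper's: the same chattering construction of $u_s(\ov\alpha;\ov u')$, the same use of Lusin's theorem to replace the $u_i$ by continuous surrogates off a small set, and the same cell-by-cell cancellation exploiting $|I_{j,i}^s|=\int_{I_j^s}\alpha_i\,d\tau$. Where you subtract a constant $c_{i,j}=\psi_i(x;\hat\tau_j)$ the paper instead applies the integral mean value theorem componentwise; these are equivalent devices. Your appeal to Arzel\`a--Ascoli is unnecessary, since the direct Lipschitz bound $|x(\tau)-x(\tau')|\le L|\tau-\tau'|$ already gives the uniform modulus you need, and this is what the paper uses.

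There is, however, one genuine gap. The claim ``$F_s\in C_x^1$ by the same bounds used for $F$'' gives boundedness but not continuity, and the continuity of $F_s$ (and $F_{sx}$) in the variable $\ov\alpha$ is not analogous to that of $F$: for $F$ the dependence on $\ov\alpha$ is linear through the weights $\alpha_i(\tau)$, while for $F_s$ it enters implicitly through the partition endpoints, so perturbing $\ov\alpha$ changes the chattering control $u_s(\ov\alpha;\ov u')$ itself. The paper spends real effort here, proving that $\ov\alpha\mapsto u_s(\ov\alpha;\ov u')$ is continuous from $\mathcal A_N$ into $L_1([t_0,t_1],\mathbb R^r)$ (uniformly in $s$), and then passing to continuity of $F_s$ via a Chebyshev-type splitting of the domain of integration. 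You need to supply this step. A smaller point: in your final estimate you need $\hat\tau_j\notin E_\varepsilon$ for the modulus-of-continuity bound to apply, so $\hat\tau_j$ must be chosen in $I_j^s\setminus E_\varepsilon$ whenever that set is nonempty, with the (few) cells lying entirely in $E_\varepsilon$ handled by the crude bound $2C|I_j^s|$.
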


\begin{proof} \rm
Let us show that $F\in C_x^1$. By the previous lemma, the mapping $F$ is continuous together with its
partial derivative on $C([t_0,t_1],\mathbb R^n)\times\mathbb
R^n\times(L_\infty([t_0,t_1]))^N$.

Let us now check that the mapping~$F$ and its partial derivative with respect to~$x$ are bounded on the set
$(M\cap Q_L)\times\Omega\times\mathcal A_N$.

Indeed, let $\delta>0$ be such that $M\subset B_{C([t_0,t_1],\mathbb
R^n)}(0,\delta)$, $\Omega\subset B_{\mathbb R^n}(0,\delta)$ and $\gamma=\max_{1\le i\le
N}\|u_i\|_{L_\infty([t_0,t_1],\mathbb R^r)}$. The mappings $\varphi$ and $\varphi_x$
are continuous on the compact set $K=[t_0,t_1]\times B_{\mathbb R^n}(0,\delta)\times B_{\mathbb
R^r}(0,\gamma)$. We set $C=\max\{\,|\varphi(t,x,u)| : (t,x,u)\in K\,\}$ and
$C_0=\max\{\,\|\varphi_x(t,x,u)\| : (t,x,u)\in K\,\}$. Then, for any
$(x,\xi,\ov\alpha)\in(M\cap Q_L)\times\Omega\times\mathcal A_N$, $h\in
C([t_0,t_1],\mathbb R^n)$ and $t\in[t_0,t_1]$, it can be easily shown that
$|F(x,\xi,\ov\alpha)(t)|\le 2\delta+C$ and $|F_x(x,\xi,\ov\alpha)[h](t)|\le
(1+C_0)\|h\|_{C([t_0,t_1],\,\mathbb R^n)}$ (see formula~\eqref{prox}).

So, $F\in C_x^1$.

\smallskip

For each $s\in\mathbb N$ we split the interval $[t_0,t_1]$ into $s$ subintervals
$\Delta_j(s)=[t_0+j(t_1-t_0)/s, \ t_0+(j+1)(t_1-t_0)/s]$ of length
$|\Delta_j(s)|=(t_1-t_0)/s$, $j=0,\ldots,s-1$.

We set
\begin{equation}\label{msk}
\alpha_{ij}=\frac1{|\Delta_j(s)|}\int_{\Delta_j(s)}\alpha_i(t)\,dt,\quad
i=1,\ldots,N,\quad j=0,\ldots,s-1.
\end{equation}
It is clear that $\alpha_{ij}\ge0$ and $\sum_{i=1}^N\alpha_{ij}=1$, $j=0,\ldots,s-1$.

We split each subinterval $\Delta_j(s)$ into $N$ successive subintervals
$\Delta_{ji}(s,\ov\alpha)$ of length
$|\Delta_{ji}(s,\ov\alpha)|=\alpha_{ij}|\Delta_j(s)|=\alpha_{ij}(t_1-t_0)/s$,
$i=1,\ldots, N$.

Define the  function $u_s(\ov\alpha;\ov u')$ on $[t_0,t_1]$ by the rule: $u_s(\ov\alpha;\ov
u')(t)=u_i(t)$ if $t\in \Delta_{ji}(s,\ov\alpha)$, $1\le i\le N$, $j=0,1,\ldots,s-1$
(on the end-points of the subintervals the values of the functions $u_i$ and, respectively, of the function
$u_s(\ov\alpha;\ov u)$ can be taken arbitrarily). It is clear that $u_s(\ov\alpha;\ov u')\in
L_\infty([t_0,t_1],\mathbb R^r)$ and $\|u_s(\ov\alpha;\ov u')\|_{L_\infty([t_0,t_1],\mathbb
R^r)}\le\gamma$ for all $s\in\mathbb N$.

\smallskip

We claim that  $F_s\in C_x^1$ for any  $s\in\mathbb N$. We first show that, for any
$s\in\mathbb N$, the mapping $F_s$ is continuous.

To begin with, we note that the mappings $\ov\alpha\mapsto u_s(\ov\alpha;\ov u')$, \textit{qua}
mappings from $(L_\infty([t_0,t_1]))^N$ into $L_1([t_0,t_1],\mathbb R^r)$, are continuous on
$\mathcal A_N$ uniformly with respect to $s\in\mathbb N$. For simplicity of calculations, we shall check it
in the case $N=2$ and $t_0=0$,  $t_1=1$.

Let  ${\ov\alpha_0}=(\alpha_{10},\alpha_{20})\in\mathcal A_2$ be a~fixed pair.
Setting $\alpha_0=\alpha_{10}$, we have $1-\alpha_0=\alpha_{20}$. Next, let
$\beta_{j0}=|\Delta_j(s)|^{-1}\int_{\Delta_j(s)}\alpha_0(t)\,dt$, $j=0,\ldots,s-1$.
The subinterval $\Delta_j(s)$ of length $1/s$ is split into two successive subintervals
$\Delta_{j1}(s,{\ov\alpha_0})$ and $\Delta_{j2}(s,{\ov\alpha_0})$ of length, respectively,
$\beta_{j0}/s$ and $(1-\beta_{j0})/s$.

Further, let $\ov\alpha=(\alpha, 1-\alpha)$ be a different pair from~$\mathcal A_2$ and let
$\beta_j=|\Delta_j(s)|^{-1}\int_{\Delta_j(s)}\alpha(t)\,dt$, $j=0,\ldots,s-1$. Then on
each subinterval $\Delta_j(s)$ we have
\begin{multline*}
\int_{\Delta_j(s)}|u_s(\ov\alpha;\ov u')(t)-u_s({\ov\alpha_0};\ov u')(t)|\,dt=
\biggl|\int_{\beta_{j0}/s}^{\beta_j/s}|u_1(t)-u_2(t)|\,dt\biggr|\\\le
\frac{2\gamma}{s}|\beta_j-\beta_{j0}|=\frac{2\gamma}{s}\biggl|s\int_{\Delta_j(s)}\alpha(t)\,dt
-s\int_{\Delta_j(s)}\alpha_0(t)\,dt\biggr|\\\le2\gamma\int_{\Delta_j(s)}|\alpha(t)-\alpha_0(t)|\,dt
\end{multline*}
($\gamma=\max(\|u_1\|_{L_\infty([t_0,t_1],\,\mathbb
R^r)}, \|u_2\|_{L_\infty([t_0,t_1],\,\mathbb R^r)})$).
Summing these inequalities over $j=0,\ldots,s-1$, we find that
\begin{multline*}
\int_{0}^1|u_s(\ov\alpha;\ov u')(t)-u_s({\ov\alpha_0};\ov
u')(t)|\,dt\le2\gamma\int_{0}^1|\alpha(t)-\alpha_0(t)|\,dt\\\le
2\gamma\|\alpha-\alpha_0\|_{L_\infty([0,1])},
\end{multline*}
whence the required result follows.

After making this remark, we proceed with the proof of the continuity of the mappings~$F_s$.

Let $(x_0,\xi_0,\ov\alpha_0)\in C([t_0,t_1],\mathbb R^n)\times\mathbb R^n\times\mathcal
A_N$ and $\varepsilon>0$. We set $K_1=\{\,(t,x)\in\mathbb R^{n+1} : |x-
x_0(t)|\le\delta_1,\,\,t\in[t_0,t_1]\,\}\times B_{\mathbb R^r}(0,\gamma)$. The mapping
$\varphi$ is continuous on the compact set~$K_1$. Let $C_1=\max\{\,|\varphi(t,x,u)| : (t,x,u)\in
K_1\,\}$. Since~$\varphi$ is uniformly continuous on this compact set, there exists
$0<\delta_2\le\min(\delta_1,\varepsilon)$ such that
$|\varphi(t,x_1,u_1)-\varphi(t,x_2,u_2)|<\varepsilon$ for all $(t,x_i,u_i)\in K_1$,
$i=1,2$, for which $|x_1-x_2|<\delta_2$ and $|u_1-u_2|<\delta_2$.

By the above, there exists a~neighborhood $\wo(\ov\alpha_0)$ such that if $\ov\alpha\in
\wo(\ov\alpha_0)\cap\mathcal A_N$, then $u_s(\ov\alpha;\ov u')\in U_{L_1([t_0,t_1],\mathbb
R^r)}(u_s(\ov\alpha_0;\ov u'),\varepsilon\delta_2)$ for all $s\in\mathbb N$. For each
such $\ov\alpha$ and~$s$, we set $E_{\delta_2}(\ov\alpha,s)=\{\,t\in [t_0,t_1] :
|u_s(\ov\alpha;\ov u')(t)-u_s(\ov\alpha_0;\ov u')(t)|\ge\delta_2\,\}$. Then
\begin{multline*}
\delta_2{\rm
mes}\,E_{\delta_2}(\ov\alpha,s)\le\int_{E_{\delta_2}(\ov\alpha,s)}|u_s(\ov\alpha;\ov
u')(t)-u_s(\ov\alpha_0;\ov u')(t)|\,dt\\\le \|u_s(\ov\alpha;\ov u')-u_s(\ov\alpha_0;\ov
u')\|_{L_1([t_0,t_1],\mathbb R^r)}<\varepsilon\delta_2,
\end{multline*}
and hence, ${\rm mes}\,E_{\delta_2(\ov\alpha,s)}<\varepsilon$.

Now let  $x\in U_{C([t_0,t_1],\mathbb R^n)}(x_0,\delta_2)$, $\xi\in U_{\mathbb
R^n}(\xi_0,\varepsilon)$ and $\ov\alpha\in \wo(\ov\alpha_0)\cap\mathcal A_N$. For any
$t\in[t_0,t_1]$, we have
{\allowdisplaybreaks
\begin{multline*}
|F_s(x,\xi,\ov\alpha)(t)-F_s(x_0,\xi_0,\ov\alpha_0)(t)|=\biggl|x(t)-\xi -x_0(t)+\xi_0\\
-\int_{t_0}^t\varphi(\tau, x(\tau),u_s(\ov\alpha;\ov
u')(\tau))\,d\tau+\int_{t_0}^t\varphi(\tau, x_0(\tau),u_s(\ov\alpha_0;\ov
u')(\tau))\,d\tau\biggr|\\\le|x(t)-x_0(t)|+|\xi-\xi_0|+\int_{[t_0,t_1]\setminus
E_{\delta_2}(\ov\alpha,s)}|\varphi(\tau,x(\tau),u_s(\ov\alpha;\ov
u')(\tau))\\-\varphi(\tau, x_0(\tau),u_s(\ov\alpha_0;\ov
u')(\tau))|\,d\tau+\int_{E_{\delta_2}(\ov\alpha,s)}
|\varphi(\tau,x(\tau),u_s(\ov\alpha;\ov u')(\tau))\\- \varphi(\tau,
x_0(\tau),u_s(\ov\alpha_0;\ov
u')(\tau))|\,d\tau<2\varepsilon+\varepsilon(t_1-t_0)+\varepsilon2C_1,
\end{multline*}
}

\noindent that is, the mappings $F_s$ are continuous at the point $(x_0,\xi_0,\ov\alpha_0)$ uniformly with respect to~$s$, and
therefore, this is true for any point from $C([t_0,t_1],\mathbb R^n)\times\mathbb
R^n\times\mathcal A_N$.

For any $s$, the mapping $F_s$ has the partial derivative with respect to~$x$ at any point
$(x,\xi,\ov\alpha)\in C([t_0,t_1],\mathbb R^n)\times\mathbb R^n\times\mathcal A_N$,
which acts by the rule
\begin{equation*}
F_{sx}(x,\xi,{\ov\alpha};\ov u')[h](t)=h(t)-\int_{t_0}^t\varphi_x(\tau,
x(\tau),u_s(\ov\alpha;\ov u')(\tau))h(\tau)\,d\tau
\end{equation*}
for all $h\in C([t_0,t_1],\mathbb R^n)$ and $t\in[t_0,t_1]$.  This is proved along the same lines
as the existence of the partial derivative with respect to~$x$ of the mapping~$F$
in the lemma on equation in variations.

Let us show that this derivative is continuous on $C([t_0,t_1],\mathbb R^n)\times\mathbb
R^n\times\mathcal A_N$. In other words, we need to show that if $(x_0,\xi_0,\ov\alpha_0)\in
C([t_0,t_1],\mathbb R^n)\times\mathbb R^n\times\mathcal A_N$, then for any
$\varepsilon>0$ there exist neighborhoods $\wo_1(x_0)$,  $\wo_1(\xi_0)$ and
$\wo_1(\ov\alpha_0)$ such that, for all
$(x,\xi,\ov\alpha)\in\wo_1(x_0)\times\wo_1(\xi_0)\times (\wo_1(\ov\alpha_0)\cap\mathcal
A_N)$, all $h\in C([t_0,t_1],\mathbb R^n)$, $\|h\|_{C([t_0,t_1],\mathbb R^n)}\le1$ and
all $t\in[t_0,t_1]$,
\begin{multline*}
|F_{sx}(x, \xi,{\ov\alpha})[h](t)-F_{sx}(x_0,\xi_0, {\ov\alpha_0})[h](t)|
\\=\biggl|\int_{t_0}^t \varphi_x(\tau,x(\tau),u_s(\ov\alpha;\ov
u')(\tau))h(\tau)\,d\tau\\
-\int_{t_0}^t\varphi_x(\tau,x_0(\tau),u_s(\ov\alpha_0;\ov u')(\tau))
h(\tau)\,d\tau\biggr|<\varepsilon.
\end{multline*}
However, as is easy to check, to prove this inequality one needs in essence to repeat the above arguments
related to the continuity of the mapping~$F$.

The boundedness of the mappings  $F_s$ and their partial derivatives with respect to $x$, $s\in\mathbb N$, on
$(M\cap Q_L)\times\Omega\times\mathcal A_N$ can be proved as for the mapping~$F$. This implies that
$F_s\in C_x^1$ for all $s\in\mathbb N$.

Let us now show that the sequence $F_s$ converges to~$F$ in the metric of $C^1_x$ as
$s\to\infty$.

We first show that the sequence $F_s$ converges to~$F$ in $C((M\cap
Q_L)\times\Omega\times\mathcal A_N,\,C([t_0,t_1],\mathbb R^n))$ as $s\to\infty$. In other words,
we need to show that, for any $\varepsilon>0$, there exists $s_0=s_0(\varepsilon)$
such that, for all $s\ge s_0$, all $(x,\xi,\ov\alpha)\in(M\cap
Q_L)\times\Omega\times\mathcal A_N$, and all $t\in[t_0,t_1]$, the inequality holds
\begin{equation}\label{osm1}
\biggl|\int_{t_0}^t \varphi(\tau,x(\tau),u_s(\ov\alpha;\ov
u')(\tau))\,d\tau
-\sum_{i=1}^N\int_{t_0}^t\alpha_i(\tau)\varphi(\tau,x(\tau),u_i(\tau))
\,d\tau\biggr|<\varepsilon.
\end{equation}

Let $\varepsilon>0$. It can be assumed that  $0<\varepsilon<t_1-t_0$. From the
 Luzin $C$-property
and since the Lebesgue measure is regular, it follows that there exists a~closed set
$A=A(\varepsilon)\subset [t_0,t_1]$ such that ${\rm mes}\,A>(t_1-t_0)-\varepsilon$ and
that on~$A$
the functions $u_i$, $i=1,\ldots,N$, are continuous. Moreover, there exist continuous functions
$v_i$ on $[t_0,t_1]$ such that $v_i=u_i$ on~$A$ and $\|v_i\|_{C([t_0,t_1],\mathbb
R^r)}\le\|u_i\|_{L_\infty([t_0,t_1],\mathbb R^r)}$, $i=1,\ldots,N$.

Let $(x,\xi,\ov\alpha)\in(M\cap Q_L)\times\Omega\times\mathcal A_N$. On each subinterval
$\Delta_j(s)$, $0\le j\le s-1$, it is easily checked that
{\allowdisplaybreaks
\begin{multline}\label{nmix}
\int_{\Delta_j}\varphi(t,x(t),u_s(\ov\alpha;\ov
u')(t))\,dt-\sum_{i=1}^N\int_{\Delta_j}\alpha_i(t)\varphi(t,x(t),u_i(t))\,dt\\
=\sum_{i=1}^N\int_{\Delta_{ji}}\varphi(t,x(t),u_i(t))\,dt-\sum_{i=1}^N\int_{\Delta_j}\alpha_i(t)\varphi(t,x(t),u_i(t))\,dt
\\= \sum_{i=1}^N\int_{\Delta_{ji}\setminus
A}(\varphi(t,x(t),u_i(t))-\varphi(t,x(t),v_i(t)))\,dt\\-\sum_{i=1}^N\int_{\Delta_{j}\setminus
A}\alpha_i(t)(\varphi(t,x(t),u_i(t))-\varphi(t,x(t),v_i(t)))\,dt
+\\+\sum_{i=1}^N\int_{\Delta_{ji}}\varphi(t,x(t),v_i(t))\,dt-
\sum_{i=1}^N\int_{\Delta_{j}}\alpha_i(t)\varphi(t,x(t),v_i(t))\,dt
\end{multline}
}
(for brevity we replace $\Delta_j(s)$ and $\Delta_{ji}(s,\ov\alpha)$, respectively,
by $\Delta_j$ and~$\Delta_{ji}$)

Let us now estimate the expressions on the right. It is easily checked that the sum of the norms
of the first two terms on the right in~\eqref{nmix} is at most
\begin{equation}\label{nmix1}
2C\sum_{i=1}^N{\rm mes}\,(\Delta_{ji}\setminus A)+2C\int_{\Delta_{j}\setminus
A}\left(\sum_{i=1}^N\alpha_i(t)\right)\,dt=4C\,{\rm mes}\,(\Delta_{j}\setminus A),
\end{equation}
where the number~$C$ was defined at the beginning of the proof.

Now let us estimate the difference of the two last integrals in~\eqref{nmix}.  We first proceed with
each component of this difference.

Let
$\varphi(\cdot,x,v_i)=(\varphi_1(\cdot,x,v_i),\ldots,\varphi_n(\cdot,x,v_i))^T$,
$i=1,\ldots,N$. We fix $1\le l\le n$. By the mean value theorem for the integrals,
\begin{multline}\label{nmix3}
\left|\sum_{i=1}^N\int_{\Delta_{ji}}\varphi_l(t,x(t),v_i(t))\,dt-
\sum_{i=1}^N\int_{\Delta_j}\alpha_i(t)\varphi_l(t,x(t),v_i(t))\,dt\right|\\=
\biggl|\sum_{i=1}^N\varphi_l(\xi_{i},x(\xi_{i}),v_i(\xi_{i}))\alpha_{ij}|\Delta_j|
-\sum_{i=1}^N\varphi_l(\zeta_{i},x(\zeta_{i}),v_i(\zeta_{i}))\int_{\Delta_j}\alpha_i(t)\,dt
\biggr|\\\le|\Delta_j|\sum_{i=1}^N\alpha_{ij}|\varphi_l(\xi_{i},x(\xi_{i}),v_i(\xi_{i}))
-\varphi_l(\zeta_{i},x(\zeta_{i}),v_i(\zeta_{i}))|,
\end{multline}
where $\xi_{i},\zeta_{i}\in\Delta_j$, $1\le i\le N$.

Let us now estimate the absolute value of the difference on the right.
The mapping~$\varphi$ is uniformly continuous on the compact set~$K$ (which was defined
at the beginning of the proof). Hence, there exists  $\delta_0>0$ such that
$|\varphi_l(t',x',u')-\varphi_l(t'',x'',u'')|<\varepsilon$ for all $(t',x',u')$ and
$(t'',x'',u'')$ from~$K$ for which $|t'-t''|<\delta_0$, $|x'-x''|<\delta_0$ and
$|u'-u''|<\delta_0$.

Let $s_0=s_0(\varepsilon)$ be so large that
$|\Delta_j(s_0)|<\min(\varepsilon,\delta_0,\delta_0/L)$ and $|v_i(t')-v_i(t'')|<\delta_0$,
$i=1,\ldots,N$, for $t',t''\in\Delta_j(s_0)$. Hence, if $\xi_i, \zeta_i\in
\Delta_j(s_0)$, then $|\xi_i-\zeta_i|<\delta_0$, $|x(\xi_i)-x(\zeta_i)|\le
L|\xi_i-\zeta_i|<\delta_0$ and $|v_i(\xi_i)-v_i(\zeta_i)|<\delta_0$, $i=1,\ldots,N$.
Therefore, the expression on the right in~\eqref{nmix3} is majorized by
$|\Delta_j(s)|\,\varepsilon$ for $s\ge s_0$. This estimate implies that the norm of the difference of the
last two integrals in~\eqref{nmix} is estimated from above by
$|\Delta_j(s)|\sqrt{n}\,\varepsilon$.

Let us now prove inequality \eqref{osm1}. Assume that the interval $[t_0,t]$
contains a~noninteger number of subintervals $\Delta_j(s)$. If $t<t_0+(t_1-t_0)/s$, then we see that the
norm of the difference on the left in~\eqref{osm1}  is at most $2C(t-t_0)<
2C|\Delta_0(s))|<2C\varepsilon$.

Let $t\in(t_0,t_1]$ be such that the interval  $[t_0,t]$ contains an integer number of
subintervals $\Delta_j(s)$. Summing \eqref{nmix} with respect to all such subintervals, we get the
expression under the norm sign in~\eqref{osm1}.

In view of estimate \eqref{nmix1}, the sum of the norms of the first two integrals on the right (after addition)
is at most  $4C\,{\rm mes}\,([t_0,t]\setminus A)\le4C\,{\rm mes}\,([t_0,t_1]\setminus A)< 4C\varepsilon$.

The norm of the difference of the two last integrals in~\eqref{nmix} (after addition) is at most
$(t-t_0)\sqrt{n}\,\varepsilon\le(t_1-t_0)\sqrt{n}\,\varepsilon$.

So, for all $(x,\xi,\ov\alpha)\in (M\cap Q_L)\times\Omega\times\mathcal A_N$ and $s\ge
s_0(\varepsilon)$, the expression on the left in~\eqref{osm1} for the $t$~under consideration is
at most $(4C+(t_1-t_0)\sqrt{n})\varepsilon$.

The case when the interval $[t_0,t]$ is composed of an integer number of
subintervals $\Delta_j(s)$ and an additional interval of length~$\varepsilon$ can clearly be reduced to the above cases.

So, we have proved inequality \eqref{osm1}, but with $c\,\varepsilon$ in place of~$\varepsilon$,
which, however, is immaterial, because~$c$ does not depend on~$x$, on~$\ov\alpha$, and on~$t$, and hence
the sequence $F_s$ converges to~$F$ in $C((M\cap Q_L)\times\Omega\times\mathcal
A_N,\,C([t_0,t_1],\mathbb R^n))$ as $s\to\infty$.

It remains to show that the sequence $F_{sx}$ converges to~$F_x$ in $C((M\cap
Q_L)\times\Omega\times\mathcal A_N,\,C([t_0,t_1],\mathbb R^n))$ as $s\to\infty$. This means that,
for any $\varepsilon>0$, there exists $s_0=s_0(\varepsilon)$ such that
\begin{equation*}
\biggl|\int_{t_0}^t \varphi_x(\tau,x(\tau),u_s(\ov\alpha;\ov
u')(\tau))h(\tau)\,d\tau
-\sum_{i=1}^N\int_{t_0}^t\alpha_i(\tau)\varphi_x(\tau,x(\tau),u_i(\tau))
h(\tau)\,d\tau\biggr|<\varepsilon
\end{equation*}
for all  $s\ge s_0$, all $h\in C([t_0,t_1],\mathbb R^n)$, $\|h\|_{C([t_0,t_1],\mathbb
R^n)}\le1$, all $(x,\xi,\ov\alpha)\in(M\cap Q_L)\times\Omega\times\mathcal A_N$ and all
$t\in[t_0,t_1]$.
But it is easily seen that this inequality can be proved by the same line of arguments
as  in the proof of~\eqref{osm1}. This proves the first approximation lemma.
\end{proof}

We recall that the functions $u_s(\ov\alpha;\ov u')$ are defined in Lemma~\ref{L3},
and the space
$C(\mathcal M,\,Z)$ is defined before Corollary~\ref{s3}.

\begin{lemma}[\rm the second approximation lemma]\label{L4}
Let $\wx$ and $x(\cdot,\xi,\ov\alpha;\ov u')$, where $(\xi,\ov \alpha)\in
\wo(\wx(t_0))\times\wo(\ov\alpha')$,  are solutions to, respectively, equations \eqref{gor1} and
\eqref{gor2} from the lemma on equation in variations. Then there exist neighborhoods
$\wo_0(\wx(t_0))\subset\wo(\wx(t_0))$ and $\wo_0(\ov\alpha')\subset\wo(\ov\alpha')$ such that,
for all $(\xi,\ov\alpha)\in\mathcal M=
\wo_0(\wx(t_0))\times(\wo_0(\ov\alpha')\cap\mathcal A_N)$ and sufficiently large
$s\in\mathbb N$,
there exists a~unique solution $x_s(\cdot,\xi,\ov\alpha;\ov u')$
to the equation
\begin{equation}\label{difm}
\dot x=\varphi(t,x,u_s(\ov\alpha;\ov u')(t)),\quad x(t_0)=\xi,
\end{equation}
on $[t_0,t_1]$. Moreover, the mapping $(\xi,\ov\alpha)\mapsto x_s(\cdot,\xi,\ov\alpha;\ov
u')$ lies in the space $C(\mathcal M$, $C([t_0,t_1],\mathbb R^n))$ and converges in
this space to the mapping $(\xi,\ov\alpha)\mapsto x(\cdot,\xi,\ov\alpha;\ov u')$
as $s\to\infty$.
\end{lemma}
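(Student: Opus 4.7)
The plan is to realize both \eqref{difm} and the convex equation \eqref{gor2} as zero-sets of the mappings $F_s$ and $F$ from Lemma~\ref{L3} (with parameter space $\Sigma=\Omega\times\mathcal A_N$ for a small bounded neighborhood $\Omega$ of $\wx(t_0)$), and then apply the generalized implicit function theorem and Corollary~\ref{s3} to the $C^1_x$-convergent pair $F_s\to F$. In this setup, a zero of $F$ near $(\wx,\wx(t_0),\ov\alpha')$ is the solution $x(\cdot,\xi,\ov\alpha;\ov u')$ from Lemma~\ref{L2}, and a zero of $F_s$ is the desired $x_s(\cdot,\xi,\ov\alpha;\ov u')$; the quantitative estimate in Corollary~\ref{s3} then yields the uniform convergence.

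First I would fix the bounded arena. Since elements of $\mathcal A_N$ have sup-norm at most~$1$ and the components of $\ov u'$ are essentially bounded, $\varphi$ and $\varphi_x$ are bounded on a compact set containing all relevant arguments; Gronwall then bounds solutions of \eqref{gor2} and produces a uniform Lipschitz constant~$L$, so for a suitably chosen bounded neighborhood $M$ of $\wx$ in $X=C([t_0,t_1],\mathbb R^n)$ Lemma~\ref{L3} yields $F,F_s\in C^1_x((M\cap Q_L)\times\Omega\times\mathcal A_N, X)$ with $F_s\to F$ in this norm. Next I would apply the generalized implicit function theorem to $F$ at the point $(\wx,\wx(t_0),\ov\alpha')$ with $Q=X$ (so that the auxiliary inclusion $x-\wF_x^{-1}F(x,\sigma)\in Q$ is automatic). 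The equality $F(\wx,\wx(t_0),\ov\alpha')=0$ is just \eqref{gor1}, and invertibility of $F_x$ at this point was established in Lemma~\ref{L2}, reducing to the unique solvability, for every $g\in C([t_0,t_1],\mathbb R^n)$ and $\xi_0\in\mathbb R^n$, of
\begin{equation*}
\dot h=\sum_{i=1}^k\widehat\alpha_i(t)\varphi_x(t,\wx(t),\wu_i(t))\,h+g(t),\quad h(t_0)=\xi_0.
\end{equation*}

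The theorem, together with Corollary~\ref{s3}, then produces neighborhoods $V_0$ of $\wx$, $U_0'\subset U_0$ of $(\wx(t_0),\ov\alpha')$ and a $C^1_x$-neighborhood $W_0$ of $F$, and continuous maps $g_F,\,g_{F_s}\colon U_0\to V_0$ with $F(g_F(\sigma),\sigma)=0$ and $F_s(g_{F_s}(\sigma),\sigma)=0$ whenever $F_s\in W_0$. By Lemma~\ref{L3}, $F_s\in W_0$ for all sufficiently large~$s$; for such $s$ I would set $x_s(\cdot,\xi,\ov\alpha;\ov u'):=g_{F_s}(\xi,\ov\alpha)$, which by construction solves \eqref{difm}. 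Uniqueness in $V_0$ is the last clause of the theorem, and this extends to uniqueness on $[t_0,t_1]$ in $X$ by the standard local-Lipschitz uniqueness for ODEs with $\varphi$ of class $C^1$ in~$x$. Choosing $\mathcal M=\wo_0(\wx(t_0))\times(\wo_0(\ov\alpha')\cap\mathcal A_N)$ inside $U_0'$, the Corollary~\ref{s3} estimate
\begin{equation*}
\|g_{F_s}-g_F\|_{C(U_0',X)}\le 2\|F_x(\wx,\wx(t_0),\ov\alpha')^{-1}\|\cdot\|F_s-F\|_{C^1_x}\to 0
\end{equation*}
delivers the required convergence in $C(\mathcal M,C([t_0,t_1],\mathbb R^n))$. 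The main technical hurdle is the calibration of $M$, $\Omega$ and $L$ so that the neighborhoods supplied by the implicit function theorem are contained in the bounded arena on which the $C^1_x$-convergence of Lemma~\ref{L3} is available; once a simple \emph{a priori} sup- and Lipschitz-bound uniform in $(\xi,\ov\alpha)\in\Omega\times\mathcal A_N$ is in place, the rest is a mechanical application of the two preceding results.
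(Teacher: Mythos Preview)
Your overall architecture is right and matches the paper's: realize both equations as zero sets of $F$ and $F_s$ from Lemma~\ref{L3}, invoke the generalized implicit function theorem at $(\wx,\wx(t_0),\ov\alpha')$, and read off the uniform convergence from Corollary~\ref{s3}. The gap is in the choice $Q=X$.

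The $C^1_x$-convergence $F_s\to F$ supplied by Lemma~\ref{L3} is only on $(M\cap Q_L)\times\Omega\times\mathcal A_N$; the restriction to $Q_L$ is not cosmetic, it is what makes the averaging argument there work (the estimate after \eqref{nmix3} uses $|x(\xi_i)-x(\zeta_i)|\le L|\xi_i-\zeta_i|$). With $Q=X$, the neighborhood $W_0$ of $\wF$ in the implicit function theorem is a ball in $C^1_x((V\cap X)\times\Sigma,Y)=C^1_x(V\times\Sigma,Y)$, and the estimate \eqref{neq} in Corollary~\ref{s3} reads $\|g_{F_s}-g_F\|\le 2\|\Lambda^{-1}\|\,\|F_s-F\|_{C(V\times\Sigma,Y)}$. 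You therefore need $F_s\to F$ uniformly on an \emph{open} set $V\subset C([t_0,t_1],\mathbb R^n)$, which Lemma~\ref{L3} does not give; and you cannot ``calibrate'' $V$ to sit inside $M\cap Q_L$, because $Q_L$ has empty interior in $C([t_0,t_1],\mathbb R^n)$. The a~priori Gronwall/Lipschitz bound you mention controls only the \emph{solutions} $x(\cdot,\xi,\ov\alpha;\ov u')$, not the Newton iterates $x_n=x_{n-1}-\Lambda^{-1}F_s(x_{n-1},\sigma)$ produced inside the implicit function theorem, so it does not rescue the argument.

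The paper resolves this by taking $Q=Q_L$ (with a specific $L$ chosen from an a~priori bound \eqref{otc} on $\|x-\Lambda^{-1}F_s(x,\xi,\ov\alpha)\|_{C}$) and then verifying the inclusion hypothesis of the theorem directly: for $x\in V_0'\cap Q_L$ one writes $y=x-\Lambda^{-1}F_s(x,\xi,\ov\alpha)$, observes that $\Lambda y=\Lambda x-F_s(x,\xi,\ov\alpha)$ gives an explicit integral identity (see \eqref{lip0}), and reads off $|y(t')-y(t'')|\le L|t'-t''|$ from the bounds on $\varphi$ and $\varphi_x$. The same computation works for $F$ in place of $F_s$. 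This is precisely the ``calibration'' you flag as the main hurdle, but it is not a matter of shrinking neighborhoods; it is a structural check that each Newton step preserves membership in $Q_L$, and it is the one piece of genuine content beyond Lemmas~\ref{L2}--\ref{L3}.
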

\begin{proof} \rm

Here we employ Corollary~\ref{s3} to the above generalized implicit function theorem.
We first require some preliminary considerations.

Let $\delta>0$, $\gamma=\max_{1\le i\le N}\|u_i\|_{L_\infty([t_0,t_1],\mathbb R^r)}$ and
$K_0=\{\,(t,x)\in \mathbb R\times\mathbb R^n :
|x-\wx(t)|\le\delta,\,\,\,t\in[t_0,t_1]\,\}\times B_{\mathbb R^r}(0,\gamma)$. We set
$C_0=\max\{\,|\varphi(t,x,u)| : (t,x,u)\in K_0\,\}$ and $C_1=\max\{\,\|\varphi_x(t,x,u)\| \,:\,
(t,x,u)\in K_0\,\}$

Let $F$ and $F_s$, $s\in\mathbb N$,  be the mappings  from Lemma \ref{L3}. We set
$\Lambda=F_x(\wx,\wx(t_0),\ov\alpha')$. The operator $\Lambda$ is invertible (see Lemma~\ref{L2}).

Let $x\in U_{C([t_0,t_1],\mathbb R^n)}(\wx,\delta)$, $\xi\in U_{\mathbb
R^n}(\wx(t_0),\delta)$, $\ov\alpha\in\mathcal A_N$ and $s\in\mathbb N$. Then, for all
such  $x$, $\xi$, $\ov\alpha$, $s$ and $t\in[t_0,t_1]$, we have
\begin{multline}\label{otc}
|x(t)-(\Lambda^{-1}F_s(x,\xi,\ov\alpha))(t)|\le\delta+\|\wx\|_{C([t_0,t_1],\mathbb
R^n)}\\+ \|\Lambda^{-1}\|(\delta+\|\wx\|_{C([t_0,t_1],\mathbb R^n)}
+\delta+|\wx(t_0)|+(t_1-t_0)C_0).
\end{multline}
We denote by $D$ the constant on the right and define $L=C_1(D+\delta)+C_0$.

Recall that $Q_L$ is the class of Lipschitz vector functions on $[t_0,t_1]$ with values in~$\mathbb R^n$
and with Lipschitz constant~$L$. It is easily checked that $Q_L$ is a~convex
closed set in $C([t_0,t_1],\,\mathbb R^n)$.

By Lemma~\ref{L3} the mappings $F$ and $F_s$, $s\in\mathbb N$, are contained in the space $\widehat C^1_x=C^1_x((U_{C([t_0,t_1],\,\mathbb R^n)}(\wx,\delta)\cap
Q_L)\times U_{\mathbb R^n}(\wx(t_0),\delta)\times\mathcal A_N,\,C([t_0,t_1],\,\mathbb
R^n))$ and converge to~$F$ in this space $F_s$ as $s\to\infty$.

Now we can apply Corollary~\ref{s3} to the generalized implicit function theorem, in which
$X=Y=C([t_0,t_1],\,\mathbb R^n)$, $\Sigma=U_{\mathbb R^n}(\wx(t_0),\delta)\times
\mathcal A_N$, $\ws=(\wx(t_0), \ov\alpha')$, $\wx=\wx\cd$, $V=U_{C([t_0,t_1],\,\mathbb
R^n)}(\wx,\delta)$, $Q=Q_{L}$ and $\wF=F$.

From Lemma~\ref{L2} it follows that  $F(\wx,\wx(t_0),\ov\alpha')=0$, and moreover, the
operator $\Lambda=F_x(\wx,\wx(t_0),\ov\alpha')$ is invertible, as was noted above.

Let neighborhoods $V'_0\subset V_0\subset V$ of the point $\wx$, a~neighborhood $U_0\subset
U_{\mathbb R^n}(\wx(t_0),\delta)\times\mathcal A_N$ of the point $(\wx(t_0),\ov\alpha')$ and
a~neighborhood~$W_0$ of the mapping~$F$ be from the conclusion of the theorem.

Since the mappings  $F_s$ converge to~$F$ in the space $\widehat C^1_x$ as
$s\to\infty$, there exists $s_0$ such that $F_s\in W_0$ for all $s\ge s_0$.

Let us check that $x-\Lambda^{-1}F_s(x,\xi,\ov\alpha)\in Q_{L}$ for all
$(x,\xi,\ov\alpha)\in(V'_0\cap Q_{L})\times U_0$ and $s\in\mathbb N$. Indeed, if
$y=x-\Lambda^{-1}F_s(x,\xi,\ov\alpha)$, then $\Lambda y=\Lambda x-F_s(x,\xi,\ov\alpha)$,
or (by the definition of~$\Lambda$ and $F_s(x,\xi,\ov\alpha)$)
\begin{multline}\label{lip0}
y(t)-\int_{t_0}^t\left(\sum_{i=1}^k\widehat\alpha_i(\tau)\varphi_x(\tau,\wx(\tau),\wu_i(\tau))\right)y(\tau)\,d\tau
=x(t)\\-\int_{t_0}^t\left(\sum_{i=1}^k\widehat\alpha_i(\tau)\varphi_x(\tau,\wx(\tau),\wu_i(\tau))\right)x(\tau)\,d\tau
-x(t)+\xi\\+\int_{t_0}^t\varphi(\tau,x(\tau),u_s(\ov\alpha;\ov u')(\tau)) \,d\tau
\end{multline}
for all $t\in[t_0,t_1]$.

Therefore, since $\|y\|_{C([t_0,t_1],\,\mathbb R^n)}\le D$ (see \eqref{otc}),
$\|x\|_{C([t_0,t_1],\,\mathbb R^n)}\le \delta$, $\sum_{i=1}^k\widehat\alpha_i(\tau)=1$
for almost all $t\in[t_0,t_1]$ and $\|u_s(\ov\alpha;\ov u')\|_{L_\infty([t_0,t_1],\,\mathbb
R^r)}\le\gamma$ for all $s\in\mathbb N$ (see Lemma~\ref{3}), we see that, for any
$t',t''\in[t_0,t_1]$,
\begin{multline*}
|y(t')-y(t'')|\le\biggl|\int_{t'}^{t''}\sum_{i=1}^k\widehat\alpha_i(\tau)\|\varphi_x(\tau,\wx(\tau),\wu_i(\tau))\|
|y(\tau)|\,d\tau\biggr|\\+\biggl|\int_{t'}^{t''}\sum_{i=1}^k\widehat\alpha_i(\tau)\|\varphi_x(\tau,\wx(\tau),\wu_i(\tau))\|
|x(\tau)|\,d\tau\biggr|\\+\biggl|\int_{t'}^{t''}|\varphi(\tau,\wx(\tau),u_s(\ov\alpha;\ov
u')(\tau))|\,d\tau\biggr|\le(C_1D+C_1\delta+C_0)|t'-t''|=L|t'-t''|
\end{multline*}
and so, $x-\Lambda^{-1}F_s(x,\xi,\ov\alpha)\in Q_{L}$ for all
$(x,\xi,\ov\alpha)\in(V'_0\cap Q_{L})\times U_0$ and $s\in\mathbb N$.

The same argument shows that $x-\Lambda^{-1}\wF(x,\xi,\ov\alpha)\in Q_{L}$ for all
$(x,\xi,\ov\alpha)\in(V'_0\cap Q_{L})\times U_0$ (instead of the last integral on the right
in~\eqref{lip0}, we have the integral
$\int_{t_0}^t\left(\sum_{i=1}^N\alpha_i(\tau)\varphi(\tau,x(\tau),u_i(\tau))\right)d\tau$).

Hence by Corollary~\ref{s3}, for all $s\ge s_0$, there exist
continuous mappings $g_{F_s}\colon U_0\to V_0\cap Q_{L}$ and $g_F\colon U_0\to V_0\cap
Q_{L}$ such that  $F_s(g_{F_s}(\xi,\ov\alpha),\xi,\ov\alpha)(t)=0$ and
$F(g_F(\xi,\ov\alpha),\xi,\ov\alpha)(t)=0$ for all $(\xi,\ov\alpha)\in U_0$ and
$t\in[t_0,t_1]$.

This is equivalent to saying that, for all $(\xi,\ov\alpha)\in U_0$, the function
$g_{F_s}(\xi,\ov\alpha)$ is a~unique solution $x_s(\cdot,\xi,\ov\alpha;\ov u)$
to equation \eqref{difm} and the function $g_{\wF}(\xi,\ov\alpha)$ is a~unique solution
$x(\cdot,\xi,\ov\alpha;\ov u)$ to equation \eqref{gor2}, whose properties are described in Lemma~\ref{L2}.

Moreover, by Corollary \ref{s3} there exists a~neighborhood $U'_0\subset U_0$ of the point
$(\wx(t_0),\ov\alpha')$ (it can be assumed that this neighborhood has the form $\mathcal
M=\wo_0(\wx(t_0))\times(\wo_0(\ov\alpha')\cap\mathcal A_N)$, where
$\wo_0(\wx(t_0))\subset\wo(\wx(t_0))$, $\wo_0(\ov\alpha')\subset \wo(\ov\alpha')$, and
$\wo(\wx(t_0))$ and $\wo(\ov\alpha')$ are the neighborhoods from Lemma \ref{L2}) such that
\begin{equation*}
\|x_s-x\|_{C(\mathcal M,\, C([t_0,t_1],\,\mathbb
R^n))}\le2\|\Lambda^{-1}\|\|F_s-F\|_{C((V\cap Q_{L})\times\Sigma,\, C([t_0,t_1],\,\mathbb
R^n))},
\end{equation*}
where $x_s$ and $x$ are, respectively, the continuous mappings $(\xi,\ov\alpha)\mapsto
x_s(\cdot,\xi,\ov\alpha;\ov u')$ and $(\xi,\ov\alpha)\mapsto x(\cdot,\xi,\ov\alpha;\ov u')$,
and $\Lambda=F_x(\wx,\wx(t_0),\ov\alpha')$.

The quantity on the right tends to zero as $s\to\infty$, and hence, $x_s\to x$ as
$s\to\infty$ in the metric of the space $C(\mathcal M,\, C([t_0,t_1],\,\mathbb R^n))$.
\end{proof}

\end{document}